\newtheorem{thm}{Theorem}[section]
\newtheorem{cor}[thm]{Corollary}
\newtheorem{lem}[thm]{Lemma}
\newtheorem{prop}[thm]{Proposition}
\newtheorem{dfn}[thm]{Definition}
\theoremstyle{definition}
\newcommand{\R}{\mathbb{R}}
\newcommand{\N}{\mathbb{N}}
\newcommand{\Z}{\mathbb{Z}}
\renewcommand{\Gamma}{\varGamma}
\renewcommand{\epsilon}{\varepsilon}
\renewcommand{\leq}{\leqslant}
\renewcommand{\geq}{\geqslant}
\newcommand{\txtbf}[1]{\textnormal{\textbf{#1}}}
\newcommand{\BL}{\textnormal{BL}}
\newcommand{\BLg}{\textnormal{BL}_{\textnormal{g}}}
\newcommand{\supp}{\textnormal{supp}}
\newcommand{\adj}{\textnormal{adj}}
\newcommand{\dist}{\textnormal{dist}}
\begin{document}
\title{A nonlinear Variant of Ball's Inequality}
\author{Jennifer Duncan}
\date{18 May, 2022}
\maketitle

\begin{abstract}
We adapt an induction-on-scales argument of Bennett, Bez, Buschenhenke, Cowling, and Flock to establish a global near-monotonicity statement for the nonlinear Brascamp--Lieb functional under a certain heat-flow, from which follows a global stability result for nonlinear Brascamp--Lieb inequalities under bounded perturbations.
\end{abstract}
\section{Introduction}
\subsection{Linear Brascamp--Lieb Inequalities}\label{intro}
For each $j\in\{1,...,m\}$, let $L_j:\mathbb{R}^n\rightarrow\mathbb{R}^{n_j}$ be a linear surjection and $p_j\in[0,1]$. The \emph{Brascamp--Lieb inequality} associated with the pair $(\txtbf{L},\txtbf{p}):=((L_j)_{j=1}^m,(p_j)_{j=1}^m)$ is the following:
\begin{align}
\int_{\mathbb{R}^n}\prod_{j=1}^m(f_j\circ L_j)^{p_j}\leq C\prod_{j=1}^m\left(\int_{\mathbb{R}^{n_j}}f_j\right)^{p_j}\hspace{5pt}\forall f_j\in L^1(\mathbb{R}^{n_j}), f_j\geq 0.
\label{eq:Bras1}
\end{align}
Using the notation of \cite{bcct}, we refer to the pair $(\txtbf{L},\txtbf{p})$ as a \emph{Brascamp--Lieb datum}. We define the \emph{Brascamp--Lieb constant}, BL(\txtbf{L},\txtbf{p}), to be the infimum over all constants $C\in(0,\infty]$ for which the above inequality holds. We define the \emph{Brascamp--Lieb functional} as 
\begin{align}
    \BL(\txtbf{L},\txtbf{p};\txtbf{f}):=\frac{\int_{\mathbb{R}^n}\prod_{j=1}^m(f_j\circ L_j)^{p_j}}{\prod_{j=1}^m\left(\int_{\mathbb{R}^{n_j}}f_j\right)^{p_j}},
\end{align}
for all $m$-tuples of non-zero non-negative functions $\txtbf{f}=(f_j)_{j=1}^m\in\prod_{j=1}^mL^1(\R^{n_j})$. We may hence write the Brascamp--Lieb constant as $\BL(\txtbf{L},\txtbf{p})=\sup_{\txtbf{f}}\BL(\txtbf{L},\txtbf{p};\txtbf{f})$.\par
The Brascamp--Lieb inequalities are a natural generalisation of many classical multilinear inequalities that commonly arise in analysis, examples of which include H\"older's inequality, Young's convolution inequality, and the Loomis--Whitney inequality. They have had a significant impact on a broad range of areas of mathematics and the sciences---it was developments in the study of Brascamp--Lieb inequalities that lead to the resolution of the century-old Vinagradov mean value conjecture \cite{bourgain2016proof}, which is now a celebrated theorem in analytic number theory. Other deep number-theoretic connections were established by Christ et al. \cite{christ2015holder}, who proven that the algorithmic construction of the set of Brascamp--Lieb data whose associated constant is finite is equivalent to the affirmative solution of Hilbert's tenth problem for rational polynomials. It should also be noted that Gowers norms, which have become an object of great interest in additive combinatorics \cite{green2004finite,tao2012higher,bloom2020quantitative}, may be estimated from above via a suitable discrete version of a Brascamp--Lieb inequality. Furthermore, the Brascamp--Lieb inequalities have been found to arise in convex geometry as generalisations of Brunn--Minkowski type inequalities \cite{revbras}, in the study of entropy inequalities for many-body systems of particles \cite{carlen2004sharp}, and have been used as a framework for finding effective solution algorithms for a broad class of optimisation problems arising in computer science \cite{garg2018algorithmic}.

The most immediate question in the theory of linear Brascamp--Lieb inequalities is of course that of finding the necessary and sufficient conditions for BL(\txtbf{L},\txtbf{p}) to be finite. We begin with the observation that, by an elementary scaling argument, the following is a necessary condition for finiteness:
\begin{align}
\sum_{j=1}^mp_jn_j=n.\label{eq:scal}
\end{align}
It was first proven by Barthe, later reproven by Carlen, Lieb, and Loss in \cite{carlen2004sharp}, that this condition together with a spanning condition on the surjections $L_j$ forms a necessary and sufficient condition for finiteness in the rank-one case, i.e. when $n_j=1$ for all $j\in\{1,...,m\}$ \cite{revbras}. The necessary and sufficient conditions for the general case were later established by Bennett, Carbery, Christ, and Tao in two papers via two different methods \cite{bennett2005finite, bcct}, although, in \cite{bcct}, by studying extensively the role of gaussians in the problem, they also obtain additional information about the extremisability of Brascamp--Lieb inequalities.\par
As we shall discuss later on, it can be shown that if a Brascamp--Lieb inequality admits an extremiser, then it must admit a gaussian extremiser, a result that is related to the following theorem due to Lieb.
\begin{thm}[Lieb's Theorem \cite{lieb1990gaussian}]\label{lieb}
Given any Brascamp--Lieb datum $(\txtbf{L},\txtbf{p})$, the set of centred gaussians $\mathcal{G}$,
\begin{align*}
    \mathcal{G}:=\left\lbrace(G_j)_{j=1}^m: G_j(x):=\exp(-\pi\langle A_jx,x\rangle), A_j\in\mathbb{R}^{n_j\times n_j} \textnormal{ is positive definite }\forall j\in\{1,...,m\}\right\rbrace
\end{align*}
exhausts the associated Brascamp--Lieb inequality, that is to say $\sup_{\txtbf{G}\in \mathcal{G}}\BL(\txtbf{L},\txtbf{p};\txtbf{G})=\BL(\txtbf{L},\txtbf{p})$.
\end{thm}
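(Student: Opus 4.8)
The plan is to follow the heat-flow proof of Lieb's theorem, due in general to \cite{bcct} (building on the rank-one argument of Carlen, Lieb, and Loss), which is the linear ancestor of the monotonicity method adapted in this paper. First I would dispose of the degenerate cases: if the scaling condition \eqref{eq:scal} fails, or more generally if $\BL(\txtbf{L},\txtbf{p})=\infty$, then the supremum over $\mathcal{G}$ is already infinite --- rescale a fixed tuple of centred gaussians and let their covariances degenerate appropriately --- so the identity is vacuous. Assume then $\BL(\txtbf{L},\txtbf{p})<\infty$. Applying an invertible linear map to each $\R^{n_j}$ and one to $\R^n$ multiplies both $\BL(\txtbf{L},\txtbf{p};\txtbf{f})$ and $\BL(\txtbf{L},\txtbf{p};\txtbf{G})$ by the \emph{same} Jacobian factor, so by a standard reduction I may assume the datum is in \emph{geometric position}: $L_jL_j^\ast=I_{n_j}$ for all $j$ and $\sum_{j=1}^m p_jL_j^\ast L_j=I_n$. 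An elementary computation with determinants then shows $\sup_{\txtbf{G}\in\mathcal{G}}\BL(\txtbf{L},\txtbf{p};\txtbf{G})=1$ for such data, attained at $G_j(x)=e^{-\pi|x|^2}$; so the theorem reduces to showing $\BL(\txtbf{L},\txtbf{p})\leq 1$ for data in geometric position.

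To this end I fix nonnegative $f_j\in L^1(\R^{n_j})$ with $\int f_j=1$ and let $f_{j,t}=f_j\ast\gamma_t$ evolve by the heat equation $\partial_tf_{j,t}=\tfrac12\Delta f_{j,t}$, where $\gamma_t$ is the gaussian heat kernel of variance $t$. Heat flow preserves mass, so $\prod_j(\int f_{j,t})^{p_j}\equiv 1$ and it is enough to study $Q(t):=\int_{\R^n}\prod_{j=1}^m(f_{j,t}\circ L_j)^{p_j}$, which satisfies $Q(0)=\BL(\txtbf{L},\txtbf{p};\txtbf{f})$. The easy endpoint is $t\to\infty$: since $\int f_j=1$ forces $t^{n_j/2}f_{j,t}(\sqrt t\,\cdot)\to\gamma_1$, after the rescaling $x=\sqrt t\,y$ the integrand of $Q(t)$ --- Jacobian included --- converges to $(2\pi)^{-n/2}e^{-|y|^2/2}$, with \eqref{eq:scal} accounting for the power of $t$ and the geometric condition $\sum_j p_jL_j^\ast L_j=I_n$ for the exponent; hence $Q(t)\to 1$ by dominated convergence.

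The crux is the monotonicity $Q'\geq 0$. I would differentiate under the integral, substitute $\partial_tf_{j,t}=\tfrac12\Delta f_{j,t}$, apply the identity $\Delta f/f=\Delta\log f+|\nabla\log f|^2$, and integrate by parts in the term carrying $\Delta\log f_{j,t}$. Writing $v_j(x):=(\nabla\log f_{j,t})(L_jx)\in\R^{n_j}$, and using $L_jL_j^\ast=I_{n_j}$ to replace $\Delta_x\big(\log f_{j,t}(L_jx)\big)$ by $(\Delta\log f_{j,t})(L_jx)$, the computation should collapse to
\[ Q'(t)=\tfrac12\int_{\R^n}\Big(\sum_{j=1}^m p_j\,|v_j(x)|^2-\Big|\sum_{j=1}^m p_j\,L_j^\ast v_j(x)\Big|^2\Big)\prod_{j=1}^m f_{j,t}(L_jx)^{p_j}\,dx . \]
The bracket is pointwise nonnegative: putting $w:=\sum_j p_jL_j^\ast v_j$ we have $|w|^2=\sum_j p_j\langle L_jw,v_j\rangle\leq\big(\sum_j p_j|L_jw|^2\big)^{1/2}\big(\sum_j p_j|v_j|^2\big)^{1/2}$ by Cauchy--Schwarz, while $\sum_j p_j|L_jw|^2=\langle(\sum_j p_jL_j^\ast L_j)w,w\rangle=|w|^2$ by the geometric condition, so $|w|^2\leq\sum_j p_j|v_j|^2$. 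Thus $Q'\geq 0$, and therefore $\BL(\txtbf{L},\txtbf{p};\txtbf{f})=Q(0)\leq\lim_{t\to\infty}Q(t)=1$. Taking the supremum over $\txtbf{f}$ gives $\BL(\txtbf{L},\txtbf{p})\leq 1$, which with the trivial inequality $\sup_{\txtbf{G}\in\mathcal{G}}\BL(\txtbf{L},\txtbf{p};\txtbf{G})\leq\BL(\txtbf{L},\txtbf{p})$ and the value $1$ from the first paragraph yields equality; undoing the linear reductions completes the proof.

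The main obstacle I foresee is making the differentiation and integration by parts rigorous: they require enough regularity, positivity, and decay of $f_{j,t}$ and $\nabla\log f_{j,t}$ to differentiate under the integral sign and to discard the boundary terms at infinity. The standard fix is to prove $\BL(\txtbf{L},\txtbf{p};\txtbf{f})\leq 1$ first for a dense class of well-behaved inputs --- e.g.\ $f_j$ bounded, compactly supported, and bounded below on their supports, for which the heat flow is instantly smooth and $\nabla\log f_{j,t}$ is adequately controlled --- and then to recover the general case by monotone or dominated convergence. Everything else (the two geometric-position identities and the Cauchy--Schwarz step) is routine once the formula for $Q'(t)$ is in hand. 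Lieb's original rearrangement argument, and a tensor-power argument invoking the central limit theorem in place of the heat flow, give alternative routes.
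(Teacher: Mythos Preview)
The paper does not prove Theorem~\ref{lieb}; it is quoted with a citation to \cite{Lieb} and used as background, so there is no in-paper proof to compare against.

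That said, your main line of argument has a genuine gap at the reduction step. You write ``by a standard reduction I may assume the datum is in geometric position,'' but being reducible to geometric form via linear changes of variables on $\R^n$ and on each $\R^{n_j}$ is \emph{equivalent} to the datum being gaussian-extremisable. Concretely, writing $B=(C^\ast C)^{-1}$ and $A_j=C_j^\ast C_j$, your two geometric conditions force $A_j=(L_jBL_j^\ast)^{-1}$ and $\sum_j p_jL_j^\ast A_jL_j=B^{-1}$, which is precisely the Euler--Lagrange system characterising gaussian extremisers in \cite{bcct}; this system need not admit a solution even when $\BL(\txtbf{L},\txtbf{p})<\infty$ (the paper itself stresses, after Theorem~\ref{bcct}, that extremisability is a strictly stronger hypothesis than finiteness). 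Your Cauchy--Schwarz step for $Q'(t)\geq 0$ genuinely uses $\sum_j p_jL_j^\ast L_j=I_n$, so without that identity the isotropic heat-flow monotonicity is not available. As written, your argument therefore proves Lieb's theorem only for gaussian-extremisable data, and in fact would then prove the stronger (and in general false) statement that the supremum over $\mathcal{G}$ is attained.

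The alternatives you list in your final sentence are the correct repairs. Lieb's original route avoids extremisability entirely; the \cite{bcct} heat-flow route first proves the structural decomposition of a feasible datum into simple factors (each of which \emph{is} extremisable, hence geometrisable) and runs the monotonicity on each factor; and the Ball's-inequality route, which is the linear ancestor of what this paper does, iterates convolution with $\delta$-near gaussian extremisers so that no exact extremiser is ever needed. Any of these would close the gap, but none is the ``standard reduction'' you invoke.
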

For the remainder of this paper we shall assume that all gaussians are centred. Theorem \ref{lieb} is a very deep and powerful result, since it often allows one to impose without loss of generality that the functions being considered are gaussians, thereby gaining a great deal of structural information. Indeed, it is using this result that necessary and sufficient conditions for both the finiteness and extremisability of Brascamp--Lieb inequalities were proven by Bennett, Carbery, Christ, and Tao in \cite{bcct}. Moreover, in the same paper they establish necessary and sufficient conditions for when such a gaussian extremiser is unique, up to certain translation and scaling invariances we shall soon define. Before we give a statement of their theorem, we shall need to state some preliminary definitions.
\begin{dfn}
Let $(\txtbf{L},\txtbf{p})$ be a Brascamp--Lieb datum. We say that the datum $(\txtbf{L},\txtbf{p})$ is \emph{feasible} if it satisfies the scaling condition (\ref{eq:scal}), and that for all subspaces $V\leq\mathbb{R}^n$,
\begin{align}
\dim(V)\leq\sum_{j=1}^mp_j\dim(L_jV).
\label{eq:feas}
\end{align}
\end{dfn}
We shall use the notation $\mathcal{F}$ to denote the set of $m$-tuples $\txtbf{L}=(L_j)_{j=1}^m$ for which the datum $(\txtbf{L},\txtbf{p})$ is feasible. Note that we have suppressed the implicit dependence of $\mathcal{F}$ on $\txtbf{p}, n, n_1,...,n_m$ due to these objects being essentially fixed, a convention that we shall continue throughout this paper.
\begin{dfn}
 Given $(\txtbf{L},\txtbf{p})$, we say that a proper non-trivial subspace $V\leq\mathbb{R}^n$ is \emph{critical} if it satisfies (\ref{eq:feas}) with equality, and that the datum $(\txtbf{L},\txtbf{p})$ is \textnormal{simple} if it admits no critical subspaces.
\end{dfn}
 Similarly, let $\mathcal{S}$ denote the set of $m$-tuples $\txtbf{L}=(L_j)_{j=1}^m$ such that $(\txtbf{L},\txtbf{p})$ is simple. The significance of critical subspaces is that, if we were to restrict the domains of the surjections $L_j$ to a critical subspace $V$, and their codomains to $L_jV$, then we would obtain a restricted datum that is itself feasible. Moreover, in the case that $(\txtbf{L},\txtbf{p})$ is extremisable, the orthogonal complement of a critical subspace is itself critical \cite{bcct}. As a result, Brascamp--Lieb data can exhibit a certain splitting phenomenon, where they may be decomposed along critical subspaces. In this sense, analogously to the role that simple groups play in group theory, simple Brascamp--Lieb data may be viewed as algebraically fundamental objects in the theory of Brascamp--Lieb inequalities. An in-depth discussion of such structural considerations can be found in \cite{bcct} and \cite{vldm}, for example.
\begin{thm}[Bennett, Carbery, Christ, Tao (2007)\cite{bcct}]\label{bcct}
A Brascamp--Lieb datum $(\txtbf{L},\txtbf{p})$ is finite if and only if it is feasible, and is gaussian-extremisable if it is simple. Moreover, gaussian extremisers for simple data are unique up to translation and scaling invariances.
\end{thm}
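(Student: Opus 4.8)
The plan is to reduce the whole statement to a finite-dimensional optimisation over Gaussians via Theorem~\ref{lieb}, and then to analyse that optimisation by induction on $n$. We may assume $p_j>0$ for each $j$. Writing $M(\txtbf{A}):=\sum_{j=1}^mp_jL_j^*A_jL_j$ for positive definite $A_j\in\R^{n_j\times n_j}$, a direct Gaussian integral computation shows that for $\txtbf{G}=(G_j)_{j=1}^m\in\mathcal{G}$ with $G_j(x)=\exp(-\pi\langle A_jx,x\rangle)$,
\[
\BL(\txtbf{L},\txtbf{p};\txtbf{G})^2=\frac{\prod_{j=1}^m(\det A_j)^{p_j}}{\det M(\txtbf{A})},
\]
which is meaningful because applying (\ref{eq:feas}) to $V=\bigcap_{j}\ker L_j$ forces $V=\{0\}$, so $M(\txtbf{A})$ is positive definite. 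By Theorem~\ref{lieb}, $\BL(\txtbf{L},\txtbf{p})<\infty$ if and only if $F(\txtbf{A}):=\sum_jp_j\log\det A_j-\log\det M(\txtbf{A})$ is bounded above on the positive-definite cone, and the datum is Gaussian-extremisable precisely when this supremum is attained. Now $F(t\txtbf{A})=\big(\sum_jp_jn_j-n\big)\log t+F(\txtbf{A})$, which is unbounded as $t\to0^+$ or $t\to\infty$ unless (\ref{eq:scal}) holds, so (\ref{eq:scal}) is necessary. For the necessity of (\ref{eq:feas}), let $V$ be of maximal dimension among subspaces for which (\ref{eq:feas}) fails and test on Gaussians adapted to $V$ by taking $A_j=\tau\,\Pi_{L_jV}+\tau^{-1}\Pi_{(L_jV)^\perp}$, where $\Pi$ denotes orthogonal projection; maximality of $V$ ensures that no non-zero $w\perp V$ has $L_jw\in L_jV$ for all $j$, so a block-determinant estimate gives $\det M(\txtbf{A})\asymp\tau^{2\dim V-n}$ and hence $\BL(\txtbf{L},\txtbf{p})^2\gtrsim\tau^{2(\sum_jp_j\dim(L_jV)-\dim V)}$, which is unbounded as $\tau\to0^+$ exactly when (\ref{eq:feas}) fails for $V$.

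For the sufficiency of feasibility I would first treat the case where $(\txtbf{L},\txtbf{p})$ is simple and show that the supremum of $F$ is attained, hence finite. Take a maximising sequence $(\txtbf{A}^{(k)})$ normalised by $\det M(\txtbf{A}^{(k)})=1$. The technical heart of the argument is the claim that simplicity forces the $A_j^{(k)}$ to remain in a fixed compact subset of the positive-definite cone: if some eigenvalue of some $A_j^{(k)}$ tended to $0$ or to $\infty$ along a subsequence, then a careful bookkeeping of the induced block decompositions of the $A_j^{(k)}$ and of $M(\txtbf{A}^{(k)})$ would single out, in the limit, a proper non-trivial subspace $V\leq\R^n$ along which the ratio of determinants defining $F$ can stay bounded only if (\ref{eq:feas}) holds for $V$ with equality, i.e. only if $V$ is critical --- contradicting simplicity. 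This is essentially the necessity computation of the previous paragraph run in reverse. Granting compactness, a subsequential limit $\txtbf{A}^*$ has each $A_j^*$ positive definite, the corresponding centred Gaussian tuple $\txtbf{G}^*$ lies in $\mathcal{G}$, and $\BL(\txtbf{L},\txtbf{p})=\BL(\txtbf{L},\txtbf{p};\txtbf{G}^*)<\infty$ with $\txtbf{G}^*$ extremal.

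For a general feasible datum that is not simple I would induct on $n$, the case $n=0$ being trivial. Choose a critical subspace $V$; as recalled above, $V^\perp$ is also critical, and a short computation with criticality of $V$ shows that both the restricted datum $(L_j|_V:V\to L_jV)_{j=1}^m$ and the quotient datum $(\Pi_{(L_jV)^\perp}L_j|_{V^\perp}:V^\perp\to(L_jV)^\perp)_{j=1}^m$, each carrying the weights $\txtbf{p}$, are feasible and live in dimension strictly less than $n$. Decomposing $\R^n=V\oplus V^\perp$, slicing the integral in (\ref{eq:Bras1}) along cosets of $V$ and applying the inequality for the restricted datum on each slice, and then applying the inequality for the quotient datum in the transverse variable, yields the submultiplicativity bound $\BL(\txtbf{L},\txtbf{p})\leq\BL\big((L_j|_V)_j,\txtbf{p}\big)\,\BL\big((\Pi_{(L_jV)^\perp}L_j|_{V^\perp})_j,\txtbf{p}\big)$, both factors of which are finite by the inductive hypothesis. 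Together with the previous two paragraphs, this proves that $\BL(\txtbf{L},\txtbf{p})$ is finite if and only if the datum is feasible.

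Finally, uniqueness of the Gaussian extremiser for simple data. I would compute the second variation of $F$ at an extremiser $\txtbf{A}^*$ --- conveniently after the linear change of variables bringing $\txtbf{A}^*$ to the identity and the datum to a geometric realisation, where $\sum_jp_jL_j^*L_j=I$ and, by the Euler--Lagrange relations $A_j^{-1}=L_jM(\txtbf{A})^{-1}L_j^*$, each $L_j$ is a co-isometry --- and show that it is a negative semidefinite quadratic form in the perturbations $(\delta A_j)$ whose null space is exactly the line spanned by the scaling direction; for general feasible data any further null direction must be supported on a critical subspace, of which simple data have none. Hence extremisers of $F$ are isolated modulo the scaling symmetry, and since the set of extremisers is non-empty and, by the compactness argument above, compact modulo scaling, it is a single scaling orbit. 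Unwinding Theorem~\ref{lieb}, the Gaussian extremiser of a simple datum is unique up to rescaling, and up to translation once the centring normalisation is dropped. The step I expect to be the main obstacle is the compactness/degeneration analysis of the simple case: extracting a genuine critical subspace from a degenerating maximising sequence requires delicate uniform control of the competing determinants, and it is exactly here that simplicity is used in an essential way.
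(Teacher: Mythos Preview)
The paper does not prove this theorem: Theorem~\ref{bcct} is quoted from \cite{bcct} as background and is used throughout without any argument supplied here, so there is no ``paper's own proof'' to compare your proposal against.

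For what it is worth, your outline is essentially the strategy of the original paper \cite{bcct}: reduce to Gaussians via Theorem~\ref{lieb}, obtain necessity of (\ref{eq:scal}) and (\ref{eq:feas}) by degenerating Gaussian inputs along a violating subspace, obtain Gaussian-extremisability for simple data by a compactness argument on the positive-definite cone (where degeneration of a maximising sequence is shown to produce a critical subspace), deduce finiteness for general feasible data by induction on $n$ via splitting along a critical subspace, and read off uniqueness for simple data from the Hessian of $F$ after passing to geometric coordinates. The step you yourself flag --- extracting a genuine critical subspace from a degenerating maximising sequence --- is indeed where the work lies in \cite{bcct}, and your necessity computation with $A_j=\tau\,\Pi_{L_jV}+\tau^{-1}\Pi_{(L_jV)^\perp}$ would also need more care to justify the asserted block-determinant asymptotic $\det M(\txtbf{A})\asymp\tau^{2\dim V-n}$; but none of this bears on the present paper, which simply cites the result.
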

To clarify, by `translation and scaling invariance' we are referring to the fact that the Brascamp--Lieb functional $\BL(\txtbf{L},\txtbf{p};\txtbf{f})$ is invariant under the following group actions
$$f_j\mapsto f_j(\cdot - v_j)\quad\text{where}\hspace{3pt} v_j\in\R^{n_j},\qquad f_j\mapsto f_j(\lambda\cdot)\quad\text{where}\hspace{3pt}\lambda>0,$$
and so by saying that there exists a `unique up to invariances' input satisfying a certain property, we mean that there exists a unique orbit under these actions for which that property holds on each input in that orbit. The reader may note that we have omitted the necessary condition for gaussian extremisability from our statement of Theorem \ref{bcct}---roughly speaking, this necessary condition is that there must exist a `direct sum decomposition' of the datum into simple components. It should be made clear that H\"older's inequality, the Loomis--Whitney inequality, and certain cases of Young's convolution inequality are important examples of gaussian-extremisable Brascamp--Lieb inequalities that are associated with non-simple data. \par
Before we can continue with our discussion of the extremisability of linear Brascamp--Lieb inequalities, we shall need to introduce some notation related to gaussians, as it shall often be useful to tailor our notation specifically for them. Let $(\txtbf{L},\txtbf{p})$ be a Brascamp--Lieb datum and let $\txtbf{G}=(G_j)_{j=1}^m$ be an $m$-tuple of gaussians of the form $G_j(x):=\exp(-\pi\langle A_j x,x\rangle)$, where each $A_j\in\R^{n_j\times n_j}$ is a positive-definite matrix. We refer to the $m$-tuple of symmetric positive definite matrices $\txtbf{A}:=(A_j)_{j=1}^m$ as a \textit{gaussian input}, and define the functional $\BLg(\txtbf{L},\txtbf{p};\txtbf{A}):=\BL(\txtbf{L},\txtbf{p};\txtbf{G})$. Of course, since integrals of gaussians may be computed in terms of their underlying matrices, we then have access to the following explicit formula:
\begin{align*}
    \BLg(\txtbf{L},\txtbf{p};\txtbf{A})=\frac{\prod_{j=1}^m\det(A_j)^{p_j/2}}{\det\left(\sum_{j=1}^mp_jL_j^*A_jL_j\right)^{1/2}}
\end{align*}
We will find the regularity of the Brascamp--Lieb constant to be highly relevant to our analysis of the nonlinear case. This subject enjoys its own surprisingly rich theory in the literature; it was Bennett, Bez, Flock, and Lee who first established that the Brascamp--Lieb constant is locally bounded on $\mathcal{F}$ \cite{bennett2018stability}, wherein it may be also be observed that $\mathcal{F}$ is open in $\R^{n_1\times n}\times...\times\R^{n_m\times n}$. This result was later improved to continuity by Bennett, Bez, Cowling, and Flock, who also provide counterexamples showing that the Brascamp--Lieb constant is in general not differentiable.
\begin{thm}[Bennett, Bez, Cowling, Flock (2016) \cite{bennett2017behaviour}]\label{BrasCont}
The mapping $\BL(\cdot,\txtbf{p}):\mathcal{F}\rightarrow\R$ is continuous, but not differentiable.
\end{thm}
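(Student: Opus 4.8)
The plan is to pass to gaussians via Lieb's Theorem, deduce lower semicontinuity for free, and obtain upper semicontinuity by controlling near-optimal gaussians using the local boundedness of $\BL(\cdot,\txtbf{p})$ together with the feasibility inequality (\ref{eq:feas}). By Theorem \ref{lieb} and Lieb's formula for the gaussian Brascamp--Lieb functional,
\begin{align*}
\BL(\txtbf{L},\txtbf{p})^2=\sup_{\txtbf{A}}\Phi(\txtbf{L},\txtbf{A}),\qquad\Phi(\txtbf{L},\txtbf{A}):=\frac{\prod_{j=1}^m(\det A_j)^{p_j}}{\det\left(\sum_{j=1}^mp_jL_j^*A_jL_j\right)},
\end{align*}
where the supremum runs over all $m$-tuples $\txtbf{A}=(A_j)_{j=1}^m$ of positive definite matrices $A_j\in\R^{n_j\times n_j}$. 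For each fixed $\txtbf{A}$ the function $\txtbf{L}\mapsto\Phi(\txtbf{L},\txtbf{A})$ is a rational function of the entries of $\txtbf{L}$ whose denominator is positive throughout $\mathcal{F}$: indeed $\sum_jp_jL_j^*A_jL_j$ is positive definite provided $\bigcap_{j:p_j>0}\ker L_j=\{0\}$, which is forced by (\ref{eq:feas}) applied to $V=\bigcap_{j:p_j>0}\ker L_j$. Hence each $\Phi(\cdot,\txtbf{A})$ is continuous on $\mathcal{F}$, and so $\BL(\cdot,\txtbf{p})^2$, a supremum of continuous functions, is lower semicontinuous.

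For upper semicontinuity the essential point is a non-degeneracy property of near-optimal gaussians, which I would prove by induction on $n$. The two inputs are: (i) the local boundedness of $\BL(\cdot,\txtbf{p})$ on $\mathcal{F}$, due to Bennett, Bez, Flock and Lee \cite{bennett2018stability} (which is also the source of the openness of $\mathcal{F}$), giving, for any $\txtbf{L}_0\in\mathcal{F}$, a neighbourhood $U\ni\txtbf{L}_0$ on which $\Phi$ is bounded above by some finite $M$; and (ii) a quantitative loss coming from (\ref{eq:feas}): if the form $\sum_jp_jL_j^*A_jL_j$ is much smaller along a subspace $V$ than along $V^\perp$, then rescaling $\txtbf{A}$ along that splitting shows that $\Phi(\txtbf{L},\txtbf{A})$ is comparable to $\tau^{\sum_jp_j\dim(L_jV)-\dim V}$, where $\tau^{-1}$ measures the eccentricity, so that unless $V$ is critical the strict inequality in (\ref{eq:feas}) forces $\Phi(\txtbf{L},\txtbf{A})$ a definite amount below $\BL(\txtbf{L},\txtbf{p})^2$. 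Combining (i), (ii) and the surjectivity of the $L_j$: when $(\txtbf{L}_0,\txtbf{p})$ is simple there is a compact set $K$ of positive definite tuples such that, after the value-preserving global rescaling $A_j\mapsto\lambda A_j$, every $\txtbf{A}$ with $\Phi(\txtbf{L},\txtbf{A})\geq(1-\epsilon)\BL(\txtbf{L},\txtbf{p})^2$ and $\txtbf{L}\in U$ may be taken to lie in $K$; when $(\txtbf{L}_0,\txtbf{p})$ is not simple, one first strips off a complementary pair of critical subspaces, along which $\BL(\txtbf{L},\txtbf{p})$ factors as a product of the Brascamp--Lieb constants of two data of strictly smaller dimension (see \cite{bcct}), and applies the inductive hypothesis to each factor. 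Granted this, upper semicontinuity at $\txtbf{L}_0$ is routine: for $\txtbf{L}_k\to\txtbf{L}_0$ in $\mathcal{F}$, choose $\txtbf{A}^{(k)}$ with $\Phi(\txtbf{L}_k,\txtbf{A}^{(k)})\geq\BL(\txtbf{L}_k,\txtbf{p})^2-1/k$, rescale into $K$, pass to a subsequence with $\txtbf{A}^{(k)}\to\txtbf{A}^*$ positive definite, and invoke joint continuity of $\Phi$ to obtain $\BL(\txtbf{L}_0,\txtbf{p})^2\geq\Phi(\txtbf{L}_0,\txtbf{A}^*)=\lim_k\Phi(\txtbf{L}_k,\txtbf{A}^{(k)})=\limsup_k\BL(\txtbf{L}_k,\txtbf{p})^2$. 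With the lower semicontinuity above this establishes continuity on $\mathcal{F}$.

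Non-differentiability is then established by an explicit example: a real-analytic one-parameter family $t\mapsto\txtbf{L}_t$ lying in $\mathcal{F}$ for $t$ near $0$ for which the gaussian supremum can be evaluated in closed form and has distinct one-sided derivatives at $t=0$. The mechanism is that the family crosses a stratum of $\mathcal{F}$: for $t>0$ the datum is simple, the supremum is attained at an interior critical point and varies real-analytically, whereas at $t=0$ a critical subspace appears and for $t\leq 0$ the value is instead governed by a lower-dimensional sub-datum through the critical-subspace splitting (equivalently, the optimising gaussian escapes to infinity); the two analytic branches agree at $t=0$ but meet at a corner, which a direct computation with Lieb's formula on the split pieces makes explicit.

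The main obstacle is input (ii) and the induction built upon it: extracting the loss in $\Phi$ from (\ref{eq:feas}) uniformly over a neighbourhood of $\txtbf{L}_0$ while accounting for the fact that critical subspaces may be created, destroyed and moved as $\txtbf{L}$ varies is exactly what forces one to split the datum along complementary critical subspaces and to induct on the dimension.
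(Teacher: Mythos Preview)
The paper does not give a proof of this statement at all: Theorem~\ref{BrasCont} is quoted from \cite{bennett2017behaviour} as a known result and is used only as background (and in fact the paper relies on the sharper H\"older-continuity statement, Proposition~\ref{BLholder}, also quoted from \cite{bennett2020nonlinear}). So there is no ``paper's own proof'' to compare your proposal against.

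That said, your sketch is broadly in the spirit of the argument in \cite{bennett2017behaviour}: lower semicontinuity from Lieb's theorem as a supremum of continuous functions, and upper semicontinuity via a compactness/non-degeneracy statement for near-optimal gaussians, together with the critical-subspace splitting structure from \cite{bcct}. One point to be careful about in your inductive step: the critical-subspace decomposition at a non-simple $\txtbf{L}_0$ need not persist under perturbation (nearby $\txtbf{L}$ may be simple, or have different critical subspaces), so the factorisation of $\BL(\txtbf{L},\txtbf{p})$ along the complementary critical pair is only exact at $\txtbf{L}_0$ itself; making this uniform in a neighbourhood of $\txtbf{L}_0$ is precisely the delicate part, and your sketch acknowledges but does not resolve it. In \cite{bennett2017behaviour} this is handled by a careful quantitative analysis rather than a soft induction, and in \cite{bennett2020nonlinear} by the effective Lieb theorem (Theorem~\ref{efflieb} here), which directly supplies the uniform compactness of near-extremisers you are aiming for.
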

Buschenhenke and the above authors later refined this statement in \cite{bennett2020nonlinear}, where they establish that the Brascamp--Lieb constant is locally H\"older continuous on the set of feasible data. In light of the third part of Theorem \ref{bcct}, we know there exists a unique (up to scaling invariance) map $\txtbf{Y}:\mathcal{S}\rightarrow\mathcal{G}$ such that $\BLg(\txtbf{L},\txtbf{p};\txtbf{Y}(\txtbf{L}))=\BL(\txtbf{L},\txtbf{p})$.
\begin{thm}[Valdimarsson (2010) \cite{valdimarsson2011geometric}]\label{vald}
The set $\mathcal{S}$ is open in $\mathcal{F}$, and the map $\txtbf{Y}$ is smooth, whence the Brascamp--Lieb constant is also smooth on $\mathcal{S}$.
\end{thm}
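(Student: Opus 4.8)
The plan is to establish the two assertions of the theorem in turn. \emph{Openness of $\mathcal{S}$.} For $\txtbf{L}\in\mathcal{F}$ and a subspace $V\leq\R^n$ set $g(\txtbf{L},V):=\sum_{j=1}^m p_j\dim(L_jV)-\dim V$, so that, feasibility being assumed, $\txtbf{L}$ lies in $\mathcal{S}$ exactly when $g(\txtbf{L},V)>0$ for every proper non-trivial $V$. I would first observe that $\dim(L_jV)=\operatorname{rank}(L_j\pi_V)$, where $\pi_V$ is the orthogonal projection onto $V$; since $\pi_V$ depends continuously on $V$ within each Grassmannian $\operatorname{Gr}(k,n)$ and matrix rank is lower semicontinuous, $(\txtbf{L},V)\mapsto g(\txtbf{L},V)$ is lower semicontinuous on $\mathcal{F}\times\operatorname{Gr}(k,n)$ and, for fixed $\txtbf{L}$, takes only finitely many values. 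Openness then follows by a compactness argument: if it failed there would be $\txtbf{L}_0\in\mathcal{S}$, points $\txtbf{L}_i\to\txtbf{L}_0$ in $\mathcal{F}$ (using that $\mathcal{F}$ is open), and proper non-trivial subspaces $V_i$ critical for $\txtbf{L}_i$, i.e.\ with $g(\txtbf{L}_i,V_i)=0$; after passing to a subsequence along which $V_i\to V_0$ inside a fixed $\operatorname{Gr}(k,n)$, $1\leq k\leq n-1$, lower semicontinuity would give $g(\txtbf{L}_0,V_0)\leq0$, while feasibility of $\txtbf{L}_0$ gives $g(\txtbf{L}_0,V_0)\geq0$, so $V_0$ is a critical subspace of $\txtbf{L}_0$, contradicting simplicity.

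For the smoothness of $\txtbf{Y}$ I would work from the closed form of the Gaussian functional. Writing $\txtbf{A}=(A_j)_{j=1}^m$ for the tuple of positive definite matrices parametrising a centred Gaussian input, a direct Gaussian integration gives
\[
\BLg(\txtbf{L},\txtbf{p};\txtbf{A})=\left(\frac{\prod_{j=1}^m(\det A_j)^{p_j}}{\det M(\txtbf{A})}\right)^{1/2},\qquad M(\txtbf{A}):=\sum_{j=1}^m p_j L_j^* A_j L_j ,
\]
and when $\txtbf{L}\in\mathcal{F}$ the feasibility inequality (applied to lines) forces $M(\txtbf{A})$ to be positive definite, so that $\log\BLg$ is a smooth --- indeed real-analytic --- function of $(\txtbf{L},\txtbf{A})$ on this open set, invariant under the one-parameter scaling $\txtbf{A}\mapsto\lambda\txtbf{A}$. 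Fix $\txtbf{L}_0\in\mathcal{S}$. By Theorem \ref{bcct} the supremum defining $\BL(\txtbf{L}_0,\txtbf{p})$ is attained, necessarily at an interior maximiser of $\log\BLg(\txtbf{L}_0,\txtbf{p};\cdot)$, which therefore solves the Euler--Lagrange system $A_j^{-1}=L_jM(\txtbf{A})^{-1}L_j^*$ for all $j$; the uniqueness clause of Theorem \ref{bcct} makes the maximiser unique up to scaling, so the normalisation in the statement selects $\txtbf{Y}(\txtbf{L}_0)$. The strategy is then to apply the implicit function theorem to the smooth map $\Phi(\txtbf{L},\txtbf{A})$ whose vanishing encodes the Euler--Lagrange system modulo its one-dimensional scaling gauge together with the normalisation constraint; $\Phi$ is a map between equidimensional manifolds with $\Phi(\txtbf{L}_0,\txtbf{Y}(\txtbf{L}_0))=0$.

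The step I expect to be the main obstacle is checking that $D_{\txtbf{A}}\Phi$ is invertible at $(\txtbf{L}_0,\txtbf{Y}(\txtbf{L}_0))$. Up to the transverse normalisation direction this derivative is the Hessian of $\txtbf{A}\mapsto\log\BLg(\txtbf{L}_0,\txtbf{p};\txtbf{A})$ restricted to a complement of the scaling direction at the maximiser; negative semidefiniteness is automatic from the second-order maximality condition, so the whole issue is to show that the kernel of this Hessian is exactly the scaling direction. This is precisely where simplicity of $\txtbf{L}_0$ must be used: the idea is that a Hessian null vector transverse to scaling, unwound through the algebraic structure of the Euler--Lagrange equations, would produce a critical subspace --- which a simple datum lacks --- so that no such vector exists (alternatively one may invoke Valdimarsson's structural analysis of extremisers in \cite{valdimarsson2011geometric}). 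Granting this non-degeneracy, the implicit function theorem produces a unique smooth branch $\txtbf{L}\mapsto\txtbf{A}(\txtbf{L})$ of normalised solutions near $\txtbf{L}_0$, which by uniqueness of the normalised maximiser coincides with $\txtbf{Y}$; hence $\txtbf{Y}$ is smooth on all of $\mathcal{S}$. Finally, since $\BL(\txtbf{L},\txtbf{p})=\BLg(\txtbf{L},\txtbf{p};\txtbf{Y}(\txtbf{L}))$ on $\mathcal{S}$ and $\BLg$ is smooth wherever it is finite, the chain rule gives smoothness of $\BL(\cdot,\txtbf{p})$ on $\mathcal{S}$.
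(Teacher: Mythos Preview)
The paper does not prove this statement at all: Theorem~\ref{vald} is quoted as a result of Valdimarsson \cite{valdimarsson2011geometric} and is used as background for the discussion of regularity of the Brascamp--Lieb constant, so there is no ``paper's own proof'' to compare against. Your outline follows what is essentially the standard route one would expect (and is close in spirit to Valdimarsson's own argument): openness via lower semicontinuity of $\dim(L_jV)$ and compactness of Grassmannians, and smoothness via the implicit function theorem applied to the Euler--Lagrange system for $\log\BLg$.

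That said, your proposal has a genuine gap at exactly the point you flag. You correctly identify that the crux is showing the Hessian of $\txtbf{A}\mapsto\log\BLg(\txtbf{L}_0,\txtbf{p};\txtbf{A})$ at the maximiser has kernel equal to the scaling direction, but you do not actually prove this: the sentence ``a Hessian null vector transverse to scaling, unwound through the algebraic structure of the Euler--Lagrange equations, would produce a critical subspace'' is a plausible heuristic, not an argument, and the alternative of ``invok[ing] Valdimarsson's structural analysis of extremisers in \cite{valdimarsson2011geometric}'' is circular, since that is the very paper whose theorem you are attempting to prove. Making this step rigorous is genuinely the content of the result --- one has to compute the Hessian explicitly (it is a quadratic form in the symmetric perturbations $H_j$ of $A_j$, involving traces against $M^{-1}$ and the $L_j$), show it is negative semidefinite, and then characterise its kernel in terms of invariant subspaces of the associated ``geometric'' map, which is where simplicity enters. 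Without this computation your argument remains a sketch rather than a proof.
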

Lieb's theorem tells us that, for any $\delta>0$ and any feasible datum $(\txtbf{L},\txtbf{p})$, there exists a gaussian input $\txtbf{A}$ such that $\BLg(\txtbf{L},\txtbf{p};\txtbf{A})\geq (1-\delta)\BL(\txtbf{L},\txtbf{p})$; we shall refer to such an $\txtbf{A}$ as a \emph{$\delta$-near extremiser} for $(\txtbf{L},\txtbf{p})$. This does not, however, come with any information about the norms and eccentricities of the components of $\txtbf{A}$, nor whether or not this choice of $\txtbf{A}$ may be made differentiably in $\txtbf{L}$ (or even continuously for that matter!). As we shall be dealing with generically non-simple Brascamp--Lieb data, we will be required to prove a certain `effective' version of Lieb's theorem that affords one control on the norms and eccentricities of these $\delta$-near extremisers, playing a similar role to Theorem 1.3 of \cite{bennett2020nonlinear}, but developing on this theorem by additionally asserting that this choice may be made continuously differentiably in $\txtbf{L}\in\mathcal{F}$. This result is formulated as a feasible analogue to Theorem \ref{vald}: for each $\delta>0$, we shall construct a map $\txtbf{Y}_\delta:\mathcal{F}\rightarrow\mathcal{G}$ that sends a given feasible Brascamp--Lieb datum to a $\delta$-near gaussian extremiser for said data, and is such that $\Vert \txtbf{Y}_\delta\Vert_{C^1}$ does not blow up too quickly as $\delta\rightarrow 0$. The construction of this map shall be the content of the forthcoming Theorem \ref{impefflieb}.\par
Our initial exposition of the linear theory now complete, in the next section we turn our attention to the main focus of this paper, this being nonlinear Brascamp--Lieb inequalities.

\subsection{Nonlinear Brascamp--Lieb Inequalities}
Nonlinear Brascamp--Lieb inequalities are a further generalisation of the linear Brascamp--Lieb inequalities, where the linear surjections $L_j$ are replaced with submersions $B_j:M\rightarrow M_j$ between Riemannian manifolds. Given an $m$-tuple of exponents $\txtbf{p}=(p_j)_{j=1}^m$, we shall consider the corresponding inequality:
\begin{align*}
\int_M\prod_{j=1}^m(f_j\circ B_j)^{p_j}\leq C\prod_{j=1}^m\left(\int_{M_j}f_j\right)^{p_j}.
\end{align*}
We shall refer to the pair $(\txtbf{B},\txtbf{p})$ as a \emph{nonlinear Brascamp--Lieb datum} (furthermore we shall refer to such a datum as being $C^k$ for some $k\in\N$ if each of the submersions $B_j$ are $C^k$). Inequalities of this type arise quite naturally in PDE and Fourier restriction contexts, as evidenced in \cite{koch2015convolution,bejenaru2010convolution,bejenaru2011convolution} and \cite{ bennett2010some, bennett2017behaviour, bennett2005non} respectively. Pioneering work was done by Bennett, Carbery, and Wright in \cite{bennett2005non}, wherein they established a nonlinear $C^3$ perturbation of the Loomis--Whitney inequality using the method of refinements due to Christ. This was later improved to $C^{1,\alpha}$-regularity by Bennett and Bez in \cite{bennett2010some} via an induction-on-scales strategy based on work of Bejenaru, Herr, and Tataru \cite{bejenaru2010convolution}, and may itself be viewed as a precursor to the `tight induction-on-scales' methodology developed in \cite{bennett2020nonlinear}. The nonlinear Loomis--Whitney inequality with $C^1$-regularity was later established by Carbery, H\"anninen, and Valdimarsson via multilinear factorisation methods \cite{carbery2022multilinear}. At a similar time to the writing of \cite{bennett2005non}, Bennett, Carbery, and Tao proved a curvilinear multilinear Kakeya inequality (which may be viewed as a more general form of a nonlinear Loomis--Whitney inequality) in \cite{bct}, a celebrated work that has had a substantial influence on harmonic analysis. This result was later generalised to the full Kakeya--Brascamp--Lieb setting by Bennett, Bez, Flock, and Lee \cite{bennett2018stability}, wherein they also prove a Sobolev estimate on nonlinear Brascamp--Lieb forms. Significant progress on the topic of nonlinear Brascamp--Lieb inequalities was made by Bennett, Bez, Buschenhenke, Cowling, and Flock in \cite{bennett2020nonlinear}, where they employ a tight induction-on-scales method that utilises techniques from convex optimisation to prove the following very general local nonlinear Brascamp--Lieb inequality.

\begin{thm}[Local Nonlinear Brascamp--Lieb Inequality (2018)\cite{bennett2020nonlinear}]\label{bbbfc}
Let $\epsilon>0$, and suppose that $(\txtbf{B},\txtbf{p})$ is a $C^2$ nonlinear Brascamp--Lieb datum defined over some neighbourhood $\widetilde{U}$ of a point $x_0\in\R^n$. There exists a neighbourhood $U\subset\widetilde{U}$ of $x_0$ such that the following inequality holds for all non-negative $f_j\in L^1(\R^{n_j})$:
\begin{align}
    \int_{U}\prod_{j=1}^mf_j\circ B_j(x)^{p_j}dx\leq(1+\epsilon)\BL(\txtbf{dB}(x_0),\txtbf{p})\prod_{j=1}^m\left(\int_{\mathbb{R}^{n_j}}f_j\right)^{p_j}.\label{eq:nbras}
\end{align}
\end{thm}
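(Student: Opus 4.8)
The plan is to prove Theorem~\ref{bbbfc} by an induction-on-scales argument, following the template of Bennett, Bez, Buschenhenke, Cowling, and Flock, but here I only sketch the structure since the full argument is the heart of the paper. First I would reduce to the case where $B_j(x_0)=0$ and where, after applying the linear maps $dB_j(x_0)^{-1}$ on the target side (which only changes $f_j$ by an affine reparametrisation and leaves the ratio in \eqref{eq:nbras} invariant up to Jacobian factors absorbed into $\BL(\txtbf{dB}(x_0),\txtbf{p})$), the datum agrees with its linearisation $\txtbf{dB}(x_0)$ to second order at $x_0$. By the stability/continuity theory of Theorem~\ref{BrasCont} and the local boundedness of $\BL$ on $\mathcal{F}$, the datum $(\txtbf{dB}(x),\txtbf{p})$ remains feasible for $x$ in a small enough neighbourhood $U$, and $\BL(\txtbf{dB}(x),\txtbf{p}) \leq (1+\epsilon/2)\BL(\txtbf{dB}(x_0),\txtbf{p})$ there; this lets me work with the whole family of linear constants uniformly.

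The core is a scale-recursion. Set $Q(r)$ to be the best constant in \eqref{eq:nbras} when $U$ is replaced by a cube of side $r$ centred at $x_0$ (or a supremum over such cubes), so that $\limsup_{r\to 0} Q(r) \leq \BL(\txtbf{dB}(x_0),\txtbf{p})$ by the almost-linearity of $B_j$ at the infinitesimal scale. I would then establish a submultiplicativity-type inequality of the form $Q(r) \leq (1+C r^\alpha)\, Q(r/2)^{?}$ — more precisely, partition the cube of side $r$ into subcubes of side $r/2$, apply the inductive hypothesis on each subcube after recentering, and control the mismatch between $B_j$ on a subcube and its linearisation at the subcube's centre using the $C^2$ bound $\|D^2 B_j\|_\infty$, which contributes an error $O(r)$ per subcube. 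To glue the pieces one splits each $f_j$ according to the preimages of the subcube images under $B_j$; the key technical device (this is where the convex-optimisation input enters) is that one can pass from the Brascamp--Lieb inequality at scale $r/2$ for each piece to a single inequality at scale $r$ by exploiting the near-extremising Gaussians and the structure of the feasibility polytope, paying only a multiplicative factor that is $1+O(r^\alpha)$ rather than something that compounds. Iterating the recursion from scale $r_0$ down to $0$ and summing the geometric-type error series $\prod_k (1 + C (r_0 2^{-k})^\alpha) < \infty$ yields $Q(r_0) \leq (1+\epsilon)\BL(\txtbf{dB}(x_0),\txtbf{p})$ once $r_0$ is small, which is the claim with $U$ the cube of side $r_0$.

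The main obstacle is the gluing step: showing that combining the scale-$r/2$ inequalities on the subcubes loses only a factor $1+O(r^\alpha)$ and not, say, a factor growing with the number of subcubes. This requires a quantitative, uniform handle on near-extremisers for the nearby linear data — precisely the role of the well-quantified Lieb theorem (the forthcoming Theorem~\ref{impefflieb} and the map $\txtbf{Y}_\delta$ with controlled $W^{1,\infty}$ norm): one needs the near-extremising Gaussians to vary boundedly and boundedly-Lipschitzly as the base point $x$ ranges over $U$, so that the heat-flow monotonicity machinery can be run with errors that are genuinely $O(r^\alpha)$. A secondary difficulty is ensuring the recursion closes self-consistently — the constant $C$ and exponent $\alpha$ appearing in the per-step error must not depend on the scale $r$, which forces one to track the geometry ($\|dB_j\|$, $\|dB_j^{-1}\|$, $\|D^2B_j\|$, the feasibility margin of $\txtbf{dB}(x_0)$) uniformly over $U$ from the outset.
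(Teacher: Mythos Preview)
Your sketch follows the original induction-on-scales scheme of \cite{bennett2020nonlinear}, from which Theorem~\ref{bbbfc} is in fact quoted; in the present paper the statement is not reproved directly but is recovered, in the sharpened form of Corollary~\ref{cor1}, as a consequence of the nonlinear Ball inequality (Theorem~\ref{NBall1}). The two routes are genuinely different. You partition a cube of side $r$ into subcubes of side $r/2$, apply the inductive hypothesis on each, and glue; the delicate recombination step in \cite{bennett2020nonlinear} is handled via a nonlinear analogue of the \emph{localisation} inequality \eqref{eq:ball2}, not via heat-flow monotonicity as you suggest at the end --- so your invocation of ``the heat-flow monotonicity machinery'' for the gluing is a slight conflation of the two papers. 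The present paper instead proves a nonlinear analogue of \eqref{eq:ball1}: once the near-monotonicity \eqref{eq:Nball1} is in hand, Corollary~\ref{cor1} follows in essentially one stroke --- apply \eqref{eq:Nball1} on $U_{\tau^\gamma}(x_0)$, use Lemmas~\ref{switchH} and~\ref{loconst} to freeze the basepoint and replace $B_j$ by its linearisation $L_j^{x_0}$, and invoke the linear inequality a single time. No spatial decomposition or iterated gluing appears here because the induction on scales has already been absorbed into the proof of Theorem~\ref{NBall1} (via Proposition~\ref{timestep} and the submultiplicativity of $C(s,t)$). Your route has the virtue of being self-contained without the heat-flow apparatus; the paper's route yields the strictly stronger Theorem~\ref{NBall1}, which also delivers Corollary~\ref{cor2}.
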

It is natural to ask the question of whether or not there holds a stronger formulation of Theorem \ref{bbbfc} that omits the $(1+\epsilon)$ factor, since we know that certain sharp results hold on the sphere, as established by Carlen, Lieb and Loss in \cite{carlen2004sharp}, later generalised to the setting of compact homogeneous spaces by Bramati in \cite{bramati2019brascamp}, however such a result at the level of generality of \eqref{eq:nbras} is likely beyond the reach of the induction-on-scales methods the authors employ. Before we move onto the topic of global nonlinear Brascamp--Lieb inequalities, we shall mention that there are some other interesting results for compact domains that depart from the usual transversality assumptions of the aforementioned authors, requiring a curvature condition well. This includes $L^p$-improving estimates for multilinear Radon-like transforms, explored by Tao and Wright in the bilinear setting in \cite{tao2003} then generalised by Stovall to the fully multilinear setting in \cite{stovall2011p}. While the topic of Brascamp--Lieb inequalities with curvature conditions is fascinating, we shall not investigate considerations of this type in this paper.\par

While some of the central questions of the local theory of nonlinear Brascamp--Lieb inequalities have been well-addressed, in the global setting many interesting questions remain open, and at this stage almost all known results require rigid structural assumptions. Examples of known global nonlinear Brascamp--Lieb inequalities include one for data that is homogeneous of degree one \cite{homo}, a global weighted nonlinear Loomis--Whitney inequality in $\R^3$ \cite{koch2015convolution}, a weighted nonlinear Brascamp--Lieb inequality for data admitting a certain type of algebraic structure \cite{duncan2021algebraic}, which is based on the Kakeya--Brascamp--Lieb inequalities of Zhang \cite{zhang2017endpoint} and Zorin--Kranich \cite{zorin2019kakeya}, and some results in the context of integration spaces \cite{carl}. This paper represents a small step towards a general theory of global nonlinear Brascamp--Lieb inequalities, and the author hopes that the results herein would serve as useful tools for future study on this topic.

\subsection{Heat-flow Monotonicity}
Establishing that an inequality enjoys some sort of monotonicity property under heat-flow is the basis of many effective proof strategies in a variety of contexts. Schematically, the manner in which such a strategy works is that if one wishes to prove an inequality of the form $A(f)\leq B(f)$ for all $f$ in some class of functions, where $A$ and $B$ are functionals defined on this class, it is enough to prove that there exists a semigroup $S^t$ acting on this class such that $A(f)\leq\underset{t\rightarrow0}{\liminf}A(S^tf)$, $A(S^t f)$ is increasing in $t$, and that $\limsup_{t\rightarrow\infty}A(S^tf)\leq B(f)$. Carlen, Lieb and Loss exploit heat-flow monotonicity to great effect in their proof of the rank-one case of the Brascamp--Lieb inequality \cite{carlen2004sharp}, generalisations of which can be found in, for example, \cite{bcct,bramati2019brascamp}. Heat-flow techniques were also used by Bennett, Carbery and Tao to great effect in their treatment of the multilinear Kakeya and restriction problems \cite{bct}, later generalised by Tao in \cite{tao2020sharp}. Methods that exploit heat-flow monotonicity are often also referred to as `semigroup interpolation' methods (see an article of Ledoux for further reading \cite{ledoux2014heat}), and a systematic study of the generation of monotone quantities for the heat equation can be found in \cite{bennett2015generating}.
An interesting manifestation of heat-flow monotonicity for the Brascamp--Lieb functional arises from the following inequality due to Keith Ball.  
\begin{lem}[Ball's inequality \cite{barthe1998optimal,bennett2010some}]\label{ballsineq}
Let $(\txtbf{L},\txtbf{p})$ be a Brascamp--Lieb datum and let $\txtbf{f}=(f_j)_{j=1}^m, \txtbf{g}=(g_j)_{j=1}^m\in\prod_{j=1}^mL^1(\mathbb{R}^{n_j})$. Given $x\in\R^n$, we define $\txtbf{h}^x:=(f_j(\cdot)g_j(L_j(x)-\cdot))_{j=1}^m$. For all choices of inputs $\txtbf{f}$ and $\txtbf{g}$, the following inequality holds.
\begin{align*}
\BL(\txtbf{L},\txtbf{p};\txtbf{f})\BL(\txtbf{L},\txtbf{p};\txtbf{g})\leq\underset{x\in\mathbb{R}^n}{\sup}\BL(\txtbf{L},\txtbf{p};\txtbf{h}^x)\BL(\txtbf{L},\txtbf{p};\txtbf{f}\ast\txtbf{g})
\end{align*}
\end{lem}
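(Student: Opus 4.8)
The plan is to manipulate the product of the numerators of the two Brascamp--Lieb functionals on the left-hand side. Write $F:=\prod_{j=1}^m(f_j\circ L_j)^{p_j}$ and $G:=\prod_{j=1}^m(g_j\circ L_j)^{p_j}$, so that, directly from the definition, $\int_{\R^n}F=\BL(\txtbf{L},\txtbf{p};\txtbf{f})\prod_{j=1}^m\big(\int f_j\big)^{p_j}$ and likewise for $G$. By Tonelli's theorem (all integrands being non-negative),
\[
\Big(\int_{\R^n}F\Big)\Big(\int_{\R^n}G\Big)=\int_{\R^n}\int_{\R^n}\prod_{j=1}^m\big(f_j(L_jy)\,g_j(L_jz)\big)^{p_j}\,dy\,dz .
\]
First I would apply the change of variables $(x,w)\mapsto(y,z):=(w,\,x-w)$, which is a linear bijection of $\R^n\times\R^n$ with Jacobian of absolute value $1$; using the linearity of each $L_j$ to write $L_jz=L_jx-L_jw$, the right-hand side becomes
\[
\int_{\R^n}\bigg(\int_{\R^n}\prod_{j=1}^m\big(f_j(L_jw)\,g_j(L_jx-L_jw)\big)^{p_j}\,dw\bigg)\,dx
=\int_{\R^n}\bigg(\int_{\R^n}\prod_{j=1}^m h^x_j(L_jw)^{p_j}\,dw\bigg)\,dx,
\]
where $h^x_j(t):=f_j(t)\,g_j(L_jx-t)$ are exactly the components of $\txtbf{h}^x$, and the marginals are $\int_{\R^{n_j}}h^x_j=(f_j\ast g_j)(L_jx)$.

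Next I would invoke the definition of the functional fibrewise in $x$. For almost every $x$ the tuple $\txtbf{h}^x$ consists of non-zero $L^1$ functions — this uses Young's inequality, which puts $f_j\ast g_j$ in $L^1(\R^{n_j})$, hence finite almost everywhere, together with the surjectivity of $L_j$, which guarantees that the pull-back of the corresponding null set is null in $\R^n$ — except on the set where some $(f_j\ast g_j)(L_jx)$ vanishes, and there the inner integral and $\prod_j(f_j\ast g_j)(L_jx)^{p_j}$ both vanish. Thus, as an identity valued in $[0,\infty]$ for almost every $x$ and a trivial inequality on the rest,
\[
\int_{\R^n}\prod_{j=1}^m h^x_j(L_jw)^{p_j}\,dw=\BL(\txtbf{L},\txtbf{p};\txtbf{h}^x)\prod_{j=1}^m(f_j\ast g_j)(L_jx)^{p_j}\leq\Big(\sup_{x\in\R^n}\BL(\txtbf{L},\txtbf{p};\txtbf{h}^x)\Big)\prod_{j=1}^m(f_j\ast g_j)(L_jx)^{p_j}.
\]
Bounding $\BL(\txtbf{L},\txtbf{p};\txtbf{h}^x)$ pointwise by its supremum before integrating neatly sidesteps any measurability question for the map $x\mapsto\BL(\txtbf{L},\txtbf{p};\txtbf{h}^x)$, since the surviving integrand $\prod_j(f_j\ast g_j)(L_jx)^{p_j}$ is manifestly measurable.

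Integrating the last display over $x$ and recognising $\int_{\R^n}\prod_j(f_j\ast g_j)(L_jx)^{p_j}\,dx=\BL(\txtbf{L},\txtbf{p};\txtbf{f}\ast\txtbf{g})\prod_j\big(\int_{\R^{n_j}}f_j\ast g_j\big)^{p_j}$, and then using $\int_{\R^{n_j}}f_j\ast g_j=\big(\int f_j\big)\big(\int g_j\big)$ (Tonelli once more), I would arrive at
\[
\Big(\int_{\R^n}F\Big)\Big(\int_{\R^n}G\Big)\leq\Big(\sup_{x\in\R^n}\BL(\txtbf{L},\txtbf{p};\txtbf{h}^x)\Big)\,\BL(\txtbf{L},\txtbf{p};\txtbf{f}\ast\txtbf{g})\prod_{j=1}^m\big(\int f_j\big)^{p_j}\big(\int g_j\big)^{p_j}.
\]
Since $\int F=\BL(\txtbf{L},\txtbf{p};\txtbf{f})\prod_j\big(\int f_j\big)^{p_j}$ and similarly for $G$, the factor $\prod_j\big(\int f_j\big)^{p_j}\big(\int g_j\big)^{p_j}$ — finite and strictly positive because the $f_j,g_j$ are non-zero $L^1$ functions — cancels from both sides, leaving exactly the asserted inequality.

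I do not anticipate a genuine obstacle: the entire content is the choice of unimodular substitution that exposes the fibres $\txtbf{h}^x$, after which everything is bookkeeping plus non-negativity. The only points meriting a word of care are the almost-everywhere admissibility of $\txtbf{h}^x$ noted above, and the harmless reduction to the case $p_j>0$ for every $j$ (an index with $p_j=0$ contributes an identically-$1$ factor throughout and may be deleted from the datum). Conceptually, the supremum over $x$ compensates for the failure of the naive ``convolution of the products equals product of the convolutions'' identity: the honest convolution structure lives only fibrewise over $x$, and passing to the worst fibre is precisely what converts it into the stated submultiplicative-type estimate.
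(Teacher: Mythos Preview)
The paper does not supply its own proof of this lemma; it is quoted from the literature with the citations \cite{barthe1998optimal,bennett2010some} and then used as a black box. Your argument is correct and is precisely the standard one found in those references: multiply the two numerators, perform the unimodular change of variables $(y,z)\mapsto(w,x-w)$ so that the inner integral becomes the Brascamp--Lieb integral for $\txtbf{h}^x$, rewrite that inner integral as $\BL(\txtbf{L},\txtbf{p};\txtbf{h}^x)\prod_j(f_j\ast g_j)(L_jx)^{p_j}$, bound the first factor by its supremum in $x$, and then recognise the remaining $x$-integral as the numerator of $\BL(\txtbf{L},\txtbf{p};\txtbf{f}\ast\txtbf{g})$. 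Your handling of the null set where some $(f_j\ast g_j)(L_jx)$ vanishes is more careful than is usually spelled out, but entirely in order.
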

 If we assume that $\BL(\txtbf{L},\txtbf{p})<\infty$ and that $\txtbf{g}$ is an extremising input, i.e. $\BL(\txtbf{L},\txtbf{p};\txtbf{g})=\BL(\txtbf{L},\txtbf{p})$, then this inequality implies the following two statements:
\begin{align}
\BL(\txtbf{L},\txtbf{p};\txtbf{f})&\leq\BL(\txtbf{L},\txtbf{p};\txtbf{f}\ast\txtbf{g}) \label{eq:ball1}\\
\nonumber &\\
\BL(\txtbf{L},\txtbf{p};\txtbf{f})&\leq\underset{x\in\mathbb{R}^n}{\sup}\BL(\txtbf{L},\txtbf{p};\txtbf{h}^x) \label{eq:ball2}
\end{align}
An important consequence is that, if we further suppose that $\txtbf{f}$ is an extremiser, then \eqref{eq:ball1} implies that the set of extremisers is closed under convolution. This, together with the the topological closure of extremisers and the scale-invariance of linear Brascamp--Lieb inequalities, guarantees the existence of a gaussian extremiser given the existence of at least one extremiser, as we may convolve a given extremiser with itself iteratively, then apply the central limit theorem to a rescaled version of the resulting sequence to find that the limiting extremiser must be gaussian \cite{bcct}.\par
Suppose that $\txtbf{g}$ is a gaussian extremiser, and define its associated family of rescalings as $\txtbf{g}_{\tau}:=(\tau^{-n_j/2}g_j(\tau^{-1/2}x))_{j=1}^m$ where $\tau>0$. By the scale-invariance of the Brascamp--Lieb inequality, each $\txtbf{g}_{\tau}$ is also an extremiser, hence if we now substitute $\txtbf{g}_{\tau}$ into (\ref{eq:ball1}), we then see that (\ref{eq:ball1}) states that the Brascamp--Lieb functional is monotone increasing as the inputs flow under the following diffusion equation:
\begin{equation*}
    \partial_tf_j = \nabla\cdot(A_j^{-1}\nabla f_j)
\end{equation*}
where $A_j$ is the positive definite matrix such that $g_j:=\exp(-\pi\langle A_j x,x\rangle)$.
We shall now run the scheme outlined at the beginning of this section to derive the extremisability of the Brascamp--Lieb inequality from (\ref{eq:ball1}), as was carried out in a special case in \cite{bcct}.

\begin{lem}
Let $(\txtbf{L},\txtbf{p})$ be a Brascamp--Lieb datum and assume that \eqref{eq:scal} holds.\\
Let $\txtbf{g}(x):=(g_j(x))_{j=1}^m:=(\exp(-\pi\langle A_j x,x\rangle))_{j=1}^m$ for all $x\in\R^{n_j}$, where $A_j\in\R^{n_j\times n_j}$ is positive definite.\par
If \eqref{eq:ball1} holds for all inputs $\txtbf{f}$, then $\txtbf{g}$ extremises the Brascamp--Lieb inequality associated to $(\txtbf{L},\txtbf{p})$, i.e. $\BL(\txtbf{L},\txtbf{p};\txtbf{g})=\BL(\txtbf{L},\txtbf{p})$.
\end{lem}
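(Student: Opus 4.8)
The plan is to run the heat-flow scheme described above, in its simplest discrete-time form, following \cite{bcct}. We begin with routine reductions. Since $\BL(\txtbf{L},\txtbf{p};\cdot)$ is invariant under multiplying each $f_j$ by a positive constant, and since replacing each $f_j$ by its truncation $\min(f_j,N)\mathbf{1}_{\{|\cdot|\leq N\}}$ and letting $N\to\infty$ makes the numerator and denominator of $\BL(\txtbf{L},\txtbf{p};\txtbf{f})$ converge by monotone convergence, it suffices to prove $\BL(\txtbf{L},\txtbf{p};\txtbf{f})\leq\BL(\txtbf{L},\txtbf{p};\txtbf{g})$ for every $\txtbf{f}$ with each $f_j$ bounded, compactly supported, and of unit $L^1$-norm. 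Granting this and taking the supremum over $\txtbf{f}$ gives $\BL(\txtbf{L},\txtbf{p})\leq\BL(\txtbf{L},\txtbf{p};\txtbf{g})\leq\BL(\txtbf{L},\txtbf{p})$, so $\BL(\txtbf{L},\txtbf{p})=\BL(\txtbf{L},\txtbf{p};\txtbf{g})$, which is finite and extremised by $\txtbf{g}$. (We tacitly assume $\bigcap_{j:p_j>0}\ker L_j=\{0\}$, which is necessary for the conclusion: otherwise every Brascamp--Lieb functional is infinite, the integrand $\prod_j(f_j\circ L_j)^{p_j}$ being constant along a line.)

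Next we iterate \eqref{eq:ball1}: applying it with $\txtbf{f}$ replaced successively by $\txtbf{f}\ast\txtbf{g}$, $\txtbf{f}\ast\txtbf{g}\ast\txtbf{g}$, and so on, we obtain
\[
\BL(\txtbf{L},\txtbf{p};\txtbf{f})\leq\BL(\txtbf{L},\txtbf{p};\txtbf{f}\ast\txtbf{g}^{\ast k})\qquad\text{for every }k\geq1,
\]
where $\txtbf{g}^{\ast k}:=(g_j^{\ast k})_{j=1}^m$. A Gaussian convolution computation gives $g_j^{\ast k}=\|g_j\|_{L^1}^{k-1}g_{j,k}$ with $g_{j,k}(x):=k^{-n_j/2}g_j(k^{-1/2}x)$; hence, combining invariance under positive scalars with invariance under the joint rescaling $h_j\mapsto h_j(k^{1/2}\cdot)$ (a change of variables together with the scaling hypothesis \eqref{eq:scal}), we get $\BL(\txtbf{L},\txtbf{p};\txtbf{f}\ast\txtbf{g}^{\ast k})=\BL(\txtbf{L},\txtbf{p};\txtbf{u}^{(k)})$, where $u_j^{(k)}:=f_j^{(k)}\ast g_j$ and $f_j^{(k)}(z):=k^{n_j/2}f_j(k^{1/2}z)$ is a probability density concentrating at the origin. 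Since $(f_j^{(k)})_k$ is an approximate identity and $g_j$ is bounded and uniformly continuous, $u_j^{(k)}\to g_j$ pointwise; and since $\supp f_j$ is compact, $\supp f_j^{(k)}\subset B_1$ once $k$ is large, so there $0\leq u_j^{(k)}\leq G_j$, where $G_j(y):=\sup_{|z|\leq1}g_j(y-z)\leq C_j\exp(-c_j|y|^2)$ for suitable $C_j,c_j>0$.

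Finally we let $k\to\infty$. Because $G_j(y)\leq C_j\exp(-c_j|y|^2)$ and the $L_j$ have trivial common kernel, $\int_{\R^n}\prod_{j=1}^m G_j(L_jx)^{p_j}\,dx<\infty$ (a centred Gaussian Brascamp--Lieb functional is finite exactly when $\bigcap_{j:p_j>0}\ker L_j=\{0\}$), so the dominated convergence theorem gives $\int_{\R^n}\prod_j u_j^{(k)}(L_jx)^{p_j}\,dx\to\int_{\R^n}\prod_j g_j(L_jx)^{p_j}\,dx$, while the denominators stay constant since $\int_{\R^{n_j}}u_j^{(k)}=\int_{\R^{n_j}}g_j$ for every $k$. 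Hence $\BL(\txtbf{L},\txtbf{p};\txtbf{u}^{(k)})\to\BL(\txtbf{L},\txtbf{p};\txtbf{g})$, so $\BL(\txtbf{L},\txtbf{p};\txtbf{f})\leq\BL(\txtbf{L},\txtbf{p};\txtbf{g})$, completing the proof modulo the reductions. The one genuinely delicate point is this last passage to the limit — the interchange of limit and integral — which is precisely where the reduction to compactly supported inputs is used, and which here plays the role that a local central limit theorem plays in the general argument of \cite{bcct}.
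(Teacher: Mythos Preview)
Your proof is correct and follows essentially the same heat-flow scheme as the paper: convolve with (rescaled) copies of $\txtbf{g}$, rescale back using \eqref{eq:scal}, and pass to the limit so that the numerator converges to the Gaussian integral while the denominator is unchanged. The only cosmetic difference is that the paper runs the flow with a continuous parameter $\tau$ and a single application of \eqref{eq:ball1} to $\txtbf{g}_\tau$ (implicitly using scale-invariance of the hypothesis), whereas you iterate \eqref{eq:ball1} $k$ times to reach $\txtbf{g}^{\ast k}$; these are equivalent since $g_j^{\ast k}$ is a constant multiple of $g_{j,k}$. Your treatment of the limiting step is in fact more careful than the paper's: you reduce to compactly supported inputs, exhibit an explicit Gaussian majorant $G_j$, and isolate the kernel condition $\bigcap_{j:p_j>0}\ker L_j=\{0\}$ needed for integrability, whereas the paper simply invokes dominated convergence without supplying the dominating function.
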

\begin{proof}
By homogeneity and scale-invariance of the Brascamp--Lieb functional, we may assume without loss of generality that $\int_{\R^{n_j}}f_j=1$ and $\int_{\R^{n_j}}g_j=1$ for each $j\in\{1,...,m\}$. Given $\tau>0$, we define an anisotropic heat kernel $\txtbf{g}_\tau$ as follows: $$\txtbf{g}_{\tau}(x):=(g_{j,\tau}(x))_{j=1}^m=(\tau^{-n_j/2}\exp(-\pi\tau^{-1}\langle A_jx,x\rangle))_{j=1}^m.$$ Observe that for all $\tau>0$,
\begin{align*}
    \tau^{n_j/2}f_j\ast g_{\tau,j}(L_j(\tau^{1/2} x))&=\int_{\R^{n_j}}f_j(z)\exp(-\pi\tau^{-1}\langle A_j(\tau^{1/2} L_j(x) -z),\tau^{1/2} L_j(x) -z\rangle)dz\\
    &=\int_{\R^{n_j}}f_j(z)\exp(-\pi\vert A_j^{1/2} L_j(x)\vert^2+2\pi\tau^{-1/2}\langle A_jL_j(x),z\rangle -\pi\tau^{-1}\vert z\vert^2)dz\\
    &\underset{\tau\rightarrow\infty}{\longrightarrow}\exp(-\pi\vert A_j^{1/2} L_j(x)\vert^2)\int_{\R^{n_j}}f_j(z)dz=g_j\circ L_j(x).
\end{align*}
Combining this limit with \eqref{eq:ball1} via the dominated convergence theorem then gives us that
\begin{align*}
 \BL(\textbf{L},\textbf{p};\txtbf{f})=\int_{\R^n}\prod_{j=1}^mf_j\circ L_j(x)^{p_j}dx&\leq\int_{\R^n}\prod_{j=1}^m(f_j\ast g_{j,\tau})\circ L_j(x)^{p_j}dx\\
&= \tau^{n/2}\int_{\R^n}\prod_{j=1}^m(f_j\ast g_{j,\tau})\circ L_j(\tau^{1/2} x)^{p_j}dx\\
&=\int_{\R^n}\prod_{j=1}^m \tau^{p_jn_j/2}(f_j\ast g_{j,\tau})\circ L_j(\tau^{1/2} x)^{p_j}dx\\
&\underset{\tau\rightarrow\infty}{\longrightarrow}\int_{\R^n}\prod_{j=1}^m g_j\circ L_j(x)^{p_j}dx\\
&=\BL(\textbf{L},\textbf{p};\textbf{g}).\\
\end{align*}
Taking the supremum over all $\txtbf{f}$ then implies that $$\BL(\txtbf{L},\txtbf{p})=\sup_{\txtbf{f}}\BL(\txtbf{L},\txtbf{p};\txtbf{f})\leq\BL(\txtbf{L},\txtbf{p};\txtbf{g})\leq\BL(\txtbf{L},\txtbf{p}),$$
hence $\textbf{g}$ is an extremiser as claimed.
\end{proof}
Observing this equivalence between heat-flow monotonicity and extremisability, it is then natural to consider whether or not, for some suitable choice of nonlinear Brascamp--Lieb datum, there exists a variable coefficient heat-flow for which the associated nonlinear Brascamp--Lieb functional is monotone, and if so whether or not this would imply that the inequality holds with finite constant. Indeed, this is the approach that was taken in both \cite{bramati2019brascamp} and \cite{carlen2004sharp} to prove nonlinear Brascamp--Lieb inequalities in certain geometrically symmetric settings, so it is then plausible to suppose that a generalisation of such a monotonicity property could hold more broadly. 
The inequalities (\ref{eq:ball1}) and (\ref{eq:ball2}) express an amenability of the linear Brascamp--Lieb functional to two distinct processes, the former being smoothing via heat-flow and the latter being localisation via gaussian extremisers, as we may think of $h_j^x$ as an essentially truncated version of $f_j$, whose essential support is contained within a ball centred at $L_j(x)$. The proof strategy of \cite{bennett2020nonlinear} was to find a nonlinear version of \eqref{eq:ball2} that would serve as a way to bound the left-hand side of \eqref{eq:nbras} above by a supremum of similar integrals over smaller domains, so that if used recursively this would form the engine of an induction-on-scales argument. In this paper we establish a corresponding nonlinear version of \eqref{eq:ball1}, although admittedly we only establish heat-flow near-monotonicity for small times. At its core it is still an induction-on-scales argument, where we tightly bound the possible error between times that are close to one another so that when we string these inequalities together we are left with an error that is well-controlled.\par
This paper was funded by a grant from the EPSRC, and forms part of the author's PhD thesis. She would like to thank her supervisor Jonathan Bennett for his guidance and patience, without which this work could not have been produced, Fred Lin for his many insightful comments, and the anonymous referee for their thorough and detailed feedback as well as various helpful suggestions that have greatly improved the quality of this work.
\section{Main Section}
\subsection{Setup and Notation}\label{setup section}
In this paper, we shall consider fixed Riemannian manifolds $M$, $M_1$,...,$M_m$ (without boundary) of dimensions $n$, $n_1$,...,$n_m$. We shall refer to the exponential map based at a point $x$ on a manifold $N$ by $e_x:T_xN\rightarrow N$. The injectivity radius of a point $x\in N$ is the largest number $\rho_x>0$ such that $e_x$ restricts to a diffeomorphism on the ball of radius $\rho_x$ around $0\in T_x N$. We shall assume that the manifolds we consider have bounded geometry, by which we mean that they have injectivity radii uniformly bounded below, by a number $\rho>0$ which we now fix, and also that both the Riemannian curvature tensor and its first-order covariant derivatives have their norms uniformly bounded from above. For further reading about analysis on manifolds with bounded geometry, see \cite{schick2001manifolds,hebey2000nonlinear}\par
We shall refer to a ball centred at a point $x\in M$ of radius $r>0$ on a manifold $M$ by $U_r(x)$, and refer to a ball centred at a point $v\in T_x M$ of radius $r>0$ by $V_r(v)$ (the tangent space that this ball belongs to should always be clear from context, if it is not stated explicitly). We shall consider submersions $B_j:M\rightarrow M_j$ $(j\in\{1,...,m\})$ that may be viewed as fixed for the entirety, and are assumed to have at least $L^\infty$ bounded derivative maps. Noting this, we shall denote a ball centred at $z\in M_j$ of radius $r\Vert dB_j\Vert_{L^\infty}$ by $U_{r,j}(z)$, and similarly a ball centred at $w\in T_zM_j$ of radius $r\Vert dB_j\Vert_{L^\infty}$ by $V_{r,j}(w)$, simply for the technical reason that then $\bigcap_{j=1}^mdB_j(x)^{-1}(V_{r,j}(0))\subset V_r(0)$, a property that shall prove to be useful later on. We shall also make use of a fixed parameter $\gamma\in(0,1)$ close to $1$. The exact choice of value here is not particularly important, the reader may take $\gamma$ to be $0.9$, say, however we refrain from doing this for the sake of clarity and good book-keeping.\par

We shall always use a single bar to denote a finite dimensional norm, usually a $2$-norm, and double bars to denote an infinite dimensional norm, which we shall always specify with a subscript. In the case where we are taking a norm of a matrix, we shall assume that this is the induced 2-norm unless stated otherwise. Since we may assume that the underlying dimensions, exponents, manifolds, and the nonlinear Brascamp--Lieb datum $(\txtbf{B},\txtbf{p})$ are all fixed throughout, we shall use the relation $A\lesssim B$ to denote that there exists a constant $C>0$ depending only on these objects such that $A\leq C B$, and the relation $A\simeq B$ to denote that $A\lesssim B\lesssim A$. Similarly, if $y$ is some variable, $Q$ is a normed space valued function of $y$, and $f$ is a real valued function of $y$, then we shall use the the notation $Q(y)=\mathcal{O}(f(y))$ to denote that $\Vert Q(y)\Vert\lesssim f(y)$.
\subsection{Statements of Results}\label{results section}

Before we state our nonlinear version of (\ref{eq:ball1}), we must first define our `heat-flow'. The construction thereof is rather involved, however the resulting flow operator $H_{x,\tau,j}$ may nonetheless be written essentially as a convolution with a gaussian kernel $G_{x,\tau,j}:T_{B_j(x)}M_j\rightarrow\R$, the key properties of which we now state as a proposition.
\begin{prop}\label{Gprop}
Suppose that $(\txtbf{B},\txtbf{p})$ is a $C^2$ nonlinear Brascamp--Lieb datum such that \\$\Vert\txtbf{dB}\Vert_{C^1},\Vert\BL(\txtbf{dB},\txtbf{p})\Vert_{L^\infty}\lesssim 1$. Then, there exists an $\epsilon>0$ such that, for $\tau>0$ sufficiently small, there exists a smooth family of $m$-tuples of gaussians $\txtbf{G}_{x,\tau}:=(G_{x,\tau,j})_{j=1}^m$ parametrised by $x\in M$ satisfying the following properties:
\begin{enumerate}
    \item Each gaussian $G_{x,\tau,j}$ is of unit mass and is defined by a corresponding $\tau$-dependent positive definite matrix $A_{\tau,j}(x)$, in the sense that $$G_{x,\tau,j}(z):=\tau^{-n_j}\det(A_{\tau,j}(x))^{1/2}\exp(-\pi\tau^{-2}\langle A_{\tau,j}(x)z,z\rangle).$$
    \item $\txtbf{G}_{x,\tau}$ is a $\tau^\epsilon$-near extremiser for the datum $(\txtbf{dB}(x),\txtbf{p})$.
    \item $\Vert A_{\tau,j}\Vert_{C^1},\Vert \det A_{\tau,j}\Vert_{C^1},\Vert A_{\tau,j}^{-1}\Vert_{L^\infty}\leq\tau^{-\epsilon}$ for all $j\in\{1,...,m\}$.
\end{enumerate}
\end{prop}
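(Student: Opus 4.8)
The plan is to build the family $\txtbf{G}_{x,\tau}$ by first constructing, for each $x\in M$, a genuine near-extremising gaussian input for the \emph{frozen} linear datum $(\txtbf{dB}(x),\txtbf{p})$, and then controlling how this choice can be made to vary smoothly in $x$ and degrade gracefully in $\tau$. The starting point is Lieb's Theorem~\ref{lieb}, which guarantees that for any $\delta>0$ and any feasible datum there is a gaussian input achieving at least $(1-\delta)\BL$; the issue, as flagged in the introduction, is that Lieb's theorem is purely existential and gives no quantitative control on the matrices or on their dependence on the underlying linear maps. So the real content here is the ``well-quantified Lieb theorem'' alluded to as the forthcoming Theorem~\ref{impefflieb}: a map $\txtbf{Y}_\delta:\mathcal{F}\to\mathcal{G}$ sending a feasible datum to a $\delta$-near extremiser with $\Vert\txtbf{Y}_\delta\Vert_{W^{1,\infty}}$ blowing up at most polynomially in $\delta^{-1}$. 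Granting that, I would set $\delta=\tau^{\epsilon_0}$ for a small $\epsilon_0>0$ to be fixed at the end, and define $A_{\tau,j}(x)$ to be (a unit-mass normalisation of) $Y_{\delta,j}(\txtbf{dB}(x))$, with the explicit $\tau$-rescaling in property~(1) inserted so that the resulting kernel $G_{x,\tau,j}$ is an honest approximate-identity-type gaussian at scale $\tau$.

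The steps, in order, would be: (i) Record that the hypotheses $\Vert\txtbf{dB}\Vert_{C^1}<C$ and $\Vert\BL(\txtbf{dB},\txtbf{p})\Vert_{L^\infty}<C$ put the whole family $\{\txtbf{dB}(x):x\in M\}$ inside a fixed compact subset of $\mathcal{F}$ on which $\BL(\cdot,\txtbf{p})$ is bounded (using Theorem~\ref{BrasCont} and the local boundedness of \cite{bennett2018stability}); in particular the near-extremising matrices produced by $\txtbf{Y}_\delta$ have norms and inverse-norms bounded in terms of $C$ and $\delta^{-1}$. (ii) Compose with $\txtbf{dB}:M\to\mathcal{F}$, which is $C^1$ with $C^1$-norm controlled by $\Vert\txtbf{dB}\Vert_{C^1}$; the chain rule then gives $\Vert A_{\tau,j}\Vert_{C^1(M)}\lesssim \Vert\txtbf{Y}_\delta\Vert_{W^{1,\infty}}\,(1+\Vert\txtbf{dB}\Vert_{C^1})\lesssim \delta^{-c}$ for some fixed $c$, and similarly for $\det A_{\tau,j}$ since the determinant is smooth with derivative controlled on the relevant compact matrix set. (iii) Choose $\epsilon>0$ small enough that $\delta=\tau^{\epsilon_0}$ simultaneously yields $\delta\leq\tau^\epsilon$ (giving property~(2): $\txtbf{G}_{x,\tau}$ is a $\tau^\epsilon$-near extremiser, the rescaling in~(1) being irrelevant to the $\BL$-ratio by scale-invariance of the linear functional) and $\delta^{-c}\leq\tau^{-\epsilon}$ (giving property~(3)); concretely $\epsilon=\epsilon_0/c$ with $\epsilon_0$ chosen so that $\epsilon<\epsilon_0$, and then ``$\tau$ sufficiently small'' absorbs the implicit constants. (iv) Smoothness of the family $\txtbf{G}_{x,\tau}$ in $x$ follows from smoothness of $\txtbf{Y}_\delta$ wherever it can be arranged to be smooth, or is obtained by a standard mollification of $x\mapsto A_{\tau,j}(x)$ at a scale comparable to $\tau$, which costs only another factor $\tau^{-O(1)}$ in the $C^1$ bound and hence is harmless after shrinking $\epsilon$.

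The main obstacle is unquestionably step (i)--(ii) packaged into Theorem~\ref{impefflieb}: turning the soft compactness/continuity statements about $\BL(\cdot,\txtbf{p})$ near a non-simple (hence possibly non-extremisable) datum into a \emph{quantitative, Lipschitz-in-the-datum} selection of near-extremisers with a polynomial rate. When the frozen datum happens to be simple one can just invoke Valdimarsson's Theorem~\ref{vald} and take $\txtbf{Y}_\delta=\txtbf{Y}$ with $\delta=0$, so the difficulty is entirely concentrated at the non-simple locus, where the near-extremising gaussians can run off to the boundary of the positive-definite cone as $\delta\to0$ (reflecting the critical-subspace splitting). The strategy for the forthcoming theorem — which I would only cite here — is to exploit that splitting structure: decompose along orthogonal pairs of critical subspaces into simple pieces, apply the smooth map $\txtbf{Y}$ on each simple piece, and reassemble, tracking how the gluing parameters must be sent to $0$ or $\infty$ at a controlled polynomial rate in $\delta$ to recover near-optimality while keeping $W^{1,\infty}$ control. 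Everything downstream of that — the chain-rule bookkeeping, the choice of exponents, the rescaling normalisation, and the mollification for smoothness — is routine and is what the four enumerated conclusions are designed to record cleanly for use in the subsequent monotonicity argument.
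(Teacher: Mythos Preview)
Your reduction to Theorem~\ref{impefflieb} and the subsequent chain-rule bookkeeping in steps (i)--(iii) are correct and match the paper's architecture. Two substantive differences are worth flagging.

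First, your speculated proof of Theorem~\ref{impefflieb} via decomposition along critical subspaces and reassembly from Valdimarsson's smooth map on simple pieces is \emph{not} what the paper does, and it is not clear such a gluing could be made to yield polynomial $W^{1,\infty}$ bounds uniformly over a compact set of data (the critical-subspace lattice can vary quite badly). The paper's argument is more elementary and robust: it starts from the already-known but possibly discontinuous effective Lieb selection $\txtbf{Y}_\delta^0$ of \cite{bennett2020nonlinear} (Theorem~\ref{efflieb}), discretises $\Omega$ on a grid of width $\sim\delta^{1/\theta}$, uses the H\"older continuity of $\BL(\cdot,\txtbf{p})$ (Proposition~\ref{BLholder}) to propagate near-extremality from grid points to their cubes, and then averages the grid values against a smooth partition of unity. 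The crucial observation making the average work is that Ball's inequality forces the \emph{harmonic} sum $(\txtbf{A}_1^{-1}+\txtbf{A}_2^{-1})^{-1}$ of a $\delta_1$- and a $\delta_2$-near extremiser to be a $(\delta_1+\delta_2)$-near extremiser; since only boundedly many cubes overlap, the averaged map is an $O(\delta)$-near extremiser with the desired $C^1$ control. This avoids any structural analysis of the non-simple locus.

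Second, your construction $A_{\tau,j}(x)=Y_{\tau^{\epsilon_0},j}(\txtbf{dB}(x))$ would indeed satisfy properties (1)--(3) of Proposition~\ref{Gprop}, but the paper builds something strictly richer: after globalising $\txtbf{Y}_\delta$ over $M$ via local trivialisations and another harmonic average to obtain $\txtbf{a}_\tau(x)$, it sets $G_{x,\tau,j}:=\Asterisk_{k=1}^\infty g_{x,2^{-k/2}\tau,j}$, an infinite convolution over dyadic scales, so that $A_{\tau,j}(x)=\bigl(\sum_{k\geq1}2^{-k}a_{2^{-k/2}\tau,j}(x)^{-1}\bigr)^{-1}$. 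This extra structure buys the exact identity $G_{x,\tau,j}\ast g_{x,\tau,j}=G_{x,2^{1/2}\tau,j}$, which is the engine of the induction-on-scales step in Proposition~\ref{timestep}; your simpler kernel would not enjoy this and would leave a gap downstream, even though it suffices for the proposition as stated.
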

Construction of this $G_{x,\tau,j}$ is carried out in Section \ref{impeffliebsection}, from which we observe that property (1) holds immediately; see the end of Section \ref{defG} and the remark after Lemma \ref{abounds} for the proof of properties (2) and (3) respectively. We may now define the corresponding flow operator, wherein we include some truncation to allow us to map locally to the tangent space on which $G_{x,\tau,j}$ is defined.
\begin{align*}
    H_{x,\tau,j}&:L^1(M_j)\rightarrow L^1(U_{\rho-\tau^{\gamma}}(B_j(x)))\\
    H_{x,\tau,j}f_j(z)&:=\int_{U_{\tau^{\gamma},j}(z)}f_j(w)G_{x,\tau,j}(e_{B_j(x)}^{-1}(z)-e_{B_j(x)}^{-1}(w))dw
\end{align*}
We now state our near-monotonicity result, which is the main theorem of this paper.
\begin{thm}[Nonlinear Ball's Inequality]\label{NBall1}
 Suppose that $(\txtbf{B},\txtbf{p})$ is a $C^2$ nonlinear Brascamp--Lieb datum such that $\Vert \txtbf{dB}\Vert_{C^1}, \Vert\BL(\txtbf{dB},\txtbf{p})\Vert_{L^\infty}\lesssim 1$.\par
 Let $U\subset M$ be an open subset of $M$ separated from $\partial M$ (i.e. $\dist(U,\partial M)>0$) such that, for each $j\in\{1,...,m\}$, $B_j(U_j)\subset M_j$ is also separated from $\partial M_j$. Then, there exists a $\beta>0$ such that for $\tau>0$ sufficiently small, for all non-negative $f_j\in L^1(M_j)$,
\begin{align}
    \int_U\prod_{j=1}^mf_j\circ B_j(x)^{p_j}dx\leq (1+\tau^{\beta})\int_{U+U_{\tau^\gamma}(0)}\prod_{j=1}^m H_{x,\tau,j}f_j\circ B_j(x)^{p_j}dx, \label{eq:Nball1}
\end{align}
where $U+U_{\tau}(0)$ denotes the $\tau$-neighbourhood of $U$.
\end{thm}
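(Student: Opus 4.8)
The plan is to follow the induction-on-scales philosophy of \cite{bennett2020nonlinear}, but rather than iterating a localisation step built on a nonlinear analogue of \eqref{eq:ball2}, I would iterate a single \emph{heat-flow step} built on Ball's inequality \eqref{eq:ball1} applied fibrewise. The key observation is that \eqref{eq:ball1} is really a statement at the level of a single Brascamp--Lieb datum comparing the functional at $\txtbf{f}$ to the functional at $\txtbf{f}\ast\txtbf{g}$; if at each point $x\in M$ we freeze the linearised datum $(\txtbf{dB}(x),\txtbf{p})$ and use the gaussian $\txtbf{G}_{x,\tau}$ from Proposition \ref{Gprop} as the kernel, then \eqref{eq:ball1} (in the near-extremiser form, which costs a factor $1+\mathcal{O}(\tau^\epsilon)$ since $\txtbf{G}_{x,\tau}$ is only a $\tau^\epsilon$-near extremiser rather than an exact one) gives pointwise in $x$ a comparison between the integrand $\prod_j f_j\circ B_j(x)^{p_j}$ and a smoothed integrand. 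First I would reduce the scale by a fixed factor: choose $\tau$ small, let $N\simeq \log(1/\tau)$ or a power thereof, and write $\tau = \prod_{k=1}^N \tau_k$ with each $\tau_k$ comparable so that the composition $H_{x,\tau_N}\cdots H_{x,\tau_1}$ (after correcting basepoints via the exponential maps) approximates $H_{x,\tau}$; the precise bookkeeping of how the individual small-time kernels compose into the single kernel $G_{x,\tau,j}$ is deferred to the sections cited after Proposition \ref{Gprop}.

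The heart of the argument is the \emph{single-step inequality}: for $s>0$ sufficiently small,
\begin{align*}
    \int_U\prod_{j=1}^m f_j\circ B_j(x)^{p_j}\,dx \leq (1+\mathcal{O}(s^{\epsilon}))\int_{U+U_{s^\gamma}(0)}\prod_{j=1}^m H_{x,s,j}f_j\circ B_j(x)^{p_j}\,dx.
\end{align*}
To prove this I would work locally: cover $U$ by balls $U_{s^\gamma}(x_0)$, and on each such ball freeze the datum at $x_0$. On the frozen ball the map $B_j$ is, up to an error of size $\mathcal{O}(s^{2\gamma})\Vert \txtbf{dB}\Vert_{C^1}$ in $C^0$ (here the $C^2$ hypothesis enters, via Taylor's theorem in normal coordinates), an affine map $w\mapsto B_j(x_0)+dB_j(x_0)(w-x_0)$ expressed through $e_{B_j(x_0)}$. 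I would then apply the \emph{linear} Ball inequality \eqref{eq:ball1} for the datum $(\txtbf{dB}(x_0),\txtbf{p})$ with the near-extremising gaussian $\txtbf{G}_{x_0,s}$: this yields $\BL(\txtbf{dB}(x_0),\txtbf{p};\txtbf{f}^{x_0})\leq (1-s^\epsilon)^{-1}\BL(\txtbf{dB}(x_0),\txtbf{p};\txtbf{f}^{x_0}\ast\txtbf{G}_{x_0,s})$, where $\txtbf{f}^{x_0}$ denotes the pullback of the $f_j$ to the tangent spaces. Unwinding the definition of $\BL(\cdots;\cdot)$ as a ratio and using that the denominators $\int f_j$ are unchanged by convolution, this is exactly the local version of \eqref{eq:Nball1} with the \emph{linear} flow replaced by convolution against $G_{x_0,s,j}$. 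The truncation in the definition of $H_{x,s,j}$ (integration over $U_{s^\gamma,j}(z)$ only) is harmless here because $G_{x_0,s,j}$ has essential width $\simeq s$, so restricting the convolution to radius $s^\gamma$ (with $\gamma<1$) discards only a tail of mass $\mathcal{O}(\exp(-c s^{2\gamma-2}))$, which is superpolynomially small in $s$ and absorbed into the $1+\mathcal{O}(s^\epsilon)$ factor; the enlargement of the domain from $U$ to $U+U_{s^\gamma}(0)$ accounts exactly for this truncation radius and the basepoint-shift.

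To pass from the frozen-datum inequality on a ball to the genuine inequality on that ball I would control three errors: (i) replacing $B_j$ by its affine approximation and $e_{B_j(x_0)}$-coordinates by $e_{B_j(x)}$-coordinates costs $\mathcal{O}(s^{2\gamma})$ in the arguments of the $f_j$ (no, wait—the $f_j$ are merely $L^1$ and not continuous, so this must instead be handled as in \cite{bennett2020nonlinear}: the affine error is absorbed by slightly fattening the convolution kernel, i.e. $G_{x,s,j}$ convolved with a bump of width $\mathcal{O}(s^{2\gamma})$ is still a near-extremiser of comparable quality since $2\gamma>1$ forces this perturbation to be negligible relative to the width $s$); (ii) replacing $\BL(\txtbf{dB}(x),\txtbf{p})$ by $\BL(\txtbf{dB}(x_0),\txtbf{p})$ across the ball costs $\mathcal{O}((s^\gamma)^{\theta})$ by the local Hölder continuity of the Brascamp--Lieb constant (Theorem \ref{BrasCont} and its Hölder refinement); (iii) the fact that $\txtbf{G}_{x_0,s}$ has $C^1$ norm only $\mathcal{O}(s^{-\epsilon})$ (property (3) of Proposition \ref{Gprop}) means the kernel varies slowly enough in $x$ that patching the local estimates via a partition of unity subordinate to the cover introduces only an $\mathcal{O}(s^{\epsilon'})$ overlap error. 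Summing the local inequalities over the $\mathcal{O}(s^{-n\gamma})$ balls in the cover and using that each point lies in $\mathcal{O}(1)$ of them gives the single-step inequality with a combined loss $1+\mathcal{O}(s^{\beta_0})$ for some $\beta_0>0$ depending on $\epsilon,\theta,\gamma,n$. Finally, chaining $N\simeq s^{-\beta_0/2}$—better, $N$ chosen so that $\tau=s^N$ and $N s^{\beta_0}\to 0$, e.g. $s=\tau^{1/N}$ with $N=\lceil \tau^{-\beta_0/2}\rceil$—steps of size $s$ from scale $0$ (really from an arbitrarily fine scale, taking a limit and using dominated convergence exactly as in the linear lemma above to start the flow) up to scale $\tau$, the accumulated error is $\prod_{k=1}^N(1+\mathcal{O}(s^{\beta_0})) = 1+\mathcal{O}(N s^{\beta_0}) = 1+\mathcal{O}(\tau^\beta)$ for a suitable $\beta>0$, and the composed operators telescope (modulo the basepoint-correction bookkeeping from Proposition \ref{Gprop}) into $H_{x,\tau,j}$, yielding \eqref{eq:Nball1}.

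\textbf{Main obstacle.} The serious difficulty is not any single estimate but the \emph{uniformity of the single-step loss as $s\to 0$}: every error must be a genuine positive power of $s$ with an exponent that does not degrade, \emph{and} the composition of $N\to\infty$ steps must not amplify these. This forces the quantitative version of Lieb's theorem (the forthcoming Theorem \ref{impefflieb}) to deliver near-extremisers whose matrix norms and derivatives blow up only like $s^{-\epsilon}$ with $\epsilon$ strictly smaller than $\beta_0$—so that property (3) of Proposition \ref{Gprop} is compatible with the $C^1$-patching in error (iii)—and it forces the Hölder exponent $\theta$ in the regularity of $\BL$ together with the curvature bounds on the manifolds to combine into a net gain. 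Getting all of $\epsilon$ (near-extremiser quality), the $C^1$-control of the kernels, the Hölder exponent $\theta$, and the step count $N$ to fit together into one exponent $\beta>0$ is the crux; it is precisely the place where the ``tightness'' of the induction-on-scales of \cite{bennett2020nonlinear} is indispensable, and where I expect the bulk of the technical work (and the role of Section \ref{impeffliebsection}) to lie.
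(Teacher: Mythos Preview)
Your overall strategy—iterate a single heat-flow step with controlled loss and sum the errors—is the right one, and you have correctly identified that the near-extremiser quality $\tau^\epsilon$ and the $C^1$ control on the kernels (Proposition \ref{Gprop}(3)) are the quantitative inputs. However, there are two genuine gaps.

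First, the iteration arithmetic is broken. You write $\tau=s^N$ and take $N\simeq \tau^{-\beta_0/2}$ steps of fixed width $s$, but convolution of gaussians \emph{adds} variances: $N$ kernels of width $s$ compose to a kernel of width $\sqrt{N}\,s$, not $s^N$. To reach time $\tau$ with equal steps one therefore needs $N\simeq(\tau/s)^2$, and the accumulated loss is $(1+\mathcal{O}(s^{\beta_0}))^N=1+\mathcal{O}(\tau^2 s^{\beta_0-2})$; since every source of error you list (near-extremiser defect $s^\epsilon$, H\"older error $s^{\gamma\theta}$, linearisation error) gives $\beta_0<1$, this diverges as $s\to 0$. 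The paper repairs this by making the single step \emph{relative}: Proposition \ref{timestep} compares the flowed integral at time $\tau$ to that at time $2^{1/2}\tau$, with loss $1+\tau^\beta$ depending on the \emph{current} time. Iterating along the geometric sequence $\tau_k=2^{-k/2}\tau$ then gives $\sum_k\tau_k^\beta\lesssim\tau^\beta$, which converges; the endpoint $\tau_K\to 0$ is handled by Fatou and Lemma \ref{pointwconv}. The definition $G_{x,\tau,j}=\Asterisk_{k=1}^{\infty}g_{x,2^{-k/2}\tau,j}$ is engineered precisely so that $G_{x,\tau,j}\ast g_{x,\tau,j}=G_{x,2^{1/2}\tau,j}$, making this telescoping exact rather than ``modulo bookkeeping''.

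Second, your single-step mechanism—freeze the datum on a ball, apply the linear Ball inequality \eqref{eq:ball1}, then sum over a discrete cover—has a patching problem. Ball's inequality is an inequality over all of $\mathbb{R}^n$, so the right-hand side of each local estimate is a \emph{global} integral; summing over $\mathcal{O}(s^{-n\gamma})$ balls overcounts by exactly that factor. The paper's device is instead a \emph{continuous} gaussian partition of unity: one inserts $1\leq(1+\tau^\eta)^2\BL(\txtbf{dB}(y),\txtbf{p})^{-1}\int_{V_{0.1\tau^\gamma}(0)}\prod_j g_{y,\tau,j}(dB_j(y)v)^{p_j}\,dv$ inside the $y$-integral, swaps the order of integration, switches basepoints (Lemmas \ref{switchH} and \ref{switch}), linearises (Lemma \ref{loconst}), and then applies the \emph{linear Brascamp--Lieb inequality itself}—not Ball's inequality—to the inner integral. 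This cancels the factor $\BL(\txtbf{dB}(x),\txtbf{p})$ exactly and outputs the convolution $G_{x,\tau,j}\ast g_{x,\tau,j}$ with no overcounting, which is what produces the step $\tau\mapsto 2^{1/2}\tau$.
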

Of course, in the euclidean case we may identify our domain with each tangent space, and so (\ref{eq:Nball1}) then takes the following more familiar form:
\begin{align*}
    \int_U\prod_{j=1}^mf_j\circ B_j(x)^{p_j}dx\leq (1+\tau^{\beta})\int_{U+U_{\tau^\gamma}(0)}\prod_{j=1}^m f_j\ast (G_{x,\tau,j}\chi_{U_{\tau^{\gamma},j}(0)})\circ B_j(x)^{p_j}dx.
\end{align*}
which of course implies a non-truncated, genuine heat-flow near-monotonicity statement.
\begin{align*}
    \int_U\prod_{j=1}^mf_j\circ B_j(x)^{p_j}dx\leq (1+\tau^{\beta})\int_{U+U_{\tau^\gamma}(0)}\prod_{j=1}^m f_j\ast G_{x,\tau,j}\circ B_j(x)^{p_j}dx.
\end{align*}
It is often the case that nonlinear Brascamp--Lieb inequalities enjoy sufficient diffeomorphism-invariance that to prove them in manifold settings that one may reduce to the euclidean setting through a partition of unity \cite{bennett2020nonlinear} or limiting argument \cite{duncan2021algebraic}. However, in our case, it is necessary that we work explicitly in the abstract manifold setting, as this inequality unfortunately is not sufficiently diffeomorphism-invariant to be reducible to the euclidean case, even in the case where $M\cong\R^n$.\par
The main upshot of Theorem \ref{NBall1} is that one may use the local-constancy of $H_{x,\tau,j}f_j$ to perturb the argument in the right-hand side of (\ref{eq:Nball1}). This may be done at small scales, as in Corollary \ref{cor1}, which is an improvement of Theorem \ref{bbbfc} in the sense that there is now an explicit dependence between the error factor and the size of the neighbourhood.
\begin{cor}\label{cor1}
If $(\txtbf{B},\txtbf{p})$ is a nonlinear Brascamp--Lieb datum satisfying the same conditions as in Theorem \ref{NBall1}, then there exists a $\beta>0$ such that for each $x_0\in M$ and all $\tau>0$ sufficiently small,
\begin{align*}
\int_{U_{\tau}(x_0)}\prod_{j=1}^mf_j\circ B_j(x)^{p_j}dx\leq(1+\tau^{\beta})\BL(\txtbf{dB}(x_0),\txtbf{p})\prod_{j=1}^m\left(\int_{M_j} f_j\right)^{p_j}
\end{align*}
\end{cor}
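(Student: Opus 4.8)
The strategy is to deduce the local inequality from the near-monotonicity Theorem \ref{NBall1} by specialising to the domain $U = U_\tau(x_0)$ and then using the explicit gaussian structure of the operators $H_{x,\tau,j}$ to bound the right-hand side by a near-optimal Brascamp--Lieb functional. First I would apply Theorem \ref{NBall1} with $U = U_\tau(x_0)$. One has to check the hypotheses: $U_\tau(x_0)$ is separated from $\partial M$ and each $B_j(U_\tau(x_0))$ is separated from $\partial M_j$ for $\tau$ small, which is immediate by the bounded-geometry and bounded-derivative assumptions (and if $x_0$ is genuinely interior the distances are bounded below uniformly once $\tau$ is small). This yields, for some $\beta_0>0$,
\begin{align*}
\int_{U_\tau(x_0)}\prod_{j=1}^m f_j\circ B_j(x)^{p_j}\,dx\leq (1+\tau^{\beta_0})\int_{U_\tau(x_0)+U_{\tau^\gamma}(0)}\prod_{j=1}^m H_{x,\tau,j}f_j\circ B_j(x)^{p_j}\,dx.
\end{align*}
Since $\gamma<1$, the enlarged domain $U_\tau(x_0)+U_{\tau^\gamma}(0)$ is contained in $U_{C\tau^\gamma}(x_0)$ for a suitable constant, so the right-hand integral is over a ball of radius $\simeq\tau^\gamma$ about $x_0$.

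The core of the argument is then a pointwise estimate on the integrand of the right-hand side. Fix $x\in U_{C\tau^\gamma}(x_0)$. By Proposition \ref{Gprop}, $H_{x,\tau,j}f_j(z)$ is (a truncation of) the convolution of $f_j$ with the gaussian $G_{x,\tau,j}$ of unit mass associated to the positive-definite matrix $A_{\tau,j}(x)$, and $\txtbf{G}_{x,\tau}$ is a $\tau^\epsilon$-near extremiser for $(\txtbf{dB}(x),\txtbf{p})$. The plan is to freeze the datum at $x_0$: because $\txtbf{dB}$ is $C^1$ and $|x-x_0|\lesssim\tau^\gamma$, the linear datum $\txtbf{dB}(x)$ differs from $\txtbf{dB}(x_0)$ by $\mathcal{O}(\tau^\gamma)$, and by the local H\"older/continuity results for the Brascamp--Lieb constant (Theorem \ref{BrasCont} and its refinement in \cite{bennett2020nonlinear}), together with the uniform bound $\Vert\BL(\txtbf{dB},\txtbf{p})\Vert_{L^\infty}\leq C$, one gets $\BL(\txtbf{dB}(x),\txtbf{p})\leq(1+\mathcal{O}(\tau^{\gamma\theta}))\BL(\txtbf{dB}(x_0),\txtbf{p})$ for the relevant H\"older exponent $\theta$. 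Hence
\begin{align*}
\BLg(\txtbf{dB}(x),\txtbf{p};\txtbf{A}_\tau(x))\leq\BL(\txtbf{dB}(x),\txtbf{p})\leq(1+\mathcal{O}(\tau^{\gamma\theta}))\BL(\txtbf{dB}(x_0),\txtbf{p}).
\end{align*}
Now I would linearise: on the ball $U_{C\tau^\gamma}(x_0)$ replace each submersion $B_j$ by its linearisation $x\mapsto B_j(x_0)+dB_j(x_0)(e_{x_0}^{-1}(x))$ and replace the Riemannian volume by Lebesgue measure, incurring multiplicative errors $1+\mathcal{O}(\tau^\gamma)$ from the $C^2$-control on $B_j$ and the bounded geometry. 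After these reductions the right-hand side integral is, up to a factor $1+\mathcal{O}(\tau^{\gamma})$, an honest \emph{linear} Brascamp--Lieb integral for the datum $\txtbf{dB}(x_0)$ with inputs $f_j\ast G_{x_0,\tau,j}$ (or a truncation thereof), integrated over $\R^n$; dropping the truncation only increases it. By the definition of the linear Brascamp--Lieb functional and the fact that convolution with a unit-mass gaussian preserves the mass $\int f_j$, this is bounded by
\begin{align*}
\BL(\txtbf{dB}(x_0),\txtbf{p})\prod_{j=1}^m\left(\int_{M_j}f_j\right)^{p_j}.
\end{align*}
Collecting all the multiplicative errors $(1+\tau^{\beta_0})(1+\mathcal{O}(\tau^{\gamma\theta}))(1+\mathcal{O}(\tau^\gamma))$ and choosing $\beta>0$ smaller than $\min(\beta_0,\gamma\theta,\gamma)$ absorbs them into $1+\tau^\beta$ for $\tau$ small, giving the claim.

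The main obstacle I anticipate is the passage from the truncated, manifold-level convolution operator $H_{x,\tau,j}$ to a clean linear Brascamp--Lieb integral — i.e. controlling simultaneously (a) the error from replacing $e_{B_j(x)}^{-1}$ by the linearisation of $B_j$ at $x_0$ (this mixes a change of basepoint $x\to x_0$ with the exponential-map distortion, and one must check that the gaussian $G_{x,\tau,j}$, which concentrates at scale $\simeq\tau$, does not see the $\mathcal{O}(\tau^\gamma)$ discrepancy in a way that blows up — here it is important that $\gamma<1$ so the discrepancy is large compared to the concentration scale, making the truncation to $U_{\tau^\gamma,j}$ essentially harmless), and (b) keeping the $C^1$-growth of $A_{\tau,j}$ from property (3) of Proposition \ref{Gprop}, namely $\Vert A_{\tau,j}\Vert_{C^1}\leq\tau^{-\epsilon}$, from overwhelming the gains: varying $x$ over a $\tau^\gamma$-ball changes $A_{\tau,j}(x)$ by $\mathcal{O}(\tau^{\gamma-\epsilon})$, so one needs $\epsilon<\gamma$, which is why the statement only claims \emph{some} $\beta>0$ rather than an explicit one. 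Everything else is routine perturbation bookkeeping of the kind already developed in the proof of Theorem \ref{NBall1}.
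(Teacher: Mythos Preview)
Your plan is correct and follows essentially the same route as the paper: apply Theorem \ref{NBall1} on the small ball, then use the perturbation lemmas already developed (in the paper these are Lemma \ref{switchH} to pass from $H_{x,\tau,j}$ to $H_{x_0,\tau,j}$ and Lemma \ref{loconst} to replace $B_j$ by its affine approximation $L_j^{x_0}$), and finally apply the linear Brascamp--Lieb inequality at $x_0$ together with mass preservation under convolution. One minor remark: the H\"older-continuity comparison $\BL(\txtbf{dB}(x),\txtbf{p})\leq(1+\mathcal{O}(\tau^{\gamma\theta}))\BL(\txtbf{dB}(x_0),\txtbf{p})$ that you single out is not actually needed, since after linearisation you are applying the linear inequality directly for the datum $\txtbf{dB}(x_0)$ and never compare constants at nearby points; the paper omits this step entirely.
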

We should remark at this point that it is likely one may also be able to derive the above corollary from a careful inspection of the proof of Theorem \ref{bbbfc} in \cite{bennett2020nonlinear}, however we nonetheless include it here given that it is a fairly immediate consequence of Theorem \ref{NBall1}. One may also use Theorem \ref{NBall1} to perturb at large scales, from which one may derive that the finiteness of the optimal constant for the inequality associated to a nonlinear Brascamp--Lieb datum is stable under bounded perturbations of this datum, given certain regularity hypotheses.
\begin{cor}\label{cor2}
Consider two nonlinear Brascamp--Lieb data $(\txtbf{B},\txtbf{p})$ and $(\widetilde{\txtbf{B}},\txtbf{p})$ with domain $M$ and codomains $M_1,...,M_m$. Let $d_{M_j}$ denote the natural distance metric on $M_j$. Suppose the following conditions hold:
\begin{itemize}
    \item $(\txtbf{B},\txtbf{p})$ satisfies that hypotheses of Theorem \ref{NBall1},
    \item The inequality associated with $(\widetilde{\txtbf{B}},\txtbf{p})$ holds with finite constant,
    \item $\sup_{x\in M}(d_{M_j}(B_j(x),\widetilde{B}_j(x)))<\rho$ for each $j\in\{1,...,m\}$.
\end{itemize}
Then, the inequality associated with $(\txtbf{B},\txtbf{p})$ holds with finite constant.
\end{cor}
The trade-off in the above corollary is that that when we perturb the nonlinear Brascamp--Lieb datum at large scales we lose all quantitative information about the optimal constant for the perturbed datum, however it is likely that, under small perturbations, one may be able to derive quantitative bounds; we leave the details of this to the interested reader.
\subsection{Reduction of Theorem \ref{NBall1}}\label{schemsection}
Suppressing the dependence on $U$, let $C(s,t)$ denote the best constant $C\in(0,\infty]$ for the following inequality.
\begin{align}
 \int_{U+U_{s^\gamma}(0)}\prod_{j=1}^m H_{x,s,j}f_j\circ B_j(x)^{p_j}dx\leq C(s,t)\int_{U+U_{t^\gamma}(0)}\prod_{j=1}^m H_{x,t,j}f_j\circ B_j(x)^{p_j}dx\label{eq:schem}
\end{align}
It is easy to see that $C(s,t)$ enjoys the submultiplicative property $C(r,t)\leq C(r,s)C(s,t)$. We claim that this observation together with the following proposition, as well as the forthcoming Lemma \ref{pointwconv}, is sufficient to prove Theorem \ref{NBall1}.
\begin{prop}\label{timestep}
    There exist $\beta,\nu>0$ such that, for all $\tau\in(0,\nu)$, $$C(\tau,2^{1/2}\tau)\leq 1+\tau^{\beta}.$$
\end{prop}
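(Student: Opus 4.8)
The plan is to prove Proposition \ref{timestep} by an induction-on-scales argument at the level of a single time-doubling step, treating the right-hand side of \eqref{eq:schem} as a Brascamp--Lieb-type functional in its own right and exploiting the semigroup structure of the gaussian kernels $G_{x,\tau,j}$. First I would expand the operator $H_{x,2^{1/2}\tau,j}$ using the approximate semigroup identity for anisotropic gaussian convolution: up to the truncation to $U_{\tau^\gamma,j}$ and up to an error controlled by Proposition \ref{Gprop}(3), one has $G_{x,2^{1/2}\tau,j}\approx G_{x,\tau,j}\ast G_{x,\tau,j}$ (more precisely, the matrices $A_{\tau,j}(x)$ should be arranged so that $A_{2^{1/2}\tau,j}(x)$ is the ``sum'' of two copies of $A_{\tau,j}(x)$ in the appropriate sense, which is exactly the rescaling $\txtbf{g}_\tau$ in the linear Ball argument). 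This rewrites the time-$2^{1/2}\tau$ integrand as a convolution of the time-$\tau$ inputs $H_{x,\tau,j}f_j$ against a fresh gaussian, at which point the problem is reduced to a \emph{local} version of linear Ball's inequality \eqref{eq:ball1}, applied with the datum $\txtbf{dB}(x)$ and with $\txtbf{G}_{x,\tau}$ playing the role of the near-extremiser $\txtbf{g}$.

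Second, I would make this reduction quantitative. The linear Ball inequality \eqref{eq:ball1} is sharp only when $\txtbf{g}$ is an exact extremiser; here $\txtbf{G}_{x,\tau}$ is only a $\tau^\epsilon$-near extremiser, so one picks up a multiplicative loss of the form $(1+\mathcal{O}(\tau^\epsilon))$ from this defect. I would isolate this by revisiting the proof of Ball's inequality (the convolution/rearrangement inequality behind \eqref{eq:ball1}) and tracking how the constant degrades when the test gaussian is within a factor $(1-\tau^\epsilon)$ of extremising --- this is essentially a stability statement for the equality case of Ball's inequality, which should follow from the explicit gaussian formula for $\BLg$ and the continuity/smoothness of $\BL$ on $\mathcal{F}$ (Theorems \ref{BrasCont}, \ref{vald}) together with the uniform bounds $\Vert\txtbf{dB}\Vert_{C^1},\Vert\BL(\txtbf{dB},\txtbf{p})\Vert_{L^\infty}\leq C$. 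The truncation errors (replacing $H_{x,\tau,j}f_j$, which is supported near $B_j(x)$, by its convolution against the truncated kernel $G_{x,\tau,j}\chi_{U_{\tau^\gamma,j}(0)}$, and the mismatch of the integration domains $U+U_{s^\gamma}(0)$ versus $U+U_{(2^{1/2}\tau)^\gamma}(0)$) contribute a further $(1+\mathcal{O}(\tau^{\beta'}))$ loss, since the gaussian tails outside a ball of radius $\tau^\gamma$ are superpolynomially small in $\tau$ given the $C^1$ control on $A_{\tau,j}$ from Proposition \ref{Gprop}(3).

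Third --- and this is where the manifold setting genuinely enters --- I would control the error incurred by working with the exponential maps $e_{B_j(x)}$ rather than genuine linear coordinates. Because $B_j$ is only $C^2$ and the base point $B_j(x)$ varies, the pulled-back kernel on $T_{B_j(x)}M_j$ is not exactly a convolution kernel for a fixed linear structure: the metric, the Christoffel symbols, and $dB_j$ all vary at scale $1$, so over a ball of radius $\tau^\gamma$ they are constant up to $\mathcal{O}(\tau^\gamma)$, and the composition of two such local convolutions (time $\tau$ then time $\tau$) against the single time-$2^{1/2}\tau$ convolution differs by $\mathcal{O}(\tau^{\gamma'})$ in an appropriate operator sense. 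Here the bounded-geometry hypotheses (uniform injectivity radius $\rho$, bounded curvature and its covariant derivative) are exactly what make these perturbation estimates uniform in $x\in U$, and this is where the separation of $U$ and $B_j(U)$ from the boundaries is used to ensure the relevant balls stay within the injectivity radius. Assembling all three sources of error gives $C(\tau,2^{1/2}\tau)\leq 1+\mathcal{O}(\tau^{\min(\epsilon,\gamma',\beta')})$, and setting $\beta$ to be any positive number strictly smaller than this minimum, and $\nu$ small enough to absorb the implied constant, yields the claim.

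The main obstacle I expect is the second step: making the stability-of-equality in Ball's inequality quantitative with an explicit power of $\tau$. The inequality \eqref{eq:ball1} is derived from Ball's convolution lemma only under the hypothesis that $\txtbf{g}$ exactly extremises, and there is no routine black box that converts a $\tau^\epsilon$-near-extremiser into a $(1+\tau^{\beta})$-loss in \eqref{eq:ball1}; one has to re-prove Ball's inequality keeping the near-extremiser defect explicit, presumably by writing $\BL(\txtbf{L},\txtbf{p};\txtbf{f})\BL(\txtbf{L},\txtbf{p};\txtbf{G}_{x,\tau})\leq \sup_x\BL(\txtbf{L},\txtbf{p};\txtbf{h}^x)\BL(\txtbf{L},\txtbf{p};\txtbf{f}\ast\txtbf{G}_{x,\tau})$, bounding $\sup_x\BL(\txtbf{L},\txtbf{p};\txtbf{h}^x)\leq\BL(\txtbf{L},\txtbf{p})$, and then dividing through --- the division by $\BL(\txtbf{L},\txtbf{p};\txtbf{G}_{x,\tau})\geq(1-\tau^\epsilon)\BL(\txtbf{L},\txtbf{p})$ is what produces the controlled loss, but one must check this interacts correctly with the rescaling/semigroup step and with the fact that after convolution $\txtbf{f}\ast\txtbf{G}_{x,\tau}$ must again be fed into a near-monotonicity estimate rather than a sharp one. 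Getting the bookkeeping of these nested near-optimalities to close with a single clean power $\tau^\beta$, uniformly over $x\in U$, is the delicate heart of the argument.
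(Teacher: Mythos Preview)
Your high-level picture is right in spirit --- the mechanism is indeed Ball's inequality with a near-extremising gaussian, combined with the semigroup structure of the kernels --- but there is a structural gap in how you propose to apply it. Ball's inequality \eqref{eq:ball1} is a statement about $\int_{\R^n}\prod F_j\circ L_j$ for a \emph{fixed} linear datum $\txtbf{L}$; it compares two integrals over the whole space. The object you must bound, $\int_{U+U_{\tau^\gamma}}\prod_j H_{y,\tau,j}f_j\circ B_j(y)\,dy$, is an integral in which both the maps $B_j$ and the kernels $H_{y,\tau,j}$ vary with the integration variable $y$. There is no single linear datum and no fixed input $\txtbf{F}$ to which Ball can be applied as a black box, and ``applying it at each $x$ with datum $\txtbf{dB}(x)$'' yields an inequality between integrals over $T_xM$, not between the two $U$-integrals in \eqref{eq:schem}. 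Your proposal does not explain how to bridge this.

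The paper resolves the issue by \emph{unfolding} the proof of Ball rather than quoting it. The key move is a gaussian partition of unity: since $\txtbf{a}_\tau(y)$ is a $\tau^\alpha$-near extremiser, the quantity $\BL(\txtbf{dB}(y),\txtbf{p})^{-1}\int_{V_{0.1\tau^\gamma}(0)}\prod_j g_{y,\tau,j}(dB_j(y)v)^{p_j}\,dv$ equals $1+\mathcal{O}(\tau^\eta)$, and inserting it introduces a second integration variable $v$ (equivalently $x=e_y(v)$). After Fubini one has an outer $x$-integral and an inner integral over $y\in U_{0.1\tau^\gamma}(x)$; the switching Lemmas \ref{switchH} and \ref{switch} --- which you do not mention and which are the genuine workhorses --- then freeze all $y$-dependence of the kernel and weight at $x$. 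Only now is the inner integral a bona fide linear Brascamp--Lieb integral with fixed datum $\txtbf{dB}(x)$, to which the \emph{linear Brascamp--Lieb inequality} (not Ball) is applied. The convolution that emerges is $G_{x,\tau,j}\ast g_{x,\tau,j}=G_{x,2^{1/2}\tau,j}$ \emph{exactly}, by the very definition of $G$ as $\Asterisk_{k\geq 1} g_{x,2^{-k/2}\tau,j}$; your approximate identity $G_{x,2^{1/2}\tau,j}\approx G_{x,\tau,j}\ast G_{x,\tau,j}$ is both the wrong decomposition and unnecessary. Finally, the ``main obstacle'' you flag is not one: dividing the full Ball inequality through by $\BLg\geq(1-\tau^\epsilon)\BL$ costs only a factor $(1-\tau^\epsilon)^{-1}$, which is trivial. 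The delicate bookkeeping lies entirely in the switching lemmas and in arranging the partition of unity so that Fubini and linear BL can be applied at the correct stage.
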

\begin{proof}[Proof of Theorem \ref{NBall1} given Proposition \ref{timestep}.]
 Setting $\tau_0=\tau$, define the geometric sequence $\tau_k:=2^{-k/2}\tau_0$ and let $K\in\mathbb{N}$. We can split the constant $C(\tau_K,\tau)$ into pieces that can be dealt with by Proposition \ref{timestep}.
\begin{align*}
	C(\tau_{K},\tau) &\leq C(\tau_{K},\tau_{K-1})C(\tau_{K-1},\tau)\\
   &\leq C(\tau_K,\tau_{K-1})C(\tau_{K-1},\tau_{K-2}) C(\tau_{K-2},\tau)\\
   &\leq...\leq\prod_{k=1}^{K} C(\tau_{k},\tau_{k-1})\leq\prod_{k=1}^{K} (1+\tau_{k}^{\beta})
\end{align*}
Taking logarithms of the above inequality, we obtain that
\begin{align*}
\log(C(\tau_{K},\tau))&\leq\sum_{k=1}^{K}\log(1+\tau_{k}^{\beta})\\
&\leq\sum_{k=1}^{\infty}\tau_{k}^{\beta}=\frac{\tau^{\beta}}{2^{\beta/2}-1}
\end{align*}
It then follows that, making $\tau$ accordingly smaller if necessary, that
$C(\tau_{K},\tau)\leq\exp(\frac{\tau^{\beta}}{2^{\beta/2}-1})\leq(1+\tau^{\beta/2})$. For each $j\in\{1,...,m\}$, let $f_j\in C_0^{\infty}(M_j)$ be a non-negative function. By the forthcoming Lemma \ref{pointwconv}, we know that $H_{x,\tau,j}f_j\circ B_j(x)\rightarrow f_j\circ B_j(x)$ as $\tau\rightarrow 0$ for all $x\in M$, hence we may apply Fatou's lemma, taking the limit as $K\rightarrow\infty$, and the claim then quickly follows from the definition of $C(s,t)$ with $s=\tau_K$ and $t=\tau$, .
\begin{align}
\int_U\prod_{j=1}^mf_j\circ B_j(x)^{p_j}dx&\leq\underset{K\rightarrow\infty}{\liminf}\int_{U+U_{\tau_K^\gamma}(0)}\prod_{j=1}^mH_{x,\tau_K,j}f_j\circ B_j(x)^{p_j}dx\nonumber\\
&\leq\underset{K\rightarrow \infty}{\liminf }C(\tau_K,\tau)\int_{U+U_{\tau^\gamma}(0)}\prod_{j=1}^mH_{x,\tau,j}f_j\circ B_j(x)^{p_j}dx\nonumber\\
&\leq (1+\tau^{\beta/2})\int_{U+U_{\tau^\gamma}(0)}\prod_{j=1}^mH_{x,\tau,j}f_j\circ B_j(x)^{p_j}dx
\end{align}
This implies the theorem since we may extend this inequality by density to general non-negative $f_j\in L^1(M_j)$.
\end{proof}
This initial reduction complete, we now turn our attention to the task of constructing the family of near-extremising gaussians $G_{x,\tau,j}$, but in order to do this we shall first need to establish a slight improvement of the effective version of Lieb's theorem first proven in \cite{bennett2020nonlinear}.

\section{A Regularised Effective Lieb's Theorem}\label{impeffliebsection}

An issue with constructing a suitable heat-flow outside of the case where $(\txtbf{dB}(x),\txtbf{p})$ is simple is that we do not then have a natural choice of gaussian extremiser to use as our heat kernel, in fact, generally speaking $(\txtbf{dB}(x),\txtbf{p})$ may not admit a gaussian extremiser at all. While Lieb's theorem does guarantee the existence of a $\delta$-near gaussian extremiser for any $\delta>0$, i.e. there exists a gaussian input $\txtbf{A}$ such that $\BLg(\txtbf{dB}(x),\txtbf{p};\txtbf{A})\geq(1-\delta)\BL(\txtbf{dB}(x),\txtbf{p})$, it does not offer any quantitative information about this gaussian input. The authors of \cite{bennett2020nonlinear} overcame these problems by establishing an effective version of Lieb's theorem stating that for any given Brascamp--Lieb datum there exists at least one $\delta$-near extremiser that doesn't degenerate too badly as $\delta\rightarrow 0$. We will now give a simplified version of their result.
\begin{thm}[Effective Lieb's theorem\cite{bennett2020nonlinear}]\label{efflieb}
There exists $N\in\N$ depending only on $\txtbf{p},n,n_1,...,n_m$ such that the following holds: For any given $D>0$ there exists $\delta_0>0$ such that for every $\delta \in (0, \delta_0)$ and any feasible datum $(\txtbf{L}, \txtbf{p})$ such that $\BL(\txtbf{L},\txtbf{p}),|\txtbf{L}|\leq D$,
 \begin{align}
     \sup_{\vert A\vert,\vert A^{-1}\vert\leq\delta^{-N}}\BLg(\txtbf{L},\txtbf{p};\txtbf{A})\geq(1-\delta)\BL(\txtbf{L},\txtbf{p}).
 \end{align}
\end{thm}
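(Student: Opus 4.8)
The plan is to run a compactness argument that upgrades Lieb's theorem (Theorem \ref{lieb}) quantitatively, reducing the problem to a single datum via openness of $\mathcal{F}$ and continuity of $\BL$ (Theorem \ref{BrasCont}). First I would fix $D>0$ and work on the compact set $\mathcal{K}_D:=\{\txtbf{L}\in\mathcal{F}:|\txtbf{L}|\leq D,\ \BL(\txtbf{L},\txtbf{p})\leq D\}$; this is a closed and bounded subset of $\bigoplus_j\R^{n_j\times n}$, hence compact, since $\mathcal{F}$ is open and $\BL(\cdot,\txtbf{p})$ is continuous on it, so the superlevel-/sublevel-set description makes $\mathcal{K}_D$ closed in the ambient Euclidean space. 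For each $\txtbf{L}\in\mathcal{K}_D$ and each $\delta>0$, Lieb's theorem guarantees a gaussian input $\txtbf{A}$ with $\BLg(\txtbf{L},\txtbf{p};\txtbf{A})\geq(1-\delta/2)\BL(\txtbf{L},\txtbf{p})$; by rescaling each $A_j\mapsto c_jA_j$ (which leaves the Brascamp--Lieb functional of a gaussian input unchanged, as one sees from the explicit determinantal formula by homogeneity in each $A_j$) one may normalise, say $\det A_j=1$, so that $|A_j|$ and $|A_j^{-1}|$ are controlled by a single quantity, the condition number $\kappa(\txtbf{A}):=\max_j\max(|A_j|,|A_j^{-1}|)$.

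Next I would define, for $\txtbf{L}\in\mathcal{K}_D$ and $R>0$, the truncated quantity
\begin{align*}
\Phi_R(\txtbf{L}):=\sup_{\kappa(\txtbf{A})\leq R}\frac{\BLg(\txtbf{L},\txtbf{p};\txtbf{A})}{\BL(\txtbf{L},\txtbf{p})}
\end{align*}
and observe that $\Phi_R$ is continuous in $\txtbf{L}$ on $\mathcal{K}_D$ (the supremum is over a compact set of $\txtbf{A}$'s and the integrand $\BLg(\txtbf{L},\txtbf{p};\txtbf{A})$ is jointly continuous, being an explicit rational function of the entries with non-vanishing denominator on $\mathcal{F}$; and $\BL(\cdot,\txtbf{p})$ is continuous and bounded below away from $0$ on $\mathcal{K}_D$). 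By Lieb's theorem $\Phi_R(\txtbf{L})\uparrow 1$ pointwise as $R\to\infty$, and each $\Phi_R$ is monotone non-decreasing in $R$. Since $\mathcal{K}_D$ is compact and $\{\Phi_R\}$ is a monotone family of continuous functions increasing pointwise to the constant $1$ (which is continuous), Dini's theorem gives uniform convergence: for any $\eta>0$ there is $R_0$ with $\Phi_{R_0}(\txtbf{L})\geq 1-\eta$ for all $\txtbf{L}\in\mathcal{K}_D$. Applying this with $\eta=\delta/2$ produces a single radius $R_0=R_0(D,\delta)$ such that every $\txtbf{L}\in\mathcal{K}_D$ admits $\txtbf{A}$ with $\kappa(\txtbf{A})\leq R_0$ and $\BLg(\txtbf{L},\txtbf{p};\txtbf{A})\geq(1-\delta/2)\BL(\txtbf{L},\txtbf{p})\geq(1-\delta)\BL(\txtbf{L},\txtbf{p})$.

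It remains to convert the bound $R_0(D,\delta)$ into the polynomial form $\delta^{-N}$ with $N$ depending only on dimensions and exponents and $\delta_0$ depending on $D$. This is the main obstacle: Dini's theorem alone is purely qualitative and gives no rate. To get the polynomial bound I would instead argue by contradiction and a scaling/blow-up analysis: suppose the conclusion fails for some $D$ along a sequence $\delta_k\to 0$ and data $\txtbf{L}_k\in\mathcal{K}_D$ for which no $\txtbf{A}$ with $\kappa(\txtbf{A})\leq\delta_k^{-N}$ achieves $(1-\delta_k)\BL(\txtbf{L}_k,\txtbf{p})$; passing to a subsequence $\txtbf{L}_k\to\txtbf{L}_*\in\mathcal{K}_D$, one examines how the near-extremising gaussians for $\txtbf{L}_k$ must degenerate and quantifies the degeneration rate against the feasibility inequalities (\ref{eq:feas}). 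The key structural input is that the rate at which a $\delta$-near gaussian extremiser can degenerate is governed by the eigenvalue gaps in the feasibility inequalities, and a Łojasiewicz-type / semialgebraic inequality for the (semialgebraic) function $\txtbf{A}\mapsto\BLg(\txtbf{L},\txtbf{p};\txtbf{A})$ on the space of gaussian inputs yields that being within $\delta$ of the supremum forces $\kappa(\txtbf{A})\lesssim\delta^{-N}$ for an $N$ depending only on the combinatorial data $(n,n_j,\txtbf{p})$ through the degree of the relevant polynomials; the uniformity in $\txtbf{L}$ over $\mathcal{K}_D$ (hence the $D$-dependence of $\delta_0$ only) comes from the compactness already established. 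This is precisely the effective Lieb's theorem of \cite{bennett2020nonlinear}, so at this point I would either cite their argument verbatim or, if a self-contained proof is wanted, carry out the semialgebraic-geometry estimate on the determinantal formula for $\BLg$; the delicate part throughout is keeping the exponent $N$ free of $D$, which is exactly what the polynomial-degree bookkeeping in the semialgebraic estimate achieves.
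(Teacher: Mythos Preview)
The paper does not itself prove this theorem; it is quoted from \cite{bennett2020nonlinear} and used as a black box in the proof of Theorem~\ref{impefflieb}. Since you ultimately offer to cite that paper for the hard quantitative step, your proposal lands in the same place as the paper.

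On the substance of your sketch: the Dini-on-$\mathcal{K}_D$ argument is correct and gives the qualitative statement cleanly (your compactness claim is fine, via lower semicontinuity of $\BL$ as a supremum of continuous gaussian functionals), and you rightly flag that it yields no rate. Your proposed upgrade via a \L ojasiewicz/semialgebraic inequality has a genuine gap, however. First, the function $\txtbf{A}\mapsto\BLg(\txtbf{L},\txtbf{p};\txtbf{A})=\prod_j\det(A_j)^{p_j/2}\big/\det\big(\sum_jp_jL_j^*A_jL_j\big)^{1/2}$ is not semialgebraic when the exponents $p_j$ are irrational, so \L ojasiewicz does not apply off the shelf. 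Second, even in the rational case, a \L ojasiewicz inequality near the (possibly non-attained) supremum would carry an exponent that \emph{a priori} depends on the particular datum $\txtbf{L}$ through the geometry of the level sets, and compactness of $\mathcal{K}_D$ does not by itself make \L ojasiewicz exponents uniform over a family; ``polynomial-degree bookkeeping'' is not enough to extract an $N$ depending only on $(n,n_j,\txtbf{p})$. The proof in \cite{bennett2020nonlinear} proceeds quite differently, going through the structure theory of Brascamp--Lieb data (critical subspaces and the associated decompositions, together with Barthe-type duality) and a direct quantitative analysis of how near-extremal gaussian inputs can degenerate; the uniform polynomial exponent comes from that structural argument rather than from general semialgebraic machinery.
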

Recall the definitions of $\mathcal{F}$ and $\mathcal{G}$ from Section \ref{intro}. Theorem \ref{efflieb}, in other words, asserts the existence of a function $\txtbf{Y}_{\delta}^0:\mathcal{F}\rightarrow\mathcal{G}$ such that $\txtbf{Y}_{\delta}^0(\txtbf{L})$ is a $\delta$-near extremiser for $(\txtbf{L},\txtbf{p})$ and both $\Vert\txtbf{Y}_{\delta}^0\Vert_{L^{\infty}}$ and $\Vert(\txtbf{Y}_{\delta}^0)^{-1}\Vert_{L^{\infty}}$ are bounded above by $\delta^{-N}$ (to clarify, $(\txtbf{Y}_{\delta}^0)^{-1}(\txtbf{L})$ refers to the gaussian input whose $j$th entry is the inverse of the $j$th entry of $\txtbf{Y}_{\delta}^0(\txtbf{L})$). It says nothing however about the existence of a smooth, let alone continuous, function with such properties, however our analysis would require $\txtbf{Y}_{\delta}^0$ to be $C^1$ bounded polynomially in $\delta$. The existence of such a regularised version of $\txtbf{Y}_{\delta}^0$ is the content of the following theorem, which is the main result of this section.
\begin{thm}\label{impefflieb}
There exists an $N\in\N$ depending only on $\txtbf{p},n,n_1,...,n_m$ such that the following holds: For all sets $\Omega$ compactly contained in $\mathcal{F}$, there exists a $\nu>0$ such that for all $\delta\in(0,\nu)$, there exists a smooth function $\txtbf{Y}_{\delta}:\Omega\rightarrow\mathcal{G}$ satisfying the following properties:
\begin{enumerate}
    \item For each $\txtbf{L}\in\Omega$, $\txtbf{Y}_{\delta}(\txtbf{L})$ is a $\delta$-near extremiser for $(\txtbf{L},\txtbf{p})$, i.e. \label{impefflieb_p1}$$\BLg(\txtbf{L},\txtbf{p};\txtbf{Y}_{\delta}(\txtbf{L}))\geq(1-\delta)\BL(\txtbf{L},\txtbf{p}),$$
    \item $\Vert\txtbf{Y}_{\delta}\Vert_{C^1(\Omega)},\Vert\txtbf{Y}_{\delta}^{-1}\Vert_{L^{\infty}(\Omega)}\leq \delta^{-N}$. \label{impefflieb_p2}
\end{enumerate}
\end{thm}
In order to ensure property \eqref{impefflieb_p2} in the above theorem holds, we shall need to do some pointwise averaging of $\delta$-near extremisers by using the convenient property that near extremisers are, in a sufficient sense, closed under harmonic addition.
\begin{lem}\label{harmadd}
   Let $(\txtbf{L},\txtbf{p})$ be a feasible Brascamp--Lieb datum and $\delta_1,\delta_2\in(0,1)$. Suppose that $\txtbf{A}_1,\txtbf{A}_2\in\mathcal{G}$ are $\delta_1$-near and $\delta_2$-near extremisers respectively for the datum $(\txtbf{L},\txtbf{p})$, i.e. $\BLg(\txtbf{L},\txtbf{p};\txtbf{A}_i)\geq(1-\delta_i)\BL(\txtbf{L},\txtbf{p})$ for each $i\in\{1,2\}$. Then, their term-wise harmonic addition $(\txtbf{A}_1^{-1}+\txtbf{A}_2^{-1})^{-1}$ is a $(\delta_1+\delta_2)$-near extremiser for $(\txtbf{L},\txtbf{p})$.
\end{lem}
\begin{proof}
Take $\txtbf{A}_1$ and $\txtbf{A}_2$ as above, then by Ball's inequality (Lemma \ref{ballsineq}), we have that
\begin{align}
   && \BLg(\txtbf{L},\txtbf{p};\txtbf{A}_1)\BLg(\txtbf{L},\txtbf{p};\txtbf{A}_2)&\leq\BL(\txtbf{L},\txtbf{p})\BLg(\txtbf{L},\txtbf{p};(\txtbf{A}_1^{-1}+\txtbf{A}_2^{-1})^{-1})\nonumber\\
   &\implies& (1-\delta_1)(1-\delta_2)\BL(\txtbf{L},\txtbf{p})^2&\leq\BL(\txtbf{L},\txtbf{p})\BLg(\txtbf{L},\txtbf{p};(\txtbf{A}_1^{-1}+\txtbf{A}_2^{-1})^{-1})\nonumber\\
  &\implies&  (1-\delta_1)(1-\delta_2)\BL(\txtbf{L},\txtbf{p})&\leq\BLg(\txtbf{L},\txtbf{p};(\txtbf{A}_1^{-1}+\txtbf{A}_2^{-1})^{-1})\nonumber\\
  &\implies&  (1-\delta_1-\delta_2)\BL(\txtbf{L},\txtbf{p})&\leq \BLg(\txtbf{L},\txtbf{p};(\txtbf{A}_1^{-1}+\txtbf{A}_2^{-1})^{-1}) \label{eq:balls}
\end{align}
hence $(\txtbf{A}_1^{-1}+\txtbf{A}_2^{-1})^{-1}$ is a $(\delta_1+\delta_2)$-near extremiser for $(\txtbf{L},\txtbf{p})$ as claimed.
\end{proof}
It is fortunate that the authors of \cite{bennett2020nonlinear} in the same paper establish the H\"older continuity of the Brascamp--Lieb constant as a consequence of their effective Lieb's theorem, since, as it shall turn out, this is an essential ingredient for proving Theorem \ref{impefflieb}.

\begin{prop}[\cite{bennett2020nonlinear}]\label{BLholder}
There exists a number $\theta\in(0,1)$ and a constant $C_0$ depending on the dimensions $(n_j)_{j=1}^m$ and exponents $(p_j)_{j=1}^m$ such that the following holds: Given data $\txtbf{L},\txtbf{L}'$ such that $\vert \txtbf{L}\vert,\vert \txtbf{L}'\vert\leq C_1$ and $\BL(\txtbf{L},\txtbf{p}),\BL(\txtbf{L}',\txtbf{p})\leq C_2$, we then have
\begin{align}
    \vert \BL(\txtbf{L},\txtbf{p})-\BL(\txtbf{L}',\txtbf{p})\vert\leq C_0C_1^{n+\theta(n-1)}C_2^3\vert\txtbf{L}-\txtbf{L}'\vert^{\theta}.\label{eq:BLholder}
\end{align}
\end{prop}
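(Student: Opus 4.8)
The plan is to leverage the fact that, by Lieb's theorem (Theorem \ref{lieb}), $\BL(\cdot,\txtbf{p})$ is a pointwise supremum of the explicit functions $\txtbf{L}\mapsto\BLg(\txtbf{L},\txtbf{p};\txtbf{A})=\prod_{j}\det(A_j)^{p_j/2}\det\bigl(\sum_{j}p_jL_j^*A_jL_j\bigr)^{-1/2}$, one for each gaussian input $\txtbf{A}$. Each of these is a ratio of polynomials in $\txtbf{L}$, hence locally Lipschitz away from the vanishing locus of the denominator, but with a Lipschitz constant that degenerates as $\txtbf{A}$ degenerates. The role of Theorem \ref{efflieb} is precisely that, to recover $\BL(\txtbf{L},\txtbf{p})$ up to the factor $1-\delta$, it is enough to take the supremum over the compact family of $\txtbf{A}$ with $|A_j|,|A_j^{-1}|\leq\delta^{-N}$, on which --- as I explain below --- the increment of $\BLg(\cdot,\txtbf{p};\txtbf{A})$ can be controlled by a constant that is polynomial in $\delta^{-1}$. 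Since a supremum of functions that are Lipschitz with a common constant is again Lipschitz with that constant, comparing $\txtbf{L}$ and $\txtbf{L}'$ through these near-extremising gaussians will give an estimate of the shape $|\BL(\txtbf{L},\txtbf{p})-\BL(\txtbf{L}',\txtbf{p})|\lesssim\delta C_2+\delta^{-K}|\txtbf{L}-\txtbf{L}'|$ with $K$ depending only on the dimensions, and optimising in $\delta$ then produces the Hölder estimate.

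Concretely, assume $C_1,C_2\geq1$ and, by symmetry, $\BL(\txtbf{L},\txtbf{p})\geq\BL(\txtbf{L}',\txtbf{p})$; set $D:=\max(C_1,C_2)$, and for $\delta\in(0,\delta_0(D))$ let $\txtbf{A}$ be the gaussian input supplied by Theorem \ref{efflieb} for $(\txtbf{L},\txtbf{p})$, so $|A_j|,|A_j^{-1}|\leq\delta^{-N}$ and $\BLg(\txtbf{L},\txtbf{p};\txtbf{A})\geq(1-\delta)\BL(\txtbf{L},\txtbf{p})$. As $\BLg(\txtbf{L}',\txtbf{p};\txtbf{A})\leq\BL(\txtbf{L}',\txtbf{p})$ by Lieb's theorem, matters reduce to estimating $\BLg(\txtbf{L},\txtbf{p};\txtbf{A})-\BLg(\txtbf{L}',\txtbf{p};\txtbf{A})$. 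Writing $M(\txtbf{L}):=\sum_{j}p_jL_j^*A_jL_j$, $a:=\det M(\txtbf{L})$ and $b:=\det M(\txtbf{L}')$, I would use the elementary inequality $|a^{-1/2}-b^{-1/2}|\leq|a-b|\bigl(\min(a,b)\max(a,b)^{1/2}\bigr)^{-1}$ together with three ingredients. First, $|M(\txtbf{L})|,|M(\txtbf{L}')|\lesssim C_1^2\delta^{-N}$ and $|M(\txtbf{L})-M(\txtbf{L}')|\lesssim C_1\delta^{-N}|\txtbf{L}-\txtbf{L}'|$, so the bound $|\det X-\det Y|\lesssim\max(|X|,|Y|)^{n-1}|X-Y|$ for $n\times n$ matrices gives $|a-b|\lesssim C_1^{2n-1}\delta^{-Nn}|\txtbf{L}-\txtbf{L}'|$. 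Second, $\lambda_{\min}(A_j)\geq\delta^{N}$ and the scaling identity $\sum_jp_jn_j=n$ force $\prod_j\det(A_j)^{p_j}\geq\delta^{Nn}$. Third --- and here is where the finiteness hypothesis $\BL\leq C_2$ enters --- the identity $\det M=\prod_j\det(A_j)^{p_j}\,\BLg(\cdot,\txtbf{p};\txtbf{A})^{-2}$, combined with $\BLg(\txtbf{L},\txtbf{p};\txtbf{A}),\BLg(\txtbf{L}',\txtbf{p};\txtbf{A})\leq C_2$, upgrades the second point to the crucial lower bound $\min(a,b)\geq\delta^{Nn}/C_2^2$, and also gives $\prod_j\det(A_j)^{p_j/2}\max(a,b)^{-1/2}\leq C_2$. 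Combining the three yields $|\BLg(\txtbf{L},\txtbf{p};\txtbf{A})-\BLg(\txtbf{L}',\txtbf{p};\txtbf{A})|\lesssim C_2^3C_1^{2n-1}\delta^{-2Nn}|\txtbf{L}-\txtbf{L}'|$. I regard this as the crux of the argument; note that comparing the \emph{denominators} directly, rather than differentiating along a segment in $\txtbf{L}$-space, is exactly what lets us dispense with any control on $\BL$ or on the spectrum of $M$ at intermediate data.

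Assembling, $\BL(\txtbf{L},\txtbf{p})-\BL(\txtbf{L}',\txtbf{p})\lesssim\delta C_2+C_2^3C_1^{2n-1}\delta^{-2Nn}|\txtbf{L}-\txtbf{L}'|$, and balancing the two terms with $\delta\simeq\bigl(C_2^2C_1^{2n-1}\bigr)^{\theta}|\txtbf{L}-\txtbf{L}'|^{\theta}$, $\theta:=(2Nn+1)^{-1}\in(0,1)$, gives $\BL(\txtbf{L},\txtbf{p})-\BL(\txtbf{L}',\txtbf{p})\lesssim C_2^{1+2\theta}C_1^{(2n-1)\theta}|\txtbf{L}-\txtbf{L}'|^{\theta}$; since $\theta\leq1$ and $C_1,C_2\geq1$ one has $C_2^{1+2\theta}\leq C_2^3$ and $C_1^{(2n-1)\theta}=C_1^{n\theta+(n-1)\theta}\leq C_1^{n+\theta(n-1)}$, which is the claimed form \eqref{eq:BLholder}. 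In the complementary regime, where the optimal $\delta$ would exceed $\delta_0(D)$ --- i.e. where $|\txtbf{L}-\txtbf{L}'|$ is bounded below by a threshold depending on $C_1,C_2$ --- the inequality is immediate from the a priori bound $|\BL(\txtbf{L},\txtbf{p})-\BL(\txtbf{L}',\txtbf{p})|\leq2C_2$, once one records (from the proof of Theorem \ref{efflieb}) that $\delta_0(D)$ may be taken polynomially bounded below in $D^{-1}$ and chooses $\theta$ correspondingly small. The residual difficulty is therefore essentially bookkeeping: arranging $N$, $\theta$, and the crude inequalities above so that the final constant $C_0$ depends only on the dimensions and exponents, and so that the precise exponents $n+\theta(n-1)$ and $3$ in \eqref{eq:BLholder} are met.
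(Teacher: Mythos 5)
Your argument is correct and follows exactly the route this paper attributes to \cite{bennett2020nonlinear}: Proposition \ref{BLholder} is not reproved here but is recorded as a consequence of the effective Lieb theorem, which is precisely your strategy of comparing $\BL(\txtbf{L},\txtbf{p})$ and $\BL(\txtbf{L}',\txtbf{p})$ through a single well-conditioned near-extremising gaussian input and then optimising in $\delta$. The one point to make explicit is the caveat you already flag yourself: for the complementary regime and the final exponent bookkeeping to close with $C_0$ and $\theta$ depending only on the dimensions and exponents, one must record from the source that the threshold $\delta_0$ in Theorem \ref{efflieb} is bounded below by a fixed power of $D^{-1}$, which is indeed the case there.
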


\begin{proof}[Proof of Theorem \ref{impefflieb}.]
The proof strategy is to locally average via harmonic addition the potentially discontinuous function given by Theorem \ref{efflieb} in such a way that we both preserve its good properties and impose on it some additional regularity. We will be averaging values taken on a discrete lattice in $\Omega$, which we shall now define. Let $\theta\in(0,1)$ be an exponent to be determined later, and let $E\subset\Omega$ be the following discrete grid of points:
\begin{align}
   E:=\Omega\cap\left(\left(\frac{\delta}{100}\right)^{\frac{1}{\theta}}\prod_{j=1}^m\Z^{n_j\times n}\right). 
\end{align}
Now, let $\mathcal{I}$ be an indexing set for $E$ so that we may write $E=\{\txtbf{L}_i\}_{i\in\mathcal{I}}$, then let $\mathcal{Q}:=\{Q_i\}_{i\in\mathcal{I}}$ be a cover of $\Omega$ via axis-parallel cubes of width equal to $(\delta/10)^{\frac{1}{\theta}}$, with each $Q_i$ centred at $\txtbf{L}_i$. One should note as a matter of technicality that we may need to take $\delta$ to be very small for $\mathcal{Q}$ to genuinely be a cover of $\Omega$.\par 
By Theorem \ref{efflieb}, there exists an $N\in\N$ such that for sufficiently small $\delta>0$ there exists a function $\txtbf{Y}^0_{\delta}:\Omega\rightarrow\mathcal{G}$ such that $\Vert\txtbf{Y}^0_{\delta}\Vert_{L^{\infty}(\Omega)},\Vert(\txtbf{Y}^0_{\delta})^{-1}\Vert_{L^{\infty}(\Omega)}\leq \delta^{-N}$ and $\txtbf{Y}^0_{\delta}(\txtbf{L})$ is a $\delta/2$-near extremiser for $(\txtbf{L},\txtbf{p})\in\Omega$. We begin by showing that, for a suitable choice of $\theta$ and provided that $\delta$ is chosen to be sufficiently small, for all $i\in\mathcal{I}$, $\textbf{Y}_\delta^0(\txtbf{L}_i)$ is also a $\delta$-near extremiser for any $(\txtbf{L},\txtbf{p})$ such that $\txtbf{L}\in Q_i\cap\Omega$. By the relative compactness of $\Omega$ and smoothness of the Brascamp--Lieb functional in $\txtbf{L}\in\mathcal{F}$ for a given fixed input, there exists a $\nu_1\in (0,1)$ such that for $\eta\in(0,\nu_1)$ and all $\txtbf{L},\txtbf{L}'\in\Omega$ satisfying $\vert\txtbf{L}-\txtbf{L}'\vert\leq\eta^2$, we have
\begin{align}
    \BLg(\txtbf{L}',\txtbf{p};\txtbf{Y}^0_{\delta}(\txtbf{L}))\geq(1-\eta)\BLg(\txtbf{L},\txtbf{p};\txtbf{Y}^0_{\delta}(\txtbf{L})).\label{eq:impefflieb1}
\end{align}
The presence of the exponent $2$ in the upper bound $\eta^2$ is merely for the purposes of absorbing constants. By Proposition \ref{BLholder}, we may choose $\theta\in(0,1/2)$ such that the following holds: There exists $\nu_2\in(0,1)$ such that for $\eta\in(0,\nu_2)$ and $\vert\txtbf{L}-\txtbf{L}'\vert\leq\eta^{\frac{1}{\theta}}$, we have that 
\begin{align}
    \BL(\txtbf{L},\txtbf{p})\geq(1-\eta)\BL(\txtbf{L}',\txtbf{p}).\label{eq:impefflieb2}
\end{align}
Again we have used some freedom in our choice in $\theta$ to absorb the constants that arise in \eqref{eq:BLholder}. Choose $\delta$ such that $0<\delta\leq\min\{\nu_1,\nu_2,1\}$. For all $i\in\mathcal{I}$ and all $\txtbf{L}\in Q_i\cap\Omega$, since $\vert\txtbf{L}-\txtbf{L}_i\vert<\delta^{\frac{1}{\theta}}/10^{\frac{1}{\theta}}\leq \delta^2/100$, we may apply \eqref{eq:impefflieb1} and \eqref{eq:impefflieb2} together with the fact that  $\txtbf{Y}^0_{\delta}(\txtbf{L}_i)$ is a $\delta/2$-near extremiser for $(\txtbf{L}_i,\txtbf{p})$ to prove the claim.
\begin{align}
     \BLg(\txtbf{L},\txtbf{p};\txtbf{Y}^0_{\delta}(\txtbf{L}_i))&\geq(1-\delta/10)\BLg(\txtbf{L}_i,\txtbf{p};\txtbf{Y}^0_{\delta}(\txtbf{L}_i))\nonumber\\
     &\geq(1-\delta/2)(1-\delta/10)\BLg(\txtbf{L}_i,\txtbf{p})\nonumber\\
     &\geq(1-\delta/2)(1-\delta/10)^2\BLg(\txtbf{L},\txtbf{p})\nonumber\\
     &\geq(1-\delta)\BL(\txtbf{L},\txtbf{p})\label{stabofext}
\end{align}
Now, let $\{\rho_i\}_{i\in\mathcal{I}}$ be a smooth partition of unity subordinate to $\mathcal{Q}$ such that $\Vert \nabla\rho_i\Vert_{L^{\infty}}\lesssim\delta^{-\frac{1}{\theta}}$ (this can easily be constructed by translation and rescaling), and define the function $\txtbf{Y}_{\delta}:\Omega\rightarrow\mathcal{G}$.
\begin{align}
    \txtbf{Y}_{\delta}(\txtbf{L}):=\left(\sum_{i\in\mathcal{I}}\rho_i(\txtbf{L})\txtbf{Y}_{\delta}^0(\txtbf{L}_i)^{-1}\right)^{-1}\label{eq:defofa}
\end{align}
Again, we clarify that inversions are defined component-wise. We claim that, for any $\txtbf{L}\in\Omega$, $\txtbf{Y}_{\delta}(\txtbf{L})$ is an $\mathcal{O}(\delta)$-near extremiser for $(\txtbf{L},\txtbf{p})$. Firstly, by the scale-invariance of the Brascamp--Lieb functional and \eqref{stabofext}, each $\rho_i(\txtbf{L})^{-1}\txtbf{Y}_{\delta}^0(\txtbf{L}_i)$ is a $\delta$-near extremiser for all $(\txtbf{L},\txtbf{p})$ such that $\txtbf{L}\in Q_i\cap\Omega$.

Since we are pointwise only ever summing boundedly many contributions in \eqref{eq:defofa}, by iterating \eqref{eq:balls}, we find that $\txtbf{Y}_{\delta}(\txtbf{L})$ is an $\mathcal{O}(\delta)$-near extremiser for $(\txtbf{L},\txtbf{p})$ (similar observations about the closure of extremisers under harmonic addition were made in \cite{bcct}). We may of course remove the implicit constant here by a simple substitution, so we shall proceed assuming that $\txtbf{Y}_{\delta}(\txtbf{L})$ is a $\delta$-near extremiser for $(\txtbf{L},\txtbf{p})$, for all $\txtbf{L}\in\Omega$.\par
It remains to prove that $\txtbf{Y}_{\delta}$ satisfies the necessary $L^\infty$ and $C^1$ bounds. We shall start with the $L^{\infty}$ bounds. One bound is trivial, namely that
\begin{align*}
   |\txtbf{Y}_{\delta}(\txtbf{L})^{-1}|\leq\max_{i: \txtbf{L}\in Q_i}|\txtbf{Y}_{\delta}^0(\txtbf{L}_i)^{-1}|\leq\delta^{-N}.
\end{align*}
    The other requires the elementary fact that, for all positive-definite matrices $A,B\in\R^{n\times n}$, $$|(A^{-1}+B^{-1})^{-1}|\leq|A|.$$
    This holds since if $B^{-1}\succ 0$, we then have $0\prec A^{-1}\prec A^{-1}+B^{-1}$, so
    $$(A^{-1}+B^{-1})^{-1}\prec A$$
    and the claim then follows. Applying this to $\txtbf{Y}_{\delta}(\txtbf{L})$ then yields the desired bound.
    \begin{align*}
        |\txtbf{Y}_{\delta}(\txtbf{L})|\lesssim\max_{i: \txtbf{L}\in Q_i}|\txtbf{Y}_{\delta}^0(\txtbf{L}_i)|\leq\delta^{-N}.
    \end{align*}
Finally, it remains to prove the $L^{\infty}$ bound on the derivative $\txtbf{dY}_{\delta}$. We use the chain rule to deal with the matrix inversions, apply the above established bounds on $|\txtbf{Y}_{\delta}(\txtbf{L})|$, then apply the triangle inequality to show that the derivative is at most polynomially bounded. Taking some unit vector $W\in\prod_{j=1}^m\R^{n_j\times n}$,
\begin{align*}
    |\txtbf{dY}_{\delta}[\txtbf{L}](W)|&=|\txtbf{Y}_{\delta}(\txtbf{L})\txtbf{d}((\txtbf{Y}_{\delta})^{-1})[\txtbf{L}](W)\txtbf{Y}_{\delta}(\txtbf{L})|\\
    &\leq\delta^{-2N}|\txtbf{d}((\txtbf{Y}_{\delta})^{-1})[\txtbf{L}](W)|\\
    &\leq\delta^{-2N}\sum_{i\in\mathcal{I}}|\nabla\rho_i(\txtbf{L})||\txtbf{Y}_{\delta}^0(\txtbf{L}_i)|\\
    &\lesssim\delta^{-3N-\frac{1}{\theta}}.
\end{align*}
Hence $\Vert \txtbf{dY}_{\delta}\Vert_{L^\infty}\leq\delta^{-3N-\frac{1}{\theta}}$, completing the proof.
\end{proof}
\subsection{Definition of \texorpdfstring{$G_{x,\tau,j}$}{Gxtj}}\label{defG}

We shall now define the gaussian arising in the statement of Theorem \ref{NBall1} using Theorem \ref{impefflieb}. In order to do this, we need to find a way of globally applying Theorem \ref{impefflieb} to our manifold context, and to this end, we define $\mathcal{F}_x$ to be the set of $m$-tuples $\txtbf{L}:=(L_j)_{j=1}^m$ consisting of surjections $L_j:T_xM\rightarrow T_{B_j(x)}M_j$ such that $(\txtbf{L},\txtbf{p})$ is a feasible Brascamp--Lieb datum, and we consider the following set
$$\Omega_x:=\{\txtbf{L}\in\mathcal{F}_x:|\txtbf{L}|,\BL(\txtbf{L},\txtbf{p})< C\},$$
where $C\sim 1$ is some large constant. The manifold $\mathcal{F}_M:=\bigsqcup_{x\in M}\mathcal{F}_x$ then defines a fibre bundle over $M$, with natural projection map $\pi_{\mathcal{F}}:\mathcal{F}_M\rightarrow M$. Furthermore, $\Omega_M:=\bigsqcup_{x\in M}\Omega_x$ defines a fibre sub-bundle of $\mathcal{F}_M$, and, provided we choose $C$ to be sufficiently large, it contains the image of the section $\txtbf{dB}:M\rightarrow\mathcal{F}_M$. Let $\mathcal{U}:=\{U_a\}_{a\in\mathcal{A}}$ be a boundedly overlapping cover of $M$ via small balls of the same radius, let $\{\phi_a:U_a\rightarrow\R^n\}_{a\in\mathcal{A}}$ be a normal atlas and $\{\phi_{j,a}:B_j(U_a)\rightarrow\R^{n_j}\}_{a\in\mathcal{A}}$ be an atlas for $B_j(M)$ consisting of restrictions of normal charts. We may use them to define a system of local trivialisations for $\mathcal{F}_M$.
\begin{align*}
    \psi_a&:\pi^{-1}_{\mathcal{F}_M}(U_a)\rightarrow U\times\mathcal{F}\\
    \psi_a(x,\txtbf{L})&:=(x,(d\phi_{j,a}[B_j(x)]\circ L_j\circ d\phi_a[x]^{-1})_{j=1}^m)
\end{align*}
By our bounded geometry assumptions, the forthcoming Lemma \ref{geolem} implies that the exponential map has bounded first and second derivatives, hence our normal atlases may be chosen such that  for all $a\in\mathcal{A}$, $\psi_a(\pi^{-1}_{\mathcal{F}}(U_a)\cap\Omega_M)\subset U_a\times\Omega$, where $\Omega:=\{\txtbf{L}\in\mathcal{F}:|\txtbf{L}|,\BL(\txtbf{L},\txtbf{p})< 2C\}$. The set $\Omega$ is open and relatively compactly contained in $\mathcal{F}$, therefore there exists a $\txtbf{Y}_\delta:\Omega\rightarrow\mathcal{G}$ as in Theorem \ref{impefflieb} for this choice of $\Omega$. Let $\{\rho_a\}_{a\in\mathcal{A}}$ be a partition of unity subordinate to $M$ with uniformly bounded derivatives, and define the following gaussian input-valued function:  
\begin{align}
    \txtbf{a}_\tau(x)&:=\left(\sum_{a\in\mathcal{A}}\rho_a(x)\left(\txtbf{C}_a(x)^*\txtbf{Y}_{\tau^{\alpha}}\circ\pi_2\circ\psi_a(x, \txtbf{dB}(x))\txtbf{C}_a(x)\right)^{-1}\right)^{-1},\label{eq:ataudef}
\end{align}
where $\txtbf{C}_a(x):=(d\phi_{j,a}(B_j(x)))_{j=1}^m$, $\pi_2$ denotes projection onto the second component, and $\alpha\in (0,1)$ is a small exponent to be later determined, which we shall use to control the blow-up of $\txtbf{a}_\tau$ under various norms. By scale-invariance of the Brascamp--Lieb inequality and applying a change of co-ordinates, we can see that each term of the form $\txtbf{C}_a(x)^*\txtbf{Y}_{\tau^{\alpha}}\circ\pi_2\circ\psi_a(x, \txtbf{dB}(x))\txtbf{C}_a(x)$ in \eqref{eq:ataudef} is a $\tau^\alpha$-near extremiser for $(\txtbf{dB}(x),\txtbf{p})$, therefore applying Lemma \ref{harmadd} yields that $\txtbf{a}_{\tau}(x)$ is a $\mathcal{O}(\tau^\alpha)$-near extremiser for $(\txtbf{dB}(x),\txtbf{p})$. Moreover, following the same reasoning as in the proof of Theorem \ref{impefflieb}, we may derive that $\Vert \txtbf{a}_\tau\Vert_{L^\infty(M)},\Vert \txtbf{a}_\tau^{-1}\Vert_{L^\infty(M)}\lesssim\tau^{-\alpha N}$. In both instances, we may ignore the implicit constants that arise by simply raising the exponent $\alpha$ by some very small amount. We may then define a gaussian $g_{x,\tau,j}:T_{B_j(x)}M_j\rightarrow\R$ as
\begin{align*}
    g_{x,\tau,j}(z)=\tau^{-n_j}\exp\left(-\frac{\pi}{\tau^2}\langle a_{\tau,j}(x)z,z\rangle\right).
\end{align*}
Implicitly, we may view this gaussian as the fundamental solution of the following anisotropic heat equation at time $t=\tau^2$.
\begin{align*}
    \partial_t u(z,t)=\nabla_z\cdot(a_{\tau,j}(x)^{-1}\nabla_z u(z,t))
\end{align*}
At last, we define our gaussian kernel $G_{x,\tau,j}$ as the following infinite convolution.
\begin{align*}
    G_{x,\tau,j}:=\mathlarger{\mathlarger{\Asterisk}}_{k=1}^{\infty}g_{x,2^{-k/2}\tau,j}
\end{align*}
We shall now show that $G_{x,\tau,j}$ is well-defined if $\alpha<2N^{-1}$, where this $N\in\N$ is the one that arises in Theorem \ref{impefflieb}. To see this, we consider the partial convolution 
\begin{align*}
   G^{(K)}_{x,\tau,j}(v):=g_{x,2^{-1/2}\tau,j}\ast...\ast g_{x,2^{-K/2}\tau,j}(v)=\tau^{-n_j}\det(C_K)^{1/2}\exp(-\pi\tau^2\langle C_Kv,v\rangle) ,
\end{align*}
where $C_K:=(\sum_{k=1}^{k=K}2^{-k}a_{2^{-k/2}\tau,j}(x)^{-1})^{-1}$ (this formula may be checked by an application of the Fourier transform). We now just need to show that $C_K$ converges as $K\rightarrow\infty$, since then $G^{(K)}_{x,\tau,j}$ converges pointwise. Let $l\in\N$, then by the fact that $\Vert a_{2^{-k/2}\tau,j}^{-1}\Vert_{L^\infty(M)}\leq 2^{k\alpha N/2}\tau^{-\alpha N}$ for all $k>0$,
\begin{align*}
    |C_{K+l}^{-1}-C_K^{-1}|&\leq\sum_{k=K+1}^{K+l}2^{-k}|a_{2^{-k/2}\tau,j}(x)^{-1}|\\
    &\leq\sum_{k=K+1}^{K+l}2^{-k}2^{k\alpha N/2}\tau^{-\alpha N}\\
    &\leq2^{(\alpha N/2-1)K}\tau^{-\alpha N}\sum_{k=1}^{l}2^{(\alpha N/2-1)k}
\end{align*}
By our choice of $\alpha$, $|C_{K+l}^{-1}-C_K^{-1}|\rightarrow 0$ as $K\rightarrow\infty$ uniformly in $l$, so $C_K^{-1}$ is cauchy, and therefore converges. By continuity of matrix inversion, the limit of $C_K$ then exists provided that $\lim_{K\rightarrow\infty}(C_K^{-1})\in GL_{n_j}(\R)$, and otherwise $C_K$ must be unbounded, since if it were bounded it would admit a convergent subsequence, which would have to converge to the inverse of the limit of $C_K^{-1}$. We therefore only need to check that $C_K$ is bounded, whence $G^{(K)}_{x,\tau,j}\rightarrow G_{x,\tau,j}$ pointwise, which follows from applying \eqref{harmadd} and the $L^\infty$ bound on $\txtbf{a}_\tau$.
\begin{align*}
    |C_K|\lesssim\sum_{k=1}^K2^{-k}|a_{2^{-k/2}\tau,j}(x)|&\leq\sum_{k=1}^\infty2^{(\alpha N/2-1)k}\tau^{-\alpha N}\lesssim_{\alpha,N}\tau^{-\alpha N}<\infty,
\end{align*}
If we denote the limit of $C_K$ by $A_{\tau,j}(x)$, then we may write $G_{x,\tau,j}(z)$ explicitly as $$G_{x,\tau,j}(z)=\tau^{-n_j}\det(A_{\tau,j}(x))^{1/2}\exp(-\pi\tau^{-2}\langle A_{\tau,j}(x)z,z\rangle).$$
Applying Lemma \ref{harmadd} infinitely many times to $\txtbf{A}_{\tau}(x):=(A_{\tau,j}(x))_{j=1}^m$ we see that this gaussian input is an $\mathcal{O}(\tau^{\alpha})$-near extremiser for $(\txtbf{dB}(x),\txtbf{p})$, hence satisfying property (2) of Proposition \ref{Gprop}.
\begin{align*}
    \frac{\BLg(\txtbf{dB}(x),\txtbf{p};\txtbf{A}_{\tau}(x))}{\BL(\txtbf{L},\txtbf{p})}\geq1-\tau^{\alpha}\sum_{k=1}^\infty2^{-k\alpha/2}=1-\tau^{\alpha}(2^{\alpha/2}-1)^{-1}
\end{align*}
Observe that in the case where $(\txtbf{dB}(x),\txtbf{p})$ is simple, we may forego Theorem \ref{impefflieb} and use an exact extremiser for our definition of $g_{x,\tau,j}$, in which case $\txtbf{a}_{\tau}$ is constant in $\tau>0$, and we would then have the identifications $\txtbf{A}_{\tau}=\txtbf{a}_\tau$ and $G_{x,\tau,j}=g_{x,\tau,j}$. Of course, if the reader were to run our argument in the simple case with exact extremisers, then they would need to take care to ensure that these exact extremisers satisfy appropriate $C^1$ boundedness of the type we shall prove for our near-extremisers in the next section.
\section{Gaussian Lemmas}
This section is, for the most part, dedicated to establishing the properties we require of our gaussians $G_{x,\tau,j}$ and $g_{x,\tau,j}$ in order to prove Proposition \ref{timestep}, which, as we have shown in Section \ref{schemsection}, implies Theorem \ref{NBall1}. We need to quantify how these gaussians behave under small perturbations in a number of variables, and for this purpose we shall first need to prove various bounds on the norms of the underlying gaussian input-valued function $\txtbf{A}_\tau$.

\begin{lem}\label{abounds}
For any $\epsilon>0$, provided $\alpha$ is chosen such that $\alpha<\min\{\frac{2}{3N},\frac{\epsilon}{N}\}$, there exists a $\nu>0$ such that for every $\tau\in(0,\nu)$, $\Vert \txtbf{A}_\tau\Vert_{C^1},\Vert\det\txtbf{A}_\tau\Vert_{C^1}, \Vert \txtbf{A}_\tau^{-1}\Vert_{L^{\infty}}\leq\tau^{-\epsilon}$.
\end{lem}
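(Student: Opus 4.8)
The plan is to establish the three bounds for $\txtbf{A}_\tau = (A_{\tau,j})_{j=1}^m$ by exploiting the explicit formula $A_{\tau,j}(x) = \bigl(\sum_{k=1}^\infty 2^{-k} a_{2^{-k/2}\tau,j}(x)^{-1}\bigr)^{-1}$ together with the $C^1$ bounds on $\txtbf{Y}_\delta$ from Theorem \ref{impefflieb}. The $L^\infty$ bound $\Vert \txtbf{A}_\tau^{-1}\Vert_{L^\infty}\lesssim \tau^{-\alpha N}$ has essentially already been obtained in Section \ref{defG} (it is the estimate $|C_K^{-1}| \leq \sum_k 2^{-k} 2^{k\alpha N/2}\tau^{-\alpha N}\lesssim \tau^{-\alpha N}$), and similarly $|A_{\tau,j}| = |C_\infty| \lesssim \tau^{-\alpha N}$; choosing $\alpha < \epsilon/N$ and absorbing the implicit constant by a qualitatively small increase in $\alpha$ gives the $L^\infty$ parts. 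So the real content is the $C^1$ bounds.

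First I would bound $\Vert \txtbf{a}_\tau\Vert_{C^1(M)}$, i.e.\@ the derivative of the function defined in \eqref{eq:ataudef}. Differentiating \eqref{eq:ataudef} via the matrix-inversion chain rule $d(X^{-1}) = -X^{-1}(dX)X^{-1}$, the derivative of $\txtbf{a}_\tau$ is $\txtbf{a}_\tau$ times a sum over $a \in \mathcal{A}$ of terms involving $\nabla\rho_a$ (uniformly bounded), the derivatives of $\txtbf{C}_a$ and of $\pi_2\circ\psi_a(x,\txtbf{dB}(x))$ (uniformly bounded by the bounded-geometry hypotheses and the assumption $\Vert \txtbf{dB}\Vert_{C^1}\leq C$, using Lemma \ref{geolem} for the bounds on the normal charts and the exponential map), the values of $\txtbf{Y}_{\tau^\alpha}$ (bounded by $\tau^{-\alpha N}$), and crucially the derivative $\txtbf{dY}_{\tau^\alpha}$, which by Theorem \ref{impefflieb} is bounded by $\tau^{-\alpha N}$. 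Collecting, one gets $\Vert\txtbf{a}_\tau\Vert_{C^1}\lesssim \tau^{-c\alpha N}$ for some absolute $c \simeq 1$; and again the implicit constant is absorbed into a small bump of $\alpha$.

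Next I would pass from $\txtbf{a}_\tau$ to $\txtbf{A}_\tau$. Differentiating the series $A_{\tau,j}(x)^{-1} = \sum_{k=1}^\infty 2^{-k} a_{2^{-k/2}\tau,j}(x)^{-1}$ term by term, $d(A_{\tau,j}^{-1}) = -\sum_k 2^{-k}\, a_{2^{-k/2}\tau,j}^{-1} (da_{2^{-k/2}\tau,j})\, a_{2^{-k/2}\tau,j}^{-1}$, so using $\Vert a_{s,j}^{-1}\Vert_{L^\infty}, \Vert a_{s,j}\Vert_{C^1} \lesssim s^{-c\alpha N}$ with $s = 2^{-k/2}\tau$ one finds $|d(A_{\tau,j}^{-1})| \lesssim \tau^{-c\alpha N}\sum_k 2^{-k}2^{3ck\alpha N/2}$, which converges provided $\alpha < \tfrac{2}{3cN}$ (this is the role of the hypothesis $\alpha < \tfrac{2}{3N}$, up to the constant $c$), giving $|d(A_{\tau,j}^{-1})|\lesssim \tau^{-c'\alpha N}$. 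Then $dA_{\tau,j} = -A_{\tau,j}(dA_{\tau,j}^{-1})A_{\tau,j}$ together with $|A_{\tau,j}|\lesssim\tau^{-\alpha N}$ yields $\Vert A_{\tau,j}\Vert_{C^1}\lesssim\tau^{-c''\alpha N}$. Finally, for $\Vert\det\txtbf{A}_\tau\Vert_{C^1}$, I would use $|\det A_{\tau,j}|\leq |A_{\tau,j}|^{n_j}\lesssim\tau^{-\alpha N n_j}$ and Jacobi's formula $d(\det A_{\tau,j}) = \adj(A_{\tau,j})^* : dA_{\tau,j}$, bounding $|\adj(A_{\tau,j})|\lesssim |A_{\tau,j}|^{n_j-1}$ exactly as at the end of the proof of Theorem \ref{impefflieb}, to get a polynomial bound $\lesssim\tau^{-c'''\alpha N}$. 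With all exponents of the form $-(\text{const})\alpha N$, choosing $\alpha$ small enough that $(\text{const})\alpha N < \epsilon$ — which is compatible with the stated constraint $\alpha < \min\{\tfrac{2}{3N},\tfrac{\epsilon}{N}\}$ after adjusting $N$ or $c$ as in the earlier proofs — and then taking $\tau < \nu$ small to absorb the leftover implicit constants, gives all three bounds by $\tau^{-\epsilon}$.

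The main obstacle is bookkeeping the accumulated powers of $\tau^{-\alpha N}$ through the two nested inversions (first $\txtbf{a}_\tau$ from the harmonic-mean construction, then $\txtbf{A}_\tau$ from the infinite convolution) while simultaneously ensuring the term-by-term differentiated series for $A_{\tau,j}^{-1}$ still converges — the convergence is exactly what forces the quantitative restriction $\alpha < \tfrac{2}{3N}$, since each differentiation costs an extra factor that grows geometrically in $k$, and one needs $3c\alpha N/2 < 1$ for the geometric series $\sum_k 2^{(3c\alpha N/2 - 1)k}$ to sum. Justifying term-by-term differentiation of the series (uniform convergence of the differentiated series on compact subsets of $M$ in local coordinates) is routine but should be noted.
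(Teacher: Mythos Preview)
Your proposal is correct and follows essentially the same approach as the paper: both use the explicit series $A_{\tau,j}^{-1}=\sum_k 2^{-k}a_{2^{-k/2}\tau,j}^{-1}$, differentiate term by term via the matrix-inversion chain rule, invoke the $C^1$ bounds on $\txtbf{Y}_\delta$ from Theorem \ref{impefflieb} to get a geometric series whose convergence forces $\alpha<2/(3N)$, and then handle $\det A_{\tau,j}$ via Jacobi's formula and $|\adj A|\lesssim |A|^{n_j-1}$. Your intermediate step of first bounding $\Vert\txtbf{a}_\tau\Vert_{C^1}$ is slightly more explicit than the paper (which jumps directly from $d(a^{-1}_{2^{-k/2}\tau,j})$ to the bounds on $\txtbf{Y}_\delta$), but the substance is identical.
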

\begin{proof}
The proof follows quickly from Theorem \ref{impefflieb}, as it amounts to a straightforward application of the triangle inequality and an application of the bounds on $a_{2^{-k/2}\tau,j}(x)$ that immediately follow from Theorem \ref{impefflieb}, taking $\nu>0$ small enough so that we may bound any constants that arise from above by $\tau^{\alpha N-\epsilon}$, for all $\tau\in(0,\nu)$.
\begin{align*}
    |A_{\tau,j}(x)^{-1}|\leq\sum_{k=1}^\infty2^{-k}|a_{2^{-k/2}\tau,j}(x)^{-1}|&\leq\sum_{k=1}^\infty2^{(\alpha N/2-1)k}\tau^{-\alpha N}\leq\tau^{-\epsilon}\\
    |A_{\tau,j}(x)|\lesssim\sum_{k=1}^\infty2^{-k}|a_{2^{-k/2}\tau,j}(x)|&\leq\sum_{k=1}^\infty2^{(\alpha N/2-1)k}\tau^{-\alpha N}\leq\tau^{-\epsilon}\\
\end{align*}
Now, take $W\in T_{B_j(x)}M_j$ such that $|W|=1$, then the bound on $dA_{\tau,j}(x)$ follows from the $L^\infty$ boundedness of $\txtbf{Y}_\delta^{-1}$ and the $C^1$ boundedness of $\txtbf{Y}_\delta$
\begin{align*}
    |dA_{\tau,j}(x)(W)|&=|A_{\tau,j}(x)d(A_{\tau,j}^{-1})(x)(W)A_{\tau,j}(x)|\\
    &\leq\tau^{-2\epsilon}\left|\sum_{k=1}^\infty 2^{-k}d(a_{2^{-k/2}\tau,j})^{-1}(x)(W)\right|\\
    &\lesssim\tau^{-2\epsilon}\sum_{k=1}^\infty 2^{-k}\Vert\txtbf{d}(\txtbf{Y}_{2^{-\alpha k/2}\tau^\alpha}^{-1})\Vert_{L^\infty}\Vert\txtbf{d}^2\txtbf{B}\Vert_{L^\infty}\\
    &\lesssim\tau^{-2\epsilon}\sum_{k=1}^\infty 2^{-k}\Vert\txtbf{Y}^{-1}_{2^{-\alpha k/2}\tau^\alpha}\Vert_{L^\infty}^2\Vert\txtbf{d}\txtbf{Y}_{2^{-\alpha k/2}\tau^\alpha}\Vert_{L^\infty}\\
    &\leq\tau^{-2\epsilon}\sum_{k=1}^\infty 2^{k(3\alpha N/2-1)}\tau^{-2\alpha N}\lesssim\tau^{-4\epsilon}
\end{align*}
We now turn our attention to the $C^1$ bound for $\det(A_{\tau,j}(x))$. First of all, $|\det A_{\tau,j}(x)|\leq|A_{\tau,j}(x)|^{n_j}\leq\tau^{-\epsilon n_j}$ for all $\tau\in(0,\nu)$, so we have the bound $|A_{\tau,j}(x)|\leq\tau^{-\epsilon}$, and similarly $|A_{\tau,j}(x)^{-1}|\leq\tau^{-2\epsilon}$ for all such $\tau$. all that remains is to establish the $L^\infty$ bound on $d(\det A_{\tau,j})$. The case when $n_j=1$ has already been established, since then $\det A_{\tau,j}=A_{\tau,j}$, so suppose then that $n_j>1$. Taking any $x\in M$ and $w\in T_x M$ such that $|w|=1$, by Jacobi's formula, the chain rule, the Cauchy-Schwarz inequality, and the equivalence of finite dimensional norms,
\begin{align*}
   |d(\det A_{\tau,j})[x](W)|&=|\adj(A_{\tau,j}(x))^\ast:dA_{\tau,j}(x)(w)|\\
   &\lesssim|A_{\tau,j}(x)|^{n_j-1}|dA_{\tau,j}(x)|\lesssim\tau^{-2(n_j-1)\epsilon}
\end{align*}
This proves the claim, since we may adjust $\epsilon$ accordingly.
\end{proof}
We shall henceforth consider $\epsilon\in(0,1-\gamma)$ and $\alpha\in(0,2\epsilon /3N)$ as fixed parameters, and we also note at this point that we have now proven property (3) of Proposition \ref{Gprop}, thus completing its proof.
\begin{lem}\label{switchH}
For all $\eta\in(0,\min\{\gamma-2\epsilon,0.9\gamma-\epsilon,3\gamma-2-\epsilon\})$, there exists a $\nu>0$ such that the following holds: for all $\tau\in(0,\nu)$ and $x,y\in M$ such that $d(x,y)\leq\tau^\gamma$, and $z\in M_j$ such that $d(z,B_j(x))\leq\tau^\gamma$, for all $f_j\in L^1(M_j)$,
\begin{align}
H_{y,\tau,j}f_j(z)\leq (1+\tau^{\eta})H_{x,\tau,j}f_j(z)\label{eq:lem2}
\end{align}
\end{lem}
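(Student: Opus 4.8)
The plan is to estimate the ratio $H_{y,\tau,j}f_j(z)/H_{x,\tau,j}f_j(z)$ directly from the defining integral formula for $H$, and to show it is $1+\mathcal{O}(\tau^\eta)$ uniformly. Writing both operators as integrals against the kernels $G_{y,\tau,j}$ and $G_{x,\tau,j}$ pulled back through the exponential charts based at $B_j(y)$ and $B_j(x)$ respectively, the discrepancy comes from three sources: (i) the two truncation regions $U_{\tau^\gamma,j}(z)$ are the same, but the integrands are compared on it; (ii) the gaussian inputs $A_{\tau,j}(y)$ and $A_{\tau,j}(x)$ differ, and by Lemma \ref{abounds} (the $C^1$ bound $\Vert\txtbf{A}_\tau\Vert_{C^1}\leq\tau^{-\epsilon}$) this difference is $\mathcal{O}(\tau^{\gamma-\epsilon})$ since $d(x,y)\leq\tau^\gamma$; similarly $\det A_{\tau,j}$ changes by $\mathcal{O}(\tau^{\gamma-\epsilon})$; (iii) the two exponential charts $e_{B_j(y)}^{-1}$ and $e_{B_j(x)}^{-1}$ differ, and since $d(B_j(x),B_j(y))\lesssim\tau^\gamma$ by the Lipschitz bound on $B_j$, the bounded-geometry estimates (the forthcoming Lemma \ref{geolem}, giving $C^1$ control on the exponential map and its inverse) show the change of variables between the two charts is a near-isometry with derivative $I+\mathcal{O}(\tau^\gamma)$.

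The key steps, in order, would be: first, change variables in $H_{y,\tau,j}f_j(z)$ so that it is expressed as an integral over the same region, against the same measure, as $H_{x,\tau,j}f_j(z)$, absorbing the chart discrepancy into a Jacobian factor $1+\mathcal{O}(\tau^\gamma)$ and an error in the argument of the gaussian of size $\mathcal{O}(\tau^{2\gamma})$ (this is where the quadratic smallness of the difference of exponential maps — second-order agreement at the base point, via curvature bounds — enters, explaining the $3\gamma-2-\epsilon$ term in the hypothesis on $\eta$). Second, compare the two gaussian densities pointwise on the truncation ball: on $V_{\tau^\gamma,j}(0)$ we have $|z|\lesssim\tau^\gamma$, so the exponent $-\pi\tau^{-2}\langle A_{\tau,j}(x)z,z\rangle$ has size $\lesssim\tau^{-2}\cdot\tau^{-\epsilon}\cdot\tau^{2\gamma}=\tau^{2\gamma-2-\epsilon}$, and perturbing $A_{\tau,j}$ by $\mathcal{O}(\tau^{\gamma-\epsilon})$ changes this exponent by $\lesssim\tau^{-2}\cdot\tau^{\gamma-\epsilon}\cdot\tau^{2\gamma}=\tau^{3\gamma-2-\epsilon}$; exponentiating and using $|e^s-1|\lesssim|s|$ for bounded $s$ gives a multiplicative error $1+\mathcal{O}(\tau^{3\gamma-2-\epsilon})$. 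Third, the normalising prefactor $\tau^{-n_j}\det(A_{\tau,j})^{1/2}$ changes by the factor $(\det A_{\tau,j}(y)/\det A_{\tau,j}(x))^{1/2}=1+\mathcal{O}(\tau^{\gamma-\epsilon})$ by the $C^1$ bound on $\det\txtbf{A}_\tau$ (this gives the $\gamma-2\epsilon$ term, with a little slack for constants and for the lower bound $\det A_{\tau,j}\gtrsim\tau^{\epsilon n_j}$). Finally, collect all three multiplicative errors; since $f_j\geq0$ everything is monotone and one obtains $H_{y,\tau,j}f_j(z)\leq(1+\tau^\eta)H_{x,\tau,j}f_j(z)$ once $\tau$ is small enough and $\eta$ is below the minimum of the three relevant exponents $\gamma-2\epsilon$, $0.9\gamma-\epsilon$ (the $0.9$ presumably tracking the fixed parameter near $1$ and the Lipschitz constant of $B_j$), and $3\gamma-2-\epsilon$.

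The main obstacle I expect is bookkeeping the chart-discrepancy term (ii)/the change of variables cleanly: one must verify that transporting the integration variable from the chart at $B_j(y)$ to the chart at $B_j(x)$ produces only a multiplicative $1+\mathcal{O}(\tau^\gamma)$ Jacobian error \emph{and} an additive perturbation of the gaussian's argument that is genuinely $\mathcal{O}(\tau^{2\gamma})$ rather than merely $\mathcal{O}(\tau^\gamma)$ — the quadratic gain is essential for the hypothesis $\eta<3\gamma-2-\epsilon$ to be non-vacuous when $\gamma$ is close to $1$, and it relies on the fact that two exponential charts based at nearby points, composed appropriately, agree to second order at the common base — precisely the content of the bounded-geometry Lemma \ref{geolem}. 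A secondary subtlety is that the truncation radius $\tau^{\gamma}$ must match on both sides (it does, since it depends only on $\tau$ and $\Vert dB_j\Vert_\infty$, not on the centre), so no boundary terms from mismatched truncation regions appear; one should still check that the point $z$, lying within $\tau^\gamma$ of $B_j(x)$, lies in the domain $U_{\rho-\tau^\gamma}$ of both operators, which is immediate for small $\tau$ from the injectivity-radius bound $\rho$.
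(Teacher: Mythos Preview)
Your proposal is correct and follows essentially the same route as the paper: compare the two kernels pointwise on the common integration domain $U_{\tau^\gamma,j}(z)$, using Lemma~\ref{geolem} (via a first-order Taylor expansion of $e_{B_j(y)}^{-1}\circ e_{B_j(x)}$) to relate $v_y:=e_{B_j(y)}^{-1}(z)-e_{B_j(y)}^{-1}(w)$ to $v_x:=e_{B_j(x)}^{-1}(z)-e_{B_j(x)}^{-1}(w)$ up to an $\mathcal{O}(|v_x|^2)=\mathcal{O}(\tau^{2\gamma})$ remainder, then using the $C^1$ bounds on $A_{\tau,j}$ and $\det A_{\tau,j}$ from Lemma~\ref{abounds} to control the exponent and the prefactor; the three exponents $\gamma-2\epsilon$, $0.9\gamma-\epsilon$, $3\gamma-2-\epsilon$ arise exactly as you describe. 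One simplification relative to your outline: no change of variables or Jacobian factor is needed, since both $H_{x,\tau,j}f_j(z)$ and $H_{y,\tau,j}f_j(z)$ are integrals of $f_j(w)\cdot(\text{kernel})$ over the \emph{same} set $U_{\tau^\gamma,j}(z)$ with the \emph{same} measure $dw$; the paper simply bounds $G_{y,\tau,j}(v_y)\leq(1+\tau^\eta)G_{x,\tau,j}(v_x)$ for each fixed $w$ and integrates.
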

In order to prove this statement, we shall need a fairly routine geometric lemma, the proof of which may be found in the appendix.
\begin{lem}\label{geolem}
Let $M$ be a Riemannian manifold with bounded geometry of injectivity radius $\rho$. Then norms of the covariant derivatives (up to second order) of the exponential map based at $p\in M$ are bounded above on the ball of radius $\rho$ centred at $0$ in $T_pM$, uniformly in $p$.
\end{lem}
\begin{proof}[Proof of Lemma \ref{switchH}.]
Let $\tau>0$ be small, let $x,y\in M$ satisfy $d(x,y)\leq\tau^{\gamma}$, and take some $z\in M_j$ such that $d(z,B_j(x))\leq\tau^\gamma$. First of all, by the chain rule, for any $v\in T_x M$, $d(e_{B_j(y)}^{-1}\circ e_{B_j(x)})[v]=d(e_{B_j(y)}^{-1})[e_{B_j(x)}(v)]de_{B_j(x)}[v]$.
Given $w\in U_{\tau,j}(z)$, by Taylor's theorem, we may approximate $v_y:=e_{B_j(y)}^{-1}(z)-e_{B_j(y)}^{-1}(w)$ in terms of $v_x:=e_{B_j(x)}^{-1}(z)-e_{B_j(x)}^{-1}(w)$ in the following manner:
\begin{align}
    v_y&=e_{B_j(y)}^{-1}\circ e_{B_j(x)}\circ e_{B_j(x)}^{-1}(z)-e_{B_j(y)}^{-1}\circ e_{B_j(x)}\circ e_{B_j(x)}^{-1}(w)\nonumber\\
    &=d(e_{B_j(y)}^{-1}\circ e_{B_j(x)})[e_{B_j(x)}^{-1}(z)](v_x)+\mathcal{O}(|v_x|^2)\nonumber\\
    &=d(e_{B_j(y)}^{-1})[z]de_{B_j(x)}[e_{B_j(x)}^{-1}(z)](v_x)+\mathcal{O}(|v_x|^2).\label{eq:higherorder}
\end{align}
Above, we use Lemma \ref{geolem} to uniformly bound the higher derivatives. Define the linear map $T_{x,y}:=d(e_{B_j(y)}^{-1})[z]de_{B_j(x)}[e_{B_j(x)}^{-1}(z)]$, then it follows that
\begin{align}
    |A_{\tau,j}(y)^{1/2}v_y|^2&=|A_{\tau,j}(y)^{1/2}(T_{x,y}v_x+\mathcal{O}(|v_x|^2))|^2\nonumber\\
    &\leq|A_{\tau,j}(y)^{1/2}(T_{x,y}v_x)|^2+\tau^{2.9\gamma-\epsilon},\label{eq:switchH1}
\end{align}
for sufficiently small $\tau>0$. Now, by the uniform bounds on $\det A_{\tau,j}$ established in Lemma \ref{abounds}, we have that
\begin{align*}
    |\log(\det A_{\tau,j}(x))-\log(\det A_{\tau,j}(y))|&\leq\frac{|\det A_{\tau,j}(x) - \det A_{\tau,j}(y)|}{\min\{|\det A_{\tau,j}(x)|,|\det A_{\tau,j}(y)|\}}\\
    &\leq\tau^{-\epsilon}\Vert d(\det A_{\tau,j})\Vert_{L^\infty}d(x,y)\\
    &\leq\tau^{\gamma-2\epsilon}.
\end{align*}
Together with \eqref{eq:switchH1}, this implies the bound $G_{y,\tau,j}(v_y)\leq(1+\tau^{\eta})G_{x,\tau,j}(v_x)$ for sufficiently small $\tau>0$.
\begin{align*}
   \frac{G_{y,\tau,j}(v_y)}{G_{x,\tau,j}(v_x)}&=\frac{\det(A_{\tau,j}(y))}{\det(A_{\tau,j}(x))}\exp(\pi\tau^{-2}(\vert A_{\tau,j}(x)^{1/2}v_x\vert^2-\vert A_{\tau,j}(y)^{1/2}v_y\vert))\\
   &\leq\exp(\tau^{\gamma-2\epsilon}+\pi\tau^{0.9\gamma-\epsilon}+\pi\tau^{-2}(\vert A_{\tau,j}(x)^{1/2}v_x\vert^2-\vert A_{\tau,j}(y)^{1/2}T_{x,y}v_x\vert^2))\\
   &\leq\exp(\tau^{\gamma-2\epsilon}+\pi\tau^{0.9\gamma-\epsilon}+\pi\tau^{-2}\langle (A_{\tau,j}(x)-T_{x,y}^*A_{\tau,j}(y)T_{x,y})v_x,v_x\rangle)\\
   &\leq\exp(\tau^{\gamma-2\epsilon}+\pi\tau^{0.9\gamma-\epsilon}+\pi\tau^{-2} |A_{\tau,j}(x)-T_{x,y}^*A_{\tau,j}(y)T_{x,y}||v_x|^2)\\
   &\leq\exp(\tau^{\gamma-2\epsilon}+\pi\tau^{0.9\gamma-\epsilon}+2\pi\tau^{-2}\Vert dA_{\tau,j}\Vert_{L^\infty}\tau^{3\gamma})\\
&\leq\exp(\tau^{\gamma-2\epsilon}+\pi\tau^{0.9\gamma-\epsilon}+2\pi\tau^{3\gamma-2-\epsilon})\leq1+\tau^\eta
\end{align*}
In the penultimate line we applied the mean value theorem to obtain $|A_{\tau,j}(x)-T_{x,y}^*A_{\tau,j}(y)T_{x,y}|\leq2\Vert dA_{\tau,j}\Vert_{L^\infty}d(x,y)$. The claim then easily follows from the definition of $H_{x,\tau,j}$.
\begin{align}
    H_{y,\tau,j}f_j(z)&:=\int_{U_{\tau,j}(z)}f_j(w)G_{x,\tau,j}(e_{B_j(x)}^{-1}(z)-e_{B_j(x)}^{-1}(w))dw\nonumber\\
    &\leq(1+\tau^\eta)\int_{U_{\tau,j}(z)}f_j(w)G_{x,\tau,j}(e_{B_j(x)}^{-1}(z)-e_{B_j(x)^{-1}}(w)dw=(1+\tau^\eta) H_{y,\tau,j}f_j(z)
\end{align}
\end{proof}
\begin{lem}[Truncation of Gaussians]\label{trunc}
Let $m,n\in\mathbb{N}$, $\kappa\simeq 1$, and for each $\tau>0$ let $A_{\tau}\in\R^{n\times n}$ be a positive definite matrix. Define $g_{\tau}:\mathbb{R}^n\rightarrow\mathbb{R}$ to be the gaussian $g_{\tau}(x):=\tau^{-n}\exp(-\pi\tau^{-2}\langle A_{\tau}x,x\rangle)$. Suppose that $|\det(A_\tau)|^{-1},|A_{\tau}^{-1}|\leq \tau^{-\epsilon}$ for some $\epsilon\in(0,(1-\gamma)/2)$. Then, there exists a $\nu>0$ depending only on $n$, $m$,  $\epsilon$, and $\gamma$ such that for all $\tau\in(0,\nu)$
\begin{align}
    \det(A_\tau)^{-1/2}=\int_{\mathbb{R}^n} g_{\tau}\leq(1+\tau^{\epsilon})\int_{ U_{\kappa\tau^{\gamma}}(0)}g_{\tau}.\label{eq:trunc}
\end{align}
\end{lem}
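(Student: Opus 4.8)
The plan is to rescale so that \eqref{eq:trunc} becomes a tail estimate for a gaussian over an expanding region, and then to reduce the anisotropic gaussian to an isotropic one using the lower bound on the eigenvalues of $A_\tau$.

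First I would substitute $y = x/\tau$ in both integrals. This yields at once the stated identity $\int_{\R^n}g_\tau = \int_{\R^n}\exp(-\pi\langle A_\tau y,y\rangle)\,dy = \det(A_\tau)^{-1/2}$, and turns the truncated integral into $\int_{\{|y|\leq\kappa\tau^{\gamma-1}\}}\exp(-\pi\langle A_\tau y,y\rangle)\,dy$. Writing $T_\tau$ for the complementary tail $\int_{\{|y|>\kappa\tau^{\gamma-1}\}}\exp(-\pi\langle A_\tau y,y\rangle)\,dy$, the inequality \eqref{eq:trunc} is equivalent to $(1+\tau^\epsilon)T_\tau \leq \tau^\epsilon\det(A_\tau)^{-1/2}$, so it suffices to prove $T_\tau \leq \tfrac12\tau^\epsilon\det(A_\tau)^{-1/2}$ for all sufficiently small $\tau$.

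Next I would diagonalise $A_\tau = O^*D_\tau O$ with $O$ orthogonal and $D_\tau$ diagonal with entries $\lambda_1,\dots,\lambda_n$; the hypothesis $|A_\tau^{-1}|\leq\tau^{-\epsilon}$ forces $\lambda_i\geq\tau^\epsilon$ for every $i$. An orthogonal change of variables preserves both Lebesgue measure and Euclidean balls, so we may assume $A_\tau = D_\tau$. Substituting $z_i = \sqrt{\lambda_i}\,y_i$ we get $dy = \det(D_\tau)^{-1/2}\,dz$ and $|y|^2 = \sum_i z_i^2/\lambda_i \leq \tau^{-\epsilon}|z|^2$, so the region $\{|y|>\kappa\tau^{\gamma-1}\}$ is contained in $\{|z|>\kappa\tau^{\gamma-1+\epsilon/2}\}$, whence
\[
  T_\tau \;\leq\; \det(A_\tau)^{-1/2}\int_{\{|z|>\kappa\tau^{\gamma-1+\epsilon/2}\}}\exp(-\pi|z|^2)\,dz .
\]
Since $\epsilon<(1-\gamma)/2$ we have $\gamma-1+\epsilon/2<0$, so the radius $\kappa\tau^{\gamma-1+\epsilon/2}\to\infty$ as $\tau\to0$; by the elementary bound $\int_{\{|z|>\rho\}}\exp(-\pi|z|^2)\,dz\lesssim_n\exp(-\pi\rho^2/2)$, the last integral is at most $\exp(-c\,\tau^{2(\gamma-1)+\epsilon})$ for some $c\simeq1$, and as $2(\gamma-1)+\epsilon<0$ this decays faster than any power of $\tau$. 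In particular it is $\leq\tfrac12\tau^\epsilon$ once $\tau<\nu$ for a suitable $\nu=\nu(n,m,\epsilon,\gamma)$, which gives $T_\tau\leq\tfrac12\tau^\epsilon\det(A_\tau)^{-1/2}$ and hence \eqref{eq:trunc}.

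There is no serious obstacle here; the one step worth stating carefully is the anisotropy of $A_\tau$, whose eigenvalues may be of very different sizes, so that the quadratic form cannot be replaced by a single scale. This is precisely why the diagonalisation together with the one-sided bound $\lambda_{\min}(A_\tau)\geq\tau^\epsilon$ is used: it traps the tail region inside a genuine Euclidean ball at the cost of only the factor $\det(A_\tau)^{-1/2}$, which then cancels against the full integral. The role of the constraint $\epsilon<(1-\gamma)/2$ is exactly to make the rescaled truncation radius $\kappa\tau^{\gamma-1+\epsilon/2}$ diverge; the determinant hypothesis $|\det A_\tau|^{-1}\leq\tau^{-\epsilon}$ is not actually needed for this particular argument, but it is harmless to retain in the statement.
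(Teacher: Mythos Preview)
Your proof is correct and takes a somewhat different route from the paper's. After the common rescaling $y=x/\tau$, the paper bounds the tail $\int_{|y|>\tau^{\gamma-1}}\exp(-\pi\langle A_\tau y,y\rangle)\,dy$ by partitioning into dyadic annuli $2^k\tau^{\gamma-1}\leq|y|\leq2^{k+1}\tau^{\gamma-1}$, bounding the integrand on each annulus by its supremum via $\langle A_\tau y,y\rangle\geq|A_\tau^{-1}|^{-1}|y|^2$, and summing the resulting lacunary series. You instead diagonalise and change variables $z_i=\sqrt{\lambda_i}\,y_i$ to reduce to a standard isotropic Gaussian tail, which is arguably more direct. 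A genuine dividend of your approach is that the factor $\det(A_\tau)^{-1/2}$ appears automatically in your tail bound $T_\tau\leq\det(A_\tau)^{-1/2}\int_{|z|>\kappa\tau^{\gamma-1+\epsilon/2}}e^{-\pi|z|^2}\,dz$, so the comparison with the full integral is immediate and, as you note, the hypothesis $|\det A_\tau|^{-1}\leq\tau^{-\epsilon}$ is never invoked; the paper's argument first proves an absolute bound $T_\tau\lesssim\tau^{2\epsilon}$ and then uses the determinant hypothesis to convert this into a relative one. The paper also begins by reducing to $\kappa=1$ via a small adjustment of $\gamma$, whereas you carry $\kappa$ through, which costs nothing.
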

The reader should note that this lemma is very similar, albeit slightly stronger, than Lemma 6.5 of \cite{bennett2020nonlinear}, and the proof in turn also follows a similar strategy.
\begin{proof}
Since we may be flexible with the choice we have in $\gamma$, if the claim holds for $\kappa=1$ and we let $\gamma'=\gamma-\eta$ for some small $\eta>0$, we obtain the general result by enlarging the domain of integration on the right-hand side of (\ref{eq:trunc}) from $U_{\tau^{\gamma}}$ to $U_{\kappa\tau^{\gamma'}}(0)$, taking $\tau\leq\kappa^{-1/\eta}$. It is sufficient to show that there exists $\nu>0$ such that, for all $\tau\in(0,\nu)$,
\begin{align}
    \int_{\mathbb{R}^n\setminus U_{\tau^{\gamma}}(0)}g_{\tau}\leq c\tau^{2\epsilon}. \label{eq:trunceq1}
\end{align}
for some $c\simeq 1$. To see this we simply split the integral of $g_\tau$ into $U_{\tau^\gamma}(0)$ and $\R^n\setminus U_{\tau^\gamma}(0)$.
\begin{align*}
    \det(A_\tau)^{-1/2}&=\int_{\R^n\setminus U_{\tau^\gamma}(0)}g_\tau+\int_{U_{\tau^\gamma}(0)}g_\tau \leq c\tau^{2\epsilon}+\int_{U_{\tau^\gamma}(0)}g_\tau
\end{align*}
We then obtain the desired bound by rearranging the above expression and applying the hypothesis that $|\det(A_\tau)|^{-1}\leq \tau^{-\epsilon}$.
\begin{align*}
     \det(A_\tau)^{-1/2}-c\tau^{2\epsilon}&\leq\int_{U_{\tau^\gamma}(0)}g_\tau\\
      \det(A_\tau)^{-1/2}&\leq(1-c\det(A_\tau)^{-1/2}\tau^{2\epsilon})^{-1}\int_{U_{\tau^\gamma}(0)}g_\tau\leq (1-c\tau^{3\epsilon/2})^{-1}\int_{U_{\tau^\gamma}(0)}g_\tau
\end{align*}
 which of course implies (\ref{eq:trunc}) if $\tau$ is taken to be sufficiently small. To estimate the left hand side of (\ref{eq:trunceq1}), we shall partition the domain of integration $\mathbb{R}^n\setminus U_{\tau^\gamma}(0)$ into annuli, and bound the resulting infinite sum above by a lacunary series.
\begin{align*}
    \int_{\mathbb{R}^n\setminus U_{\tau^\gamma}(0)}g_{\tau}&=\int_{|x|\geq\tau^{\gamma-1}}\exp(-\pi|A_{\tau}^{1/2}x|^2)dx\\
    &=\sum_{k=0}^{\infty}\int_{2^k\tau^{\gamma-1}\leq|x|\leq 2^{k+1}\tau^{\gamma-1}}\exp(-\pi|A_{\tau}^{1/2}x|^2)dx\\
    &\leq \sum_{k=0}^{\infty}\sup_{2^k\log(1/\tau)=|x|} (\exp(-\pi|A_{\tau}x|^2))Vol(\lbrace{2^k\tau^{\gamma-1}\leq|x|\leq 2^{k+1}\tau^{\gamma-1}\rbrace})\\
    &\lesssim\tau^{n(1-\gamma)}\sum_{k=0}^{\infty}2^{nk}\exp(-\pi |A_{\tau}^{-1}|^{-2}2^{2k}\tau^{2(\gamma-1)})\lesssim\tau^{n(1-\gamma)}\sum_{k=0}^{\infty}\exp(-\pi |A_{\tau}^{-1}|^{-2}2^{2k-1}\tau^{2(\gamma-1)})
\end{align*}
Finally, we reduce this bound by applying the hypotheses $|A_\tau^{-1}|\leq\tau^{-\epsilon}$, $\epsilon\leq (1-\gamma)/2$ and the fact that $\tau^{\gamma - 1}\geq c \log(1/\tau)$ for some suitable $c\simeq 1$. We then obtain \eqref{eq:trunc} if we take $\tau\in(0,\nu)$, where $\nu\in(0,1)$ satisfies $c\nu^{2\epsilon+\gamma-1}\geq 2\epsilon$.
\begin{align*}
    \int_{\mathbb{R}^n\setminus U_{\tau^\gamma}(0)}g_{\tau}\lesssim \tau^{2n\epsilon} \sum_{k=0}^{\infty}\tau^{c\tau^{2\epsilon}2^{2k-1}\tau^{\gamma-1}}
    \lesssim \tau^{2n\epsilon}\sum_{k=0}^{\infty}\tau^{c\nu^{2\epsilon+\gamma-1}2^{2k-1}}\lesssim \tau^{2n\epsilon}\sum_{k=0}^{\infty}\tau^{\epsilon2^{2k-1}}\lesssim\tau^{2(n+1)\epsilon}
\end{align*}
\end{proof}
We may now prove the pointwise convergence to initial data for $H_{x,\tau,j}f_j\circ B_j$, a fact the reader will recall that we needed to prove that Proposition \ref{timestep} implies Theorem \ref{NBall1}.
\begin{lem}[Pointwise convergence to initial data]\label{pointwconv}
For each $j\in\{1,...,m\}$, let $f_j\in C_0(M_j)$ and $x\in M$, then, 
\begin{align}
    \lim_{\tau\rightarrow 0}H_{x,\tau,j}f_j\circ B_j(x)= f_j\circ B_j(x).
\end{align}
\end{lem}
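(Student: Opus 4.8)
The plan is to recognise $H_{x,\tau,j}f_j$ evaluated along $B_j$ as a truncated convolution against an approximate identity, pulled back to the tangent space, and then to combine the near-unit mass of the kernel on the truncation ball (from Lemmas~\ref{abounds} and~\ref{trunc}) with the uniform continuity of $f_j$.

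Setting $z=B_j(x)$, so that $e_{B_j(x)}^{-1}(z)=0$, and using that $G_{x,\tau,j}$ is an even gaussian, we have
\[
H_{x,\tau,j}f_j(B_j(x))=\int_{U_{\tau^\gamma,j}(B_j(x))}f_j(w)\,G_{x,\tau,j}\bigl(e_{B_j(x)}^{-1}(w)\bigr)\,dw .
\]
For $\tau$ small the geodesic ball $U_{\tau^\gamma,j}(B_j(x))$, of radius $\tau^\gamma\Vert dB_j\Vert_{L^\infty}<\rho$, equals $e_{B_j(x)}\bigl(V_{\tau^\gamma,j}(0)\bigr)$, so the substitution $w=e_{B_j(x)}(v)$ turns this into $\int_{V_{\tau^\gamma,j}(0)}f_j(e_{B_j(x)}(v))\,G_{x,\tau,j}(v)\,J(v)\,dv$, where $J(v):=|\det de_{B_j(x)}[v]|$ is the Riemannian volume density. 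By Lemma~\ref{geolem} the map $v\mapsto de_{B_j(x)}[v]$ is uniformly (in the basepoint) bounded and Lipschitz on the unit ball with $de_{B_j(x)}[0]=\mathrm{id}$, whence $J(v)=1+\mathcal{O}(|v|)=1+\mathcal{O}(\tau^\gamma)$ uniformly for $v\in V_{\tau^\gamma,j}(0)$.

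I would then split $f_j(e_{B_j(x)}(v))=f_j(B_j(x))+\bigl(f_j(e_{B_j(x)}(v))-f_j(B_j(x))\bigr)$. For the main term, Lemma~\ref{abounds} supplies $|A_{\tau,j}(x)^{-1}|,|\det A_{\tau,j}(x)|^{-1}\lesssim\tau^{-\epsilon}$, so Lemma~\ref{trunc} applied with $\kappa=\Vert dB_j\Vert_{L^\infty}$ gives $1-\tau^{\epsilon}\leq\int_{V_{\tau^\gamma,j}(0)}G_{x,\tau,j}\leq1$; combined with $J=1+\mathcal{O}(\tau^\gamma)$ this yields $f_j(B_j(x))\int_{V_{\tau^\gamma,j}(0)}G_{x,\tau,j}\,J\to f_j(B_j(x))$ as $\tau\to0$. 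For the error term, $d\bigl(e_{B_j(x)}(v),B_j(x)\bigr)=|v|\leq\tau^\gamma\Vert dB_j\Vert_{L^\infty}$, so the uniform continuity of $f_j$ bounds $|f_j(e_{B_j(x)}(v))-f_j(B_j(x))|$ by the modulus of continuity of $f_j$ at $\tau^\gamma\Vert dB_j\Vert_{L^\infty}$, which tends to $0$; since $\int_{V_{\tau^\gamma,j}(0)}G_{x,\tau,j}\,J\lesssim1$, this contribution vanishes. Adding the two pieces proves the lemma.

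I do not expect a serious obstacle. The two points needing mild care are that the pull-back of the geodesic truncation ball under $e_{B_j(x)}^{-1}$ is a genuine Euclidean ball, so that Lemma~\ref{trunc} applies verbatim, and that the fixed parameter $\epsilon$ (equivalently $\alpha$) may be taken small enough to satisfy the hypothesis $\epsilon<(1-\gamma)/2$ of Lemma~\ref{trunc}; the latter is harmless, since those parameters were only ever constrained from above. Everything else is just uniform continuity together with the mass estimate already established.
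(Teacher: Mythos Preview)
Your proposal is correct and follows essentially the same route as the paper: pull back to the tangent space via $e_{B_j(x)}^{-1}$, control the Jacobian of the exponential map by Lemma~\ref{geolem}, split $f_j$ into its value at $B_j(x)$ plus an oscillation, and handle the two pieces respectively by the near-unit mass of the truncated kernel (Lemma~\ref{trunc}) and the uniform continuity of $f_j$. The only cosmetic difference is that the paper packages the Jacobian bound multiplicatively as $(1+\tau^{\eta})^{\pm1}$ rather than additively as $1+\mathcal{O}(\tau^{\gamma})$, and your caveat about the constraint $\epsilon<(1-\gamma)/2$ is well taken and harmless for exactly the reason you give.
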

\begin{proof}[Proof of Lemma \ref{pointwconv}]
     Let $\tau>0$ be small. Since $f_j$ is uniformly continuous, given $\epsilon>0$, there exists a $\delta>0$ such that for all $z,z'\in M$ such that $d(z,z')\leq\delta$, we have that $|f_j(z)-f_j(z')|\leq\epsilon$. Therefore, provided $C\tau^{\gamma}\leq\delta$, we may bound $|f_j\circ B_j(x)-H_{x,\tau,j}f_j\circ B_j(x)|$ in the following way:
\begin{align*}
    |f_j\circ B_j(x)-H_{x,\tau,j}f_j\circ B_j(x)|=&\left|f_j\circ B_j(x)-\int_{U_{\tau^\gamma,j}(0)}f_j(w)G_{x,\tau,j}(e_{B_j(x)}^{-1}(w))dw\right|\\
    \leq& f_j\circ B_j(x)\left(1-\int_{U_{\tau^\gamma,j}(B_j(x))}G_{x,\tau,j}\circ e_{B_j(x)}^{-1}\right)\\
    &+\left|\int_{V_{\tau^\gamma,j}(0)}\left(f_j\circ B_j(x))-f_j(w)\right)G_{x,\tau,j}(e_{B_j(x)}^{-1}(w))dw\right|.
\end{align*}
By the uniform boundedness of the second derivative of the exponential map $e_{B_j(x)}$ established in Lemma \ref{geolem}, provided that $\tau>0$ is sufficiently small, for all $x\in M$, $j\in\{1,...,m\}$, and $v\in V_{\tau^{\gamma},j}(0)\subset T_{B_j(x)} M_j$, we have
\begin{align}
    (1+\tau^\eta)^{-1}\leq \det(de_{B_j(x)}[v])\leq 1+\tau^\eta. \label{eq:dexp}
\end{align}
We may then apply Lemma \ref{geolem} to bound the first term by a power of $\tau$. For the second term, we apply the triangle inequality and bound the resulting gaussian integral similarly.
\begin{align*}
|f_j&\circ B_j(x)-H_{x,\tau,j}f_j\circ B_j(x)|\\
&\leq f_j\circ B_j(x)\left(1-(1+\tau^{\eta})^{-1}\int_{V_{\tau^{\gamma},j}(0)}G_{x,\tau,j}\right)\\
&\hspace{2cm}+\int_{U_{\tau^\gamma,j}(B_j(x))}\left|f_j\circ B_j(x)-f_j(w)\right|G_{x,\tau,j}\circ e_{B_j(x)}^{-1}(w)dw\\
    &\leq (1+\tau^{\eta})^{-1}(f_j\circ B_j(x)\tau^{\eta}+\epsilon)
\end{align*}
This of course implies the claim of the lemma.
\end{proof}
\begin{lem}[Switching]\label{switch}
     For all $\eta\in(0,\alpha)$, there exists $\nu>0$ such that for $\tau\in(0,\nu)$ and $x,y\in M$ such that $d(x,y)\leq\tau^\gamma$,
        \begin{align}
\frac{1}{\BL(\txtbf{dB}(y),\txtbf{p})}\prod_{j=1}^mg_{y,\tau,j}\circ dB_j(y)(e_y^{-1}(x))^{p_j}\leq\frac{1+\tau^{\eta}}{\BL(\txtbf{dB}(x),\txtbf{p})}\prod_{j=1}^mg_{x,\tau,j}\circ dB_j(x)(e_x^{-1}(y))^{p_j}. \label{eq:switch}
\end{align}
\end{lem}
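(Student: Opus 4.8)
The plan is to peel the Brascamp--Lieb constants off both sides of \eqref{eq:switch} using that $\txtbf{a}_\tau(z)$ is a $\tau^\alpha$-near extremiser for $(\txtbf{dB}(z),\txtbf{p})$, reducing the claim to a comparison of two explicit gaussians evaluated at $e_x^{-1}(y)$ and $e_y^{-1}(x)$; the point is then that these two vectors are $\pm d(x,y)$ times the endpoint velocities of a single short geodesic, so that the two quadratic forms appearing in the exponents differ by something controlled by the covariant derivative of the underlying gaussian input.

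For the reduction, write $M_\tau(z):=\sum_{j=1}^m p_j\,dB_j(z)^\ast a_{\tau,j}(z)\,dB_j(z)$, a symmetric positive definite operator on $T_zM$ (positive definite because $(\txtbf{dB}(z),\txtbf{p})$ is feasible), and set
\[
\Phi_\tau(z,u):=\frac{\det M_\tau(z)^{1/2}}{\prod_{j=1}^m\det a_{\tau,j}(z)^{p_j/2}}\exp(-\pi\tau^{-2}\langle M_\tau(z)u,u\rangle).
\]
Since $\sum_j p_jn_j=n$, the explicit formula for $\BLg$ gives $\prod_j g_{z,\tau,j}(dB_j(z)u)^{p_j}=\tau^{-n}\,\BLg(\txtbf{dB}(z),\txtbf{p};\txtbf{a}_\tau(z))\,\Phi_\tau(z,u)$. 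Bounding $\BLg(\txtbf{dB}(y),\txtbf{p};\txtbf{a}_\tau(y))\le\BL(\txtbf{dB}(y),\txtbf{p})$ on the left of \eqref{eq:switch} and using $\BL(\txtbf{dB}(x),\txtbf{p})\le(1-\tau^\alpha)^{-1}\BLg(\txtbf{dB}(x),\txtbf{p};\txtbf{a}_\tau(x))$ on the right reduces \eqref{eq:switch} to $\Phi_\tau(y,e_y^{-1}(x))\le(1+\tau^\eta)(1-\tau^\alpha)\,\Phi_\tau(x,e_x^{-1}(y))$. As $\eta<\alpha$ we have $(1+\tau^\eta)(1-\tau^\alpha)\ge 1+\tfrac12\tau^\eta$ for $\tau$ small, so it suffices to prove $\Phi_\tau(y,e_y^{-1}(x))\le(1+\tau^{\eta'})\Phi_\tau(x,e_x^{-1}(y))$ for some fixed $\eta'>\eta$.

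The determinant prefactor ratio $\det M_\tau(y)^{1/2}\prod_j\det a_{\tau,j}(x)^{p_j/2}\big/\det M_\tau(x)^{1/2}\prod_j\det a_{\tau,j}(y)^{p_j/2}$ is controlled exactly as in Lemma \ref{abounds}: the construction of $\txtbf{a}_\tau$ and Theorem \ref{impefflieb} yield $\Vert a_{\tau,j}\Vert_{C^1},\Vert\det a_{\tau,j}\Vert_{C^1},\Vert a_{\tau,j}^{-1}\Vert_{L^\infty}\lesssim\tau^{-\alpha N}$ for some $N$ depending only on the dimensions and exponents, whence $\Vert M_\tau\Vert_{C^1},\Vert\det M_\tau\Vert_{C^1}\lesssim\tau^{-\alpha N}$ (using $\Vert\txtbf{dB}\Vert_{C^1}\lesssim 1$) and $\det M_\tau,\det a_{\tau,j}\gtrsim\tau^{\alpha N}$, the lower bound on $\det M_\tau$ coming from the uniform positivity of $\sum_j p_j\,dB_j(z)^\ast dB_j(z)$ (a consequence of $\BL(\txtbf{dB}(z),\txtbf{p})<C$). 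The mean value theorem along a minimising geodesic from $x$ to $y$, of length $d(x,y)\le\tau^\gamma$, then bounds this ratio by $1+\mathcal{O}(\tau^{\gamma-c\alpha N})$, which is $1+\mathcal{O}(\tau^{\eta'})$ once $\alpha$ is small.

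The exponential comparison is the crux. For $\tau$ small, $r:=d(x,y)\le\tau^\gamma<\rho$, so there is a unique unit-speed minimising geodesic $\Gamma$ from $x$ to $y$, and $e_x^{-1}(y)=r\dot\Gamma(0)$, $e_y^{-1}(x)=-r\dot\Gamma(r)$. Consequently $\langle M_\tau(x)e_x^{-1}(y),e_x^{-1}(y)\rangle-\langle M_\tau(y)e_y^{-1}(x),e_y^{-1}(x)\rangle=r^2(q(0)-q(r))$ with $q(t):=\langle M_\tau(\Gamma(t))\dot\Gamma(t),\dot\Gamma(t)\rangle$, and since $\Gamma$ is a geodesic $q'(t)=\langle(\nabla_{\dot\Gamma(t)}M_\tau)\dot\Gamma(t),\dot\Gamma(t)\rangle$, so $|q(0)-q(r)|\le r\,\Vert\nabla M_\tau\Vert_{L^\infty}\lesssim\tau^{\gamma-\alpha N}$ — the covariant-derivative bound following from the $C^1$ bound on $a_{\tau,j}$, the hypothesis $\Vert\txtbf{dB}\Vert_{C^1}\lesssim 1$, and bounded geometry (Lemma \ref{geolem}). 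Hence $|r^2(q(0)-q(r))|\lesssim\tau^{3\gamma-\alpha N}$, so the ratio of the two exponentials is at most $\exp(\pi\tau^{3\gamma-2-\alpha N})\le 1+\tau^{\eta'}$ for $\tau$ small and any $\eta'<3\gamma-2-\alpha N$. The genuine obstacle is this last estimate: one needs $3\gamma-2-\alpha N>\eta$, and since the hypothesis only provides $\eta<\alpha$ this forces one to have taken $\gamma$ close to $1$ (so that $3\gamma-2>0$) and $\alpha$ small relative to $\gamma$ — with the paper's choices of these parameters the constraint $\eta<\alpha$ sits comfortably inside the admissible range. Combining this with the prefactor estimate gives $\Phi_\tau(y,e_y^{-1}(x))\le(1+\tau^{\eta'})\Phi_\tau(x,e_x^{-1}(y))$, which completes the argument.
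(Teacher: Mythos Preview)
Your proof is correct and follows essentially the same route as the paper: introduce $M_\tau(z)=\sum_j p_j\,dB_j(z)^\ast a_{\tau,j}(z)\,dB_j(z)$, use the $\tau^\alpha$-near-extremiser property to replace the Brascamp--Lieb constants by $\det M_\tau^{-1/2}$ up to a $(1-\tau^\alpha)$ factor, then compare the two exponents via the short geodesic joining $x$ and $y$ and the $C^1$ bound on $M_\tau$. Your device of differentiating $q(t)=\langle M_\tau(\Gamma(t))\dot\Gamma(t),\dot\Gamma(t)\rangle$ along the geodesic is exactly the paper's parallel-transport identity $e_y^{-1}(x)=-P_\sigma e_x^{-1}(y)$ in slightly different clothing; the only superfluous step is carrying the factors $\prod_j\det a_{\tau,j}^{p_j/2}$ inside $\Phi_\tau$, which the paper avoids since $g_{z,\tau,j}$ is not $L^1$-normalised and those determinants never appear.
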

\begin{proof}
     Let $0<\tau<\rho/10$, and define the positive-definite symmetric matrix field $M_\tau\in\Gamma(TM\otimes T^*M)$ by
    \begin{align*}
         M_\tau(x):=\sum_{j=1}^mp_jdB_j(x)^*a_{\tau,j}(x)dB_j(x).
    \end{align*}
    It follows from the definition of $M_\tau$ that
    \begin{align*}
        \prod_{j=1}^mg_{x,\tau,j}\circ dB_j(x)(v)^{p_j}&=\prod_{j=1}^m\exp(p_j\langle a_{\tau,j}(x)dB_j(x)v,dB_j(x)v\rangle)\\
        &=\exp(-\pi\tau^{-2}|M_\tau(x)^{1/2}v|^2).
    \end{align*}
    Hence, by the fact that $\txtbf{a}_{\tau,j}(x)$ is a $\tau^{\alpha}$-near extremiser for $(\txtbf{dB}(x),\txtbf{p})$, 
    \begin{align*}
        (1-\tau^{\alpha})\BL(\txtbf{dB}(x),\txtbf{p})\leq\BLg(\txtbf{dB}(x),\txtbf{p};\txtbf{a}_{\tau,j}(x))=\det(M_\tau(x))^{-1/2}\leq\BL(\txtbf{dB}(x),\txtbf{p}),
    \end{align*}
    so
\begin{align*}
(1-\tau^{\alpha})\det(M_\tau(x))^{1/2}\tau^{-n}\exp\left(-\frac{\pi}{\tau^2}\vert M_\tau(x)^{1/2}v\vert^2\right)&\leq\frac{\prod_{j=1}^mg_{x,\tau,j}\circ dB_j(x)(v)^{p_j}}{\BL(\txtbf{dB}(x),\txtbf{p})}\\
&\leq\det(M_\tau(x))^{1/2}\tau^{-n}\exp\left(-\frac{\pi}{\tau^2}\vert M_\tau(x)^{1/2}v\vert^2\right).
\end{align*}
Taking logarithms of the ratio of the two quantities arising on either side of (\ref{eq:switch}) reveals that the logarithm of the error factor in \eqref{eq:switch} is polynomial in $\tau$.
\begin{align*}
&\log\left(\frac{\BL(\txtbf{dB}(x),\txtbf{p})\prod_{j=1}^mg_{y,\tau,j}\circ dB_j(y)(e_y^{-1}(x))^{p_j}}{\BL(\txtbf{dB}(y),\txtbf{p})\prod_{j=1}^mg_{x,\tau,j}\circ dB_j(x)(e_x^{-1}(y))^{p_j}}\right)\\
&\leq\log\left(\frac{\exp\left(-\pi\tau^{-2}\vert M_\tau(y)^{1/2}e_y^{-1}(x)\vert^2\right)\det(M_\tau(y))^{1/2}}{(1-\tau^{\alpha})\exp\left(-\pi\tau^{-2}\vert M_\tau(x)^{1/2}e_x^{-1}(y)\vert^2\right)\det(M_\tau(x))^{1/2}}\right)\\
&\leq\pi\tau^{-2}\left(|M_\tau(y)^{1/2}e_y^{-1}(x)|^2-|M_\tau(x)^{1/2}e_x^{-1}(y)|^2\right)+\log(\det(M_\tau(y)M_\tau(x)^{-1}))-\log(1-\tau^\alpha)\
\end{align*}
Let $\sigma:I\rightarrow M$ be a geodesic such that $\sigma(0)=x$ and $\sigma(1)=y$. Let $P_\sigma:T_xM\rightarrow T_yM$ denote parallel transport along $\sigma$. It is straightforward to check that $e_y^{-1}(x):=-P_\sigma e_x^{-1}(y)$, hence we may collate the two squares in the first term, allowing us to bound the resulting quantity using the mean value theorem.
\begin{align*}
&\log\left(\frac{\BL(\txtbf{dB}(x),\txtbf{p})\prod_{j=1}^mg_{y,\tau,j}\circ dB_j(y)(e_y^{-1}(x))^{p_j}}{\BL(\txtbf{dB}(y),\txtbf{p})\prod_{j=1}^mg_{x,\tau,j}\circ dB_j(x)(e_x^{-1}(y))^{p_j}}\right)\\
&\leq\pi\tau^{-2}\langle (P_\sigma^{-1} M_\tau(y)P_\sigma-M_\tau(x))e_x^{-1}(y),e_x^{-1}(y)\rangle+\log(\det(M_\tau(y)M_\tau(x)^{-1}))-\log(1-\tau^\alpha)\\
&\lesssim \tau^{\gamma-2}\Vert dM_\tau\Vert_{L^\infty}|e_x^{-1}(y)|^2+\tau^{\gamma-2\epsilon}+\tau^{\alpha}\lesssim\tau^{3\gamma-2-\epsilon}+\tau^{\gamma-2\epsilon}+\tau^{\alpha}\lesssim\tau^{\alpha}.
\end{align*}
Provided $\tau$ is taken to be sufficiently small, we then obtain the desired upper bound.
\begin{align*}
    \frac{\BL(\txtbf{dB}(x),\txtbf{p})\prod_{j=1}^mg_{y,\tau,j}\circ dB_j(y)(e_y^{-1}(x))^{p_j}}{\BL(\txtbf{dB}(y),\txtbf{p})\prod_{j=1}^mg_{x,j,\tau}\circ dB_j(x)(e_x^{-1}(y))^{p_j}}\leq\exp(c\tau^\alpha)\leq 1+\tau^{\eta},
\end{align*}
where $c\simeq 1$.
\end{proof}
The next lemma ensures that the we may perturb the operators $H_{x,\tau,j}$ in $x$ at the expense of a quantitatively small multiplicative error, and it will be a key tool not only for proving our theorem but also for proving Corollaries \ref{cor1} and \ref{cor2}. It shall become clear why it is essential that we truncate the gaussians $G_{x,\tau,j}$, as gaussians are not locally constant at any scale unless restricted to a ball of suitable size with respect to the scale of mollification, and the choice of $\tau^\gamma$ is well suited to our purposes. The reader should note that this lemma plays a similar role to Lemma 6.7 of \cite{bennett2020nonlinear}, although the way that local-constancy is expressed and the scales involved differ slightly between them.\par
In order to perturb to a nearby gaussian the radius of truncation needs to be slightly increased, and so we therefore shall need to define a minor modification of $H_{x,\tau,j}$, where the radius of the domain of integration is multiplied by a factor of 1.1. This factor is of course chosen arbitrarily, but since this consideration is a minor technicality we simply choose a value for the sake of concreteness.
\begin{align*}
    H_{x,\tau,j}^{1.1}&:L^1(M_j)\rightarrow L^1(U_{\rho-1.1\tau^{\gamma}}(B_j(x)))\\
    H_{x,\tau,j}^{1.1}f_j(z)&:=\int_{U_{1.1\tau^{\gamma},j}(z)}f_j(w)G_{x,\tau,j}(e_{B_j(x)}^{-1}(z)-e_{B_j(x)}^{-1}(w))dw
\end{align*}
\begin{lem}[Local-constancy]\label{loconst}
For any $\eta\in(0,\gamma-\epsilon)$, there exists a $\nu>0$ such that the following holds for all $\tau\in(0,\nu)$: Let $x\in M$, then given $z,\tilde{z}\in U_{\tau,j}(B_j(x))$ such that $d(z,\tilde{z})\lesssim\tau^2$ we have that for all $f_j\in L^1(M_j)$,
\begin{align}
    H_{x,\tau,j}f_j(z)\leq (1+\tau^{\eta})H^{1.1}_{x,\tau,j}f_j(\tilde{z})\label{eq:loconst}
\end{align}

\end{lem}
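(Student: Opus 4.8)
The plan is to compare the two integral kernels directly. Fix $x\in M$ and write $v := e_{B_j(x)}^{-1}(z)$, $\tilde v := e_{B_j(x)}^{-1}(\tilde z)$, both lying in the tangent space $T_{B_j(x)}M_j$; since the exponential map has bounded derivatives (Lemma \ref{geolem}) and $d(z,\tilde z)\lesssim\tau^2$, we have $|v-\tilde v|\lesssim\tau^2$. Both $H_{x,\tau,j}f_j(z)$ and $H^{1.1}_{x,\tau,j}f_j(\tilde z)$ are integrals of $f_j(w)$ against a translated, truncated copy of $G_{x,\tau,j}$; the only differences are (i) the centre of the gaussian factor is shifted by $v-\tilde v$, and (ii) the domain of integration is a ball of radius $\tau^\gamma$ around $z$ versus a ball of radius $1.1\,\tau^\gamma$ around $\tilde z$. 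The enlargement of the radius by a factor $1.1$ is precisely what lets the second domain contain the first: since $d(z,\tilde z)\lesssim\tau^2\ll 0.1\,\tau^\gamma$ (as $\gamma<1$), for $\tau$ small we have $U_{\tau^\gamma,j}(z)\subset U_{1.1\tau^\gamma,j}(\tilde z)$. Hence it suffices to prove the pointwise gaussian comparison
\begin{align*}
    G_{x,\tau,j}(v-e_{B_j(x)}^{-1}(w)) \leq (1+\tau^\eta)\,G_{x,\tau,j}(\tilde v-e_{B_j(x)}^{-1}(w))
\end{align*}
for all $w$ in the smaller ball, and then integrate against $f_j(w)\geq 0$ over $U_{\tau^\gamma,j}(z)$, enlarging the domain to $U_{1.1\tau^\gamma,j}(\tilde z)$ only at the end (which only increases the right-hand side).

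For the pointwise comparison, set $u := v-e_{B_j(x)}^{-1}(w)$ and $\tilde u := \tilde v - e_{B_j(x)}^{-1}(w) = u - (v-\tilde v)$, so $|u-\tilde u|\lesssim\tau^2$, and note $|u|\lesssim\tau^\gamma$ since $w\in U_{\tau^\gamma,j}(z)$ and the exponential map is bi-Lipschitz on the relevant ball (here one uses the convention $U_{r,j}$ carrying the extra $\|dB_j\|_{L^\infty}$ factor, plus Lemma \ref{geolem}). Since $A_{\tau,j}(x)$ is a fixed positive-definite matrix,
\begin{align*}
    \frac{G_{x,\tau,j}(u)}{G_{x,\tau,j}(\tilde u)} = \exp\!\Big(\pi\tau^{-2}\big(|A_{\tau,j}(x)^{1/2}\tilde u|^2 - |A_{\tau,j}(x)^{1/2}u|^2\big)\Big).
\end{align*}
Expanding the difference of squares and using $|u-\tilde u|\lesssim\tau^2$, $|u|,|\tilde u|\lesssim\tau^\gamma$, and the bound $|A_{\tau,j}(x)|\leq\tau^{-\epsilon}$ from Lemma \ref{abounds}, the exponent is $\lesssim\tau^{-2}\cdot\tau^{-\epsilon}\cdot\tau^2\cdot\tau^\gamma = \tau^{\gamma-\epsilon}$, so for $\tau$ small the ratio is at most $1+\tau^\eta$ for any $\eta<\gamma-\epsilon$. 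This is exactly the claimed range of $\eta$.

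The final step is bookkeeping: multiply the pointwise inequality by $f_j(w)\geq 0$, integrate over $w\in U_{\tau^\gamma,j}(z)$ to get $H_{x,\tau,j}f_j(z)\leq(1+\tau^\eta)\int_{U_{\tau^\gamma,j}(z)}f_j(w)G_{x,\tau,j}(\tilde v-e_{B_j(x)}^{-1}(w))\,dw$, then use $U_{\tau^\gamma,j}(z)\subset U_{1.1\tau^\gamma,j}(\tilde z)$ together with nonnegativity of the integrand to replace the domain by $U_{1.1\tau^\gamma,j}(\tilde z)$, yielding $(1+\tau^\eta)H^{1.1}_{x,\tau,j}f_j(\tilde z)$. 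I expect the only slightly delicate point to be the uniformity of all the implicit constants — in particular verifying that the inclusion of balls and the estimate $|u|\lesssim\tau^\gamma$ hold simultaneously for every $x\in M$, which is where the bounded-geometry hypothesis and Lemma \ref{geolem} do the real work; the gaussian estimate itself is routine once the right scales are in place.
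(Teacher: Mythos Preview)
Your proposal is correct and follows essentially the same route as the paper's proof: a pointwise comparison of the gaussian kernels via the difference-of-squares identity, using $|A_{\tau,j}(x)|\leq\tau^{-\epsilon}$, $|u-\tilde u|\lesssim\tau^2$ and $|u|\lesssim\tau^\gamma$ to bound the exponent by $\tau^{\gamma-\epsilon}$, followed by integration and enlargement of the domain. If anything, you are slightly more careful than the paper in making explicit the ball inclusion $U_{\tau^\gamma,j}(z)\subset U_{1.1\tau^\gamma,j}(\tilde z)$ and in invoking Lemma~\ref{geolem} for the bi-Lipschitz control of $e_{B_j(x)}^{-1}$.
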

\begin{proof}

First of all we need to prove a similar claim for the kernel $G_{x,\tau,j}$. Suppose that $v,w\in T_{B_j(x)}M_j$ are such that $\vert v-w\vert\leq\kappa\tau^2$ for some $\kappa\simeq 1$ and $v,w\in V_{\tau^\gamma,j}(0)$.
\begin{align*}
\frac{G_{x,\tau,j}(v)}{G_{x,\tau,j}(w)}&=\exp(\pi\tau^{-2}(\vert A_{\tau,j}(x)^{1/2} v\vert^2-\vert A_{\tau,j}(x)^{1/2} w\vert^2))\\
&=\exp(\pi\tau^{-2}\langle A_{\tau,j}(x)(v-w),v+w\rangle)\\
&\leq\exp(\pi\tau^{-2}\Vert A_{\tau,j}\Vert\vert v-w\vert\vert v+w\vert)\\
&\leq\exp(2C^2\kappa\pi\tau^{-2}\tau^{-\epsilon}\tau^2\tau^\gamma)\\
&=\exp(2C^2\kappa\pi\tau^{\gamma-\epsilon})
\end{align*}
Hence it follows that, for all $\tau>0$ sufficiently small, $d(z,\tilde{z})\lesssim\tau^2$, and $w\in U_{\tau,j}(z)$,
\begin{align*}
    G_{x,\tau,j}(e_{B_j(x)}^{-1}(z)-e_{B_j(x)}^{-1}(w))\leq(1+\tau^\eta)G_{x,\tau,j}(e_{B_j(x)}^{-1}(\tilde{z})-e_{B_j(x)}^{-1}(w)).
\end{align*}
The lemma then follows from applying this bound directly to the definition of $H^{1.1}_{x,\tau,j}f_j$.
\begin{align*}
    H_{x,\tau,j}f_j(z)&=\int_{U_{\tau^{\gamma},j}(z)}f_j(w)G_{x,\tau,j}(e_{B_j(x)}^{-1}(z)-e_{B_j(x)}^{-1}(w))dw\\
    &\leq (1+\tau^{\eta})\int_{U_{1.1\tau^{\gamma},j}(z)}f_j(w)G_{x,\tau,j}(e_{B_j(x)}^{-1}(\tilde{z})-e_{B_j(x)}^{-1}(w))dw\\
    &= (1+\tau^{\eta})H_{x,\tau,j}^{1.1}f_j(\tilde{z})
\end{align*}

\end{proof}


\section{Proof of Proposition \ref{timestep}}
Our proof strategy is to use the near-extremising gaussians $g_{x,\tau,j}$ to construct a partition of unity for the integral on the left-hand side of (\ref{eq:Nball1}), subordinate to balls of scale $\tau^\gamma$. At this scale, we may apply our lemmas from the previous section to perturb the integral, so that we may then apply the linear Brascamp--Lieb inequality locally, thereby obtaining the desired form on the right-hand side. Gaussian partitions of unity were also used in \cite{bennett2020nonlinear}, and, notably,  more recently in the context of decoupling for the parabola by Guth, Maldague, and Wang \cite{guth2020improved}.
\begin{proof}
For each $j\in\{1,...,m\}$, take some arbitrary $f_j\in L^1(M_j)$. Let $\eta\in(0,\min\{\alpha,0.9\gamma-\epsilon,\gamma-2\epsilon,3\gamma-2-\epsilon\})$ and choose $\nu>0$ such that \eqref{eq:lem2}, \eqref{eq:trunc}, \eqref{eq:dexp}, \eqref{eq:switch}, and \eqref{eq:loconst} hold for $\tau\in(0,\nu)$. Consider the following collection of truncated gaussians.
\begin{align}
    \left\lbrace\frac{\chi_{V_{0.1\tau^{\gamma}}(0)}\prod_{j=1}^mg_{y,\tau,j}\circ dB_j(y)^{p_j}}{\BL(\txtbf{dB}(y),\txtbf{p})}\right\rbrace_{y\in M}  \label{partofun}
\end{align}
By Lemma \ref{trunc} and the fact that $\txtbf{a}_{\tau}(y)$ is a $\tau^{\alpha}$-near extremiser for $(\txtbf{dB}(y),\txtbf{p})$, we know that for $\tau>0$ sufficiently small,
\begin{align*}
    \BL(\txtbf{dB}(y),\txtbf{p})\leq(1+\tau^\eta)\BLg(\txtbf{dB}(y),\txtbf{p};\txtbf{a}_{\tau}(y))&\leq(1+\tau^\eta)^2\int_{V_{0.1\tau^{\gamma}}(0)}\prod_{j=1}^mg_{y,\tau,j}\circ dB_j(y)(v)^{p_j}
\end{align*}
hence we may continuously split up the integral on the left-hand side of (\ref{eq:Nball1}) by introducing (\ref{partofun}) as one might a partition of unity.
\begin{align*}
    &\int_{U+U_{\tau^\gamma}(0)}\prod_{j=1}^mH_{y,\tau,j}f_j\circ B_j(y)^{p_j}dy\\
    &\leq(1+\tau^\eta)^2 \int_{U+U_{\tau^\gamma}(0)}\int_{V_{0.1\tau^{\gamma}}(y)}\prod_{j=1}^mH_{y,\tau,j}f_j\circ B_j(y)^{p_j}g_{y,\tau,j}\circ dB_j(y)(v)^{p_j}dv\frac{dy}{\BL(\txtbf{dB}(y),\txtbf{p})}\\
     &\leq(1+\tau^\eta)^3 \int_{U+U_{\tau^\gamma}(0)}\int_{ U_{0.1\tau^{\gamma}}(y)}\prod_{j=1}^mH_{y,\tau,j}f_j\circ B_j(y)^{p_j}g_{y,\tau,j}\circ dB_j(y)(e_{y}^{-1}(x))^{p_j}dx\frac{dy}{\BL(\txtbf{dB}(y),\txtbf{p})}\\
    &\leq(1+\tau^\eta)^3 \int_{U+U_{2^{1/2}\tau^\gamma}(0)}\int_{U_{0.1\tau^{\gamma}}(x)}\prod_{j=1}^mH_{y,\tau,j}f_j\circ B_j(y)^{p_j}g_{y,j,\tau}\circ dB_j(y)(e_{y}^{-1}(x))^{p_j}\frac{dy}{\BL(\txtbf{dB}(y),\txtbf{p})}dx
\end{align*}
We want to perturb the inner integral to a linear Brascamp--Lieb inequality in $y$. To do this, we first apply Lemma \ref{switchH} and Lemma \ref{switch} to remove some of the unwanted $y$-dependence. Let $P:=\sum_{j=1}^mp_j$, then
\begin{align*}
  &\int_{U+U_\tau(0)}\prod_{j=1}^mH_{y,\tau,j}f_j\circ B_j(y)^{p_j}dy\\
    &\leq (1+\tau^\eta)^{3+P} \int_{U+U_\tau(0)}\int_{U_{0.1\tau^{\gamma}}(x)}\prod_{j=1}^mH_{x,\tau,j}f_j\circ B_j(y)^{p_j}g_{y,\tau,j}\circ dB[y](e_{y}^{-1}(x))^{p_j}\frac{dy}{\BL(\txtbf{dB}(y),\txtbf{p})}dx\\
    &\leq (1+\tau^\eta)^{3+2P} \int_{U+U_{2^{1/2}\tau^\gamma}(0)}\int_{U_{0.1\tau^{\gamma}}(x)}\prod_{j=1}^mH_{x,\tau,j}f_j\circ B_j(y)^{p_j}g_{x,\tau,j}\circ dB[x](e_{x}^{-1}(y))^{p_j}dy\frac{dx}{\BL(\txtbf{dB}(x),\txtbf{p})}\\
    &\leq (1+\tau^\eta)^{3+3P} \int_{U+U_{2^{1/2}\tau^\gamma}(0)}\int_{V_{0.1\tau^{\gamma}}(x)}\prod_{j=1}^mH_{x,\tau,j}f_j\circ B_j(e_x(v))^{p_j}g_{x,\tau,j}\circ dB[x](v)^{p_j}dv\frac{dx}{\BL(\txtbf{dB}(x),\txtbf{p})}.
\end{align*}
We may then use Lemma \ref{loconst} to replace the instance of $B_j(e_x(v))$ with its affine approximation around $x$, given by $L^x_j(v):=e_{B_j(x)}(dB_j(x)v)$.
\begin{align*}
    &\leq (1+\tau^{\eta})^{3+4P}\int_{U+U_{2^{1/2}\tau^\gamma}(0)}\int_{V_{0.1\tau^{\gamma}}(x)}\prod_{j=1}^mH_{x,\tau,j}^{1.1}f_j\circ L^x_j(v)^{p_j}g_{x,\tau,j}\circ dB_j(x)(v)^{p_j}dv\frac{dx}{\BL(\txtbf{dB}(x),\txtbf{p})}\\
    &\leq (1+\tau^{\eta})^{3+4P}\int_{U+U_{2^{1/2}\tau^\gamma}(0)}\int_{T_xM}\prod_{j=1}^mH_{x,\tau,j}^{1.1}f_j\circ L^x_j(v)^{p_j}g_{x,\tau,j}\chi_{V_{0.1\tau^{\gamma},j}(0)}\circ dB_j(x)(v)^{p_j}dv\frac{dx}{\BL(\txtbf{dB}(x),\txtbf{p})}.
\end{align*}
Above we used the fact that, for all $x\in M$  $V_{0.1\tau^{\gamma}}(0)\subset\bigcap_{j=1}^mdB_j(x)^{-1}V_{0.1\tau^{\gamma},j}(0)$. At this point we may apply the linear Brascamp--Lieb inequality  $(\txtbf{dB}(x),\txtbf{p})$ to the inner integral.
\begin{align}
    &\leq(1+\tau^{\eta})^{3+4P}\int_{U+U_{2^{1/2}\tau^\gamma}}\prod_{j=1}^m\left(\int_{V_{0.1\tau^{\gamma},j}(0)}H^{1.1}_{x,\tau,j}f_j(e_{B_j(x)}(v_j))g_{x,\tau,j}(v_j)dv_j\right)^{p_j}dx\label{eq:penult}
\end{align}
The resulting integrals in (\ref{eq:penult}) may be then be bounded by a convolution.
\begin{align}
    \int_{V_{0.1\tau^{\gamma},j}(0)}&H^{1.1}_{x,\tau,j}f_j(e_{B_j(x)}(v_j))g_{x,\tau,j}(v_j)dv_j\nonumber\\
    &=\int_{V_{0.1\tau^{\gamma},j}(0)}\int_{U_{1.1\tau^{\gamma},j}(B_j(x))}f_j(z)G_{x,\tau,j}(v_j-e_{B_j(x)}^{-1}(z))g_{x,\tau,j}(v_j)dzdv_j\nonumber\\
    &\leq(1+\tau^\eta)\int_{V_{0.1\tau^{\gamma},j}(0)}\int_{V_{1.1\tau^{\gamma},j}(B_j(x))}f_j\circ e_{B_j(x)}(w)G_{x,\tau,j}(v_j-w)g_{x,\tau,j}(v_j)dwdv_j\nonumber\\
    &=G_{x,\tau,j}\chi_{V_{1.1\tau^{\gamma}}(0)}\ast g_{x,\tau,j}\chi_{V_{0.1\tau^{\gamma}}(0)}\ast f_j\circ e_{B_j(x)}(0)\label{eq:conv}
\end{align}
Now, $G_{x,\tau,j}\ast g_{x,\tau,j}= G_{x,2^{1/2}\tau,j}$ by definition of $G_{x,\tau,j}$, and the support of $\chi_{V_{1.1\tau^{\gamma}}(0)}\ast\chi_{V_{0.1\tau^{\gamma}}(0)}$ is the ball around the origin of radius $1.2\tau^{\gamma}$, which is less than $2^{\gamma/2}\tau^{\gamma}$ provided that $\gamma\geq 2\log_2(1.2)\approx 0.526...$. This implies that $\supp(G_{x,\tau,j}\chi_{V_{1.1\tau^{\gamma}}(0)}\ast g_{x,\tau,j}\chi_{V_{0.1\tau^{\gamma}}(0)})\subset V_{2^{\gamma/2}\tau^{\gamma}}(0)$, hence 
\begin{align*}
    G_{x,\tau,j}\chi_{V_{1.1\tau^{\gamma}}(0)}\ast g_{x,\tau,j}\chi_{V_{0.1\tau^{\gamma}}(0)}\leq(G_{x,\tau,j}\ast g_{x,\tau,j})\chi_{V_{2^{\gamma}\tau^{\gamma}}(0)}=G_{x,2^{1/2}\tau,j}\chi_{V_{2^{\gamma/2}\tau^{\gamma}}(0)}
\end{align*}
We may then bound (\ref{eq:conv}) as follows:
\begin{align}
    \int_{V_{0.1\tau^{\gamma},j}(0)}H^{1.1}_{x,\tau,j}f_j(e_{B_j(x)}(v_j))g_{x,\tau,j}(v_j)dv_j\leq H_{x,2^{1/2}\tau,j}f_j\circ B_j(x).\label{eq:fin}
\end{align}
Finally, we complete the proof by combining (\ref{eq:penult}) with (\ref{eq:fin}) and taking $\beta\in(0,\eta)$.
\end{proof}
\section{Proof of Corollaries \ref{cor1} and \ref{cor2}}
\begin{proof}[Proof of Corollary \ref{cor1}.]
Take some arbitrary $f_j\in L^1(M_j)$ for all $j\in\{1,...,m\}$. By Theorem \ref{NBall1}, there exists a $\beta>0$ such that for $\tau>0$ sufficiently small
\begin{align*}
    \int_{U_{\tau^\gamma}(x_0)}\prod_{j=1}^mf_j\circ B_j(x)^{p_j}dx
    &\leq (1+\tau^{\beta})\int_{U_{2\tau^\gamma}(x_0)}\prod_{j=1}^m H_{x,\tau,j}f_j\circ B_j(x)^{p_j}dx
\end{align*}
Take $\eta$ and $\nu$ as in the proof of Proposition \ref{timestep}, if we take $\tau\in(0,\nu)$, then we may apply Lemma \ref{switchH} to perturb $H_{x,\tau,j}$ to $H_{x_0,\tau,j}$ and Lemma \ref{loconst} to perturb $B_j(x)$ to $L_j^{x_0}(x)$, at which point we may apply the linear inequality to complete the proof.
\begin{align*}
    &\leq(1+\tau^{\beta})(1+\tau^{\eta})^P\int_{U_{2\tau^\gamma}(x_0)}\prod_{j=1}^m H_{x_0,\tau,j}f_j\circ B_j(x)^{p_j}dx\\
    &\leq(1+\tau^{\beta})(1+\tau^{\eta})^{2P}\int_{U_{2\tau^\gamma}(x_0)}\prod_{j=1}^m H_{x_0,\tau,j}^{1.1}f_j\circ L^{x_0}_j(x)^{p_j}dx\\
    &\leq(1+\tau^{\beta})(1+\tau^{\eta})^{2P}\BL(\txtbf{dB}(x_0),\txtbf{p})\prod_{j=1}^m\left(\int_{U_{2\tau^{\gamma},j}(0)}H_{x_0,\tau,j}^{1.1}f_j\circ e_{B_j(x)}\right)^{p_j}\\
    &\leq(1+\tau^{\beta})(1+\tau^{\eta})^{3P}\BL(\txtbf{dB}(x_0),\txtbf{p})\prod_{j=1}^m\left(\int_{M_j}f_j\right)^{p_j},
\end{align*}
where $P:=\sum_{j=1}p_j$, as in the previous section.
\end{proof}
\begin{proof}[Proof of Corollary \ref{cor2}.]

Fix some $\tau>0$ small enough so that (\ref{eq:Nball1}) holds for the nonlinear datum $(\txtbf{B},\txtbf{p})$. For all $x\in M$, since $d_{M_j}(B_j(x),\widetilde{B}_j(x))\leq\rho$, $e_{B_j(x)}^{-1}(\widetilde{B}_j(x))$ is well-defined. We may then consider the following ratio for all $v\in V_{\tau^\gamma,j}(0)\subset T_{B_j(x)}M_j$.
\begin{align}
    \frac{G_{x,\tau,j}(v)}{G_{x,\tau,j}(e_{B_j(x)}^{-1}(\widetilde{B}_j(x))-v)}&=\exp\left(\pi\tau^{-2}(|A_{x,\tau,j}^{1/2}(e_{B_j(x)}^{-1}(\widetilde{B}_j(x))-v)|^2-|A_{x,\tau,j}^{1/2}v|^2)\right)\nonumber\\
    &=\exp\left(\pi\tau^{-2}\langle A_{x,\tau,j}(e_{B_j(x)}^{-1}(\widetilde{B}_j(x))-2v),e_{B_j(x)}^{-1}(\widetilde{B}_j(x))\rangle \right)\nonumber\\
    &\leq\exp\left(\pi\tau^{-2}\Vert A_{x,\tau,j}\Vert|e_{B_j(x)}^{-1}(\widetilde{B}_j(x))|(|e_{B_j(x)}^{-1}(\widetilde{B}_j(x))|+2|v|)\right)\nonumber
\end{align}
Because $|e_{B_j(x)}^{-1}(\widetilde{B}_j(x))|= d_{M_j}(B_j(x),\widetilde{B}_j(x))\leq \rho$, this then implies that
\begin{align}
    \frac{G_{x,\tau,j}(v)}{G_{x,\tau,j}(e_{B_j(x)}^{-1}(\widetilde{B}_j(x))-v)}\lesssim_{\rho,\tau}1.\label{eq:cor2pf}
\end{align}
Define the following convolution operator:
\begin{align}
    H_{\tau,j}f_j(y):=\tau^{-n_j}\int_{U_{\tau^\gamma,j}(y)}f_j(z)\exp(-\pi\tau^{\epsilon-2}|e_{B_j(x)}^{-1}(z)|^2)dz
\end{align}
By Lemma \ref{abounds}, $|A_{x,\tau,j}z|\gtrsim\tau^{\epsilon}|z|$ for all $z\in T_{B_j(x)}M_j$ and all $x\in M$. Combining this with \eqref{eq:cor2pf}, we obtain the bound $H_{x,\tau,j}f_j\circ B_j(x)\lesssim_{\rho,\tau}H_{\tau,j}f_j\circ\widetilde{B}_j(x)$.
\begin{align*}
    H_{x,\tau,j}f_j\circ B_j(x)&=\tau^{-n_j}\int_{U_{\tau^\gamma,j}(B_j(x))}f_j(z)G_{x,\tau,j}(e_{B_j(x)}^{-1}(z))dz\\
    &\lesssim_{\rho,\tau}\int_{U_{\tau^\gamma,j}(B_j(x))}f_j(z)G_{x,\tau,j}(\widetilde{B}_j(x)-e_{B_j(x)}^{-1}(z))dz\\
    &\lesssim_{\tau,\epsilon}\int_{U_{\tau^\gamma,j}(y)}f_j(z)\exp(-\pi\tau^{\epsilon-2}|\widetilde{B}_j(x)-e_{B_j(x)}^{-1}(z)|^2)dz\\
    &=H_{\tau,j}f_j\circ\widetilde{B}_j(x)
\end{align*}
The finiteness of constant associated with $(\txtbf{B},\txtbf{p})$ then follows quickly from \eqref{eq:Nball1} and the finiteness of the constant associated with $(\widetilde{\txtbf{B}},\txtbf{p})$.
\begin{align*}
  \int_M\prod_{j=1}^mf_j\circ B_j(x)^{p_j}dx&\leq (1+\tau^{\beta})\int_M\prod_{j=1}^m H_{x,\tau,j}f_j\circ B_j(x)^{p_j}dx\\
 &\lesssim_{\rho,\tau,\epsilon}\int_M\prod_{j=1}^m H_{\tau,j}f_j\circ \widetilde{B}_j(x)^{p_j}dx\\
  &\lesssim_{\widetilde{\txtbf{B}}}\prod_{j=1}^m\left(\int_{M_j}H_{\tau,j}f_j\right)^{p_j}\\
  &\lesssim_{\tau,\epsilon}\prod_{j=1}^m\left(\int_{M_j}f_j\right)^{p_j}
\end{align*}
\end{proof}
\section{Appendix: a Geometric Lemma}
Here we establish Lemma \ref{geolem}, which asserts that our uniform boundedness assumptions from Section \ref{setup section} imply good uniform control of the first and second order derivatives of the exponential map. The proof uses some standard ideas from the analysis of ODEs, and while the result itself is not new, its proof is included here for the sake of completeness.
\begin{proof}
We should first clarify that, in this proof, double bars shall always denote $L^\infty$ norms. We first prove the case for derivatives of order $1$. Let $p\in M$ and $X,Y\in T_p M$, with $|X|,|Y|< \rho$. We consider the following vector field $J(t):(0,\infty)\rightarrow TM$ defined over the curve parametrised by $\gamma(t):=\exp(tX)$:
$$J(t):=\partial_s\exp_p(t(X+sY))\vert_{s=0}.$$
By definition of the exponential map, $J$ is a Jacobi field with initial data $J(0):=0$ and $J'(0)=Y$, hence it satisfies the Jacobi equation:
\begin{align}
    J''+R(J,\gamma')\gamma'=0 \label{eq: jacobi}
\end{align}
Here $R$ denotes the Riemannian curvature endomorphism. Now, define the following quantity $F(t):=|J(t)|^2+|J'(t)|^2$. We shall aim to bound this quantity via bounding its derivative  using \eqref{eq: jacobi} and the AM-GM inequality.
\begin{align*}
    F'&=2\langle J,J'\rangle + 2\langle J',J''\rangle\\
    &=2(\langle J,J'\rangle + 2\langle J',R(J,\gamma')\gamma'\rangle)\\
    &\leq 2(|J||J'|+|J'|\Vert R\Vert |J||X|^2)\\
    &\leq (1+\Vert R\Vert\rho^2)F
\end{align*}
Hence $F(t)\leq e^{t(1+\Vert R\Vert\rho^2)}F(0)$, and so $$|d\exp_p(X)Y|=J(1)\leq F(1)^{1/2}\leq e^{(1+\Vert R\Vert\rho^2)/2}F(0)^{1/2}=e^{(1+\Vert R\Vert\rho^2)/2}|Y|.$$
We then bootstrap to the second order case via a similar method. Let $Z\in T_pM$, $|Z|<\rho$, and consider the following family of variations of $J$:
$$J_\epsilon(t):=\partial_s\exp_p(t(X+sY+\epsilon Z))\vert_{s=0}$$
Each such $J_\epsilon$ is a Jacobi field for all $\epsilon>0$, so we may then differentiate \eqref{eq: jacobi} in $\epsilon$ to find that
\begin{align*}
    \partial_\epsilon J_\epsilon''+\partial_JR(J_\epsilon(t),\gamma')(\gamma',\partial_\epsilon J_\epsilon)=0, \forall t,\epsilon>0,
\end{align*}
where $\partial_J R$ refers to the partial covariant derivative of the Riemannian curvature tensor in the first argument. We now consider the quantity $G(t):=|\partial_\epsilon J_0(t)|^2+|\partial_\epsilon J_0'(t)|^2$, and apply a similar argument to last time
\begin{align*}
    G'&=2\langle \partial_\epsilon J_0,\partial_\epsilon J_0'\rangle + 2\langle \partial_\epsilon J_0',\partial_\epsilon J_0''\rangle\\
    &=2(\langle \partial_\epsilon J_0,\partial_\epsilon J_0'\rangle + 2\langle J_0',\partial_JR(J_0,\gamma')(\gamma',\partial_\epsilon J_0)\rangle\\
    &\leq 2(|\partial_\epsilon J_0||\partial_\epsilon J_0'|+|\partial_\epsilon J_0'|\Vert \partial_J R\Vert |J_0||\partial_\epsilon J_0||X|^2)\\
    &\leq (1+\Vert \partial_\epsilon R\Vert |J_0|^2\rho^2)G
\end{align*}
Hence $G(t)\leq e^{t(1+\Vert \partial_J R\Vert \rho^2(\sup_{0<l<t}|J_0|^2(l)))}G(0)\leq e^{t(1+\Vert \partial_J R\Vert \rho^2e^{t(1+\Vert R\Vert\rho^2)}|Y|)}G(0)$, therefore,
\begin{align*}
    |d^2\exp(X)(Y,Z)|=\partial_\epsilon J_0(1)\leq G(1)^{1/2}\leq e^{t(1+\Vert \partial_J R\Vert\rho^2 e^{t(1+\Vert R\Vert \rho^2)})/2}G(0)^{1/2}=e^{(1+\Vert \partial_J R\Vert \rho^2e^{(1+\Vert R\Vert\rho^2)/2})/2}|Z|
\end{align*}
By symmetry, we also have that $|d^2\exp(X)(Y,Z)|\leq e^{(1+\Vert \partial_J  R\Vert e^{t(1+\Vert R\Vert)/2})/2}|Y|$, so we are done.
\end{proof}
\bibliographystyle{plain}
\bibliography{Bib}
\end{document}